%% 
%    Feb 1, 2020:  A new proof (divergence theorem) of the area bound in the wedge lemma.
%
%
%% Paco submitted this to JDG Sep 26 or 27, 2019. Rejected Oct 2, 2020.
%% Paco is posting this on ArXiV Thur Sept 19.
%%
%% I am submitting it (by email to Colding) on Oct 3, 2020.
%%
%
%    This version (Dec 29, 2021) incorporates the suggestions of the Crelle referee.
%%

% This file uses the following graphics files: 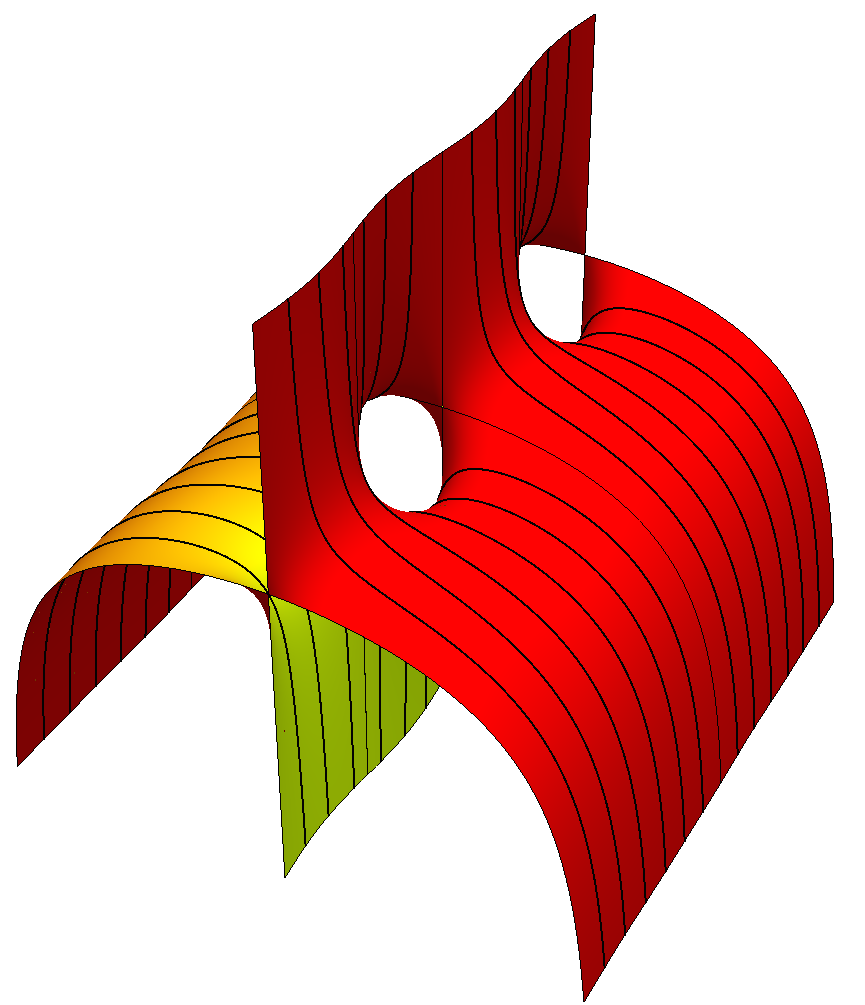, 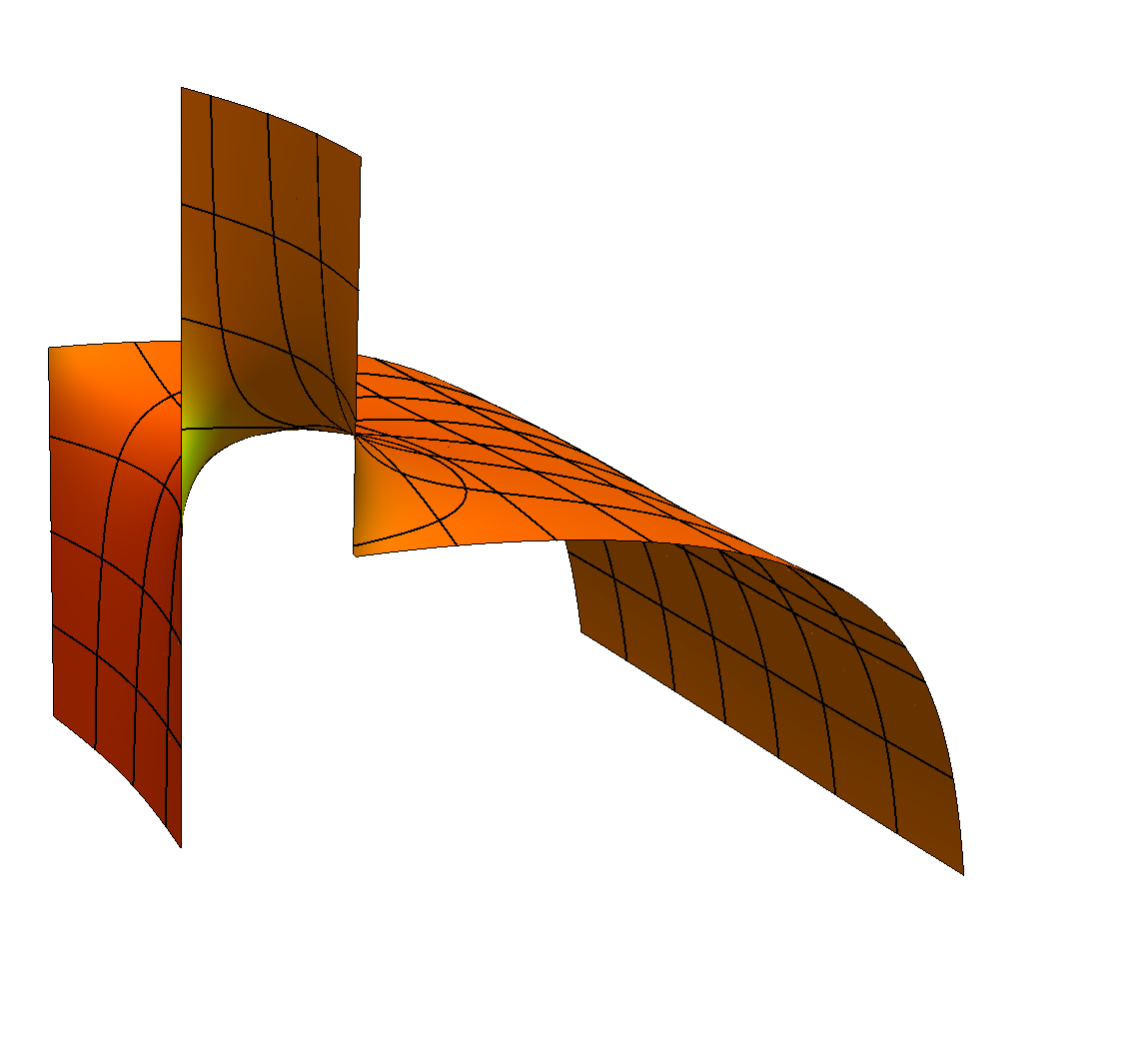, 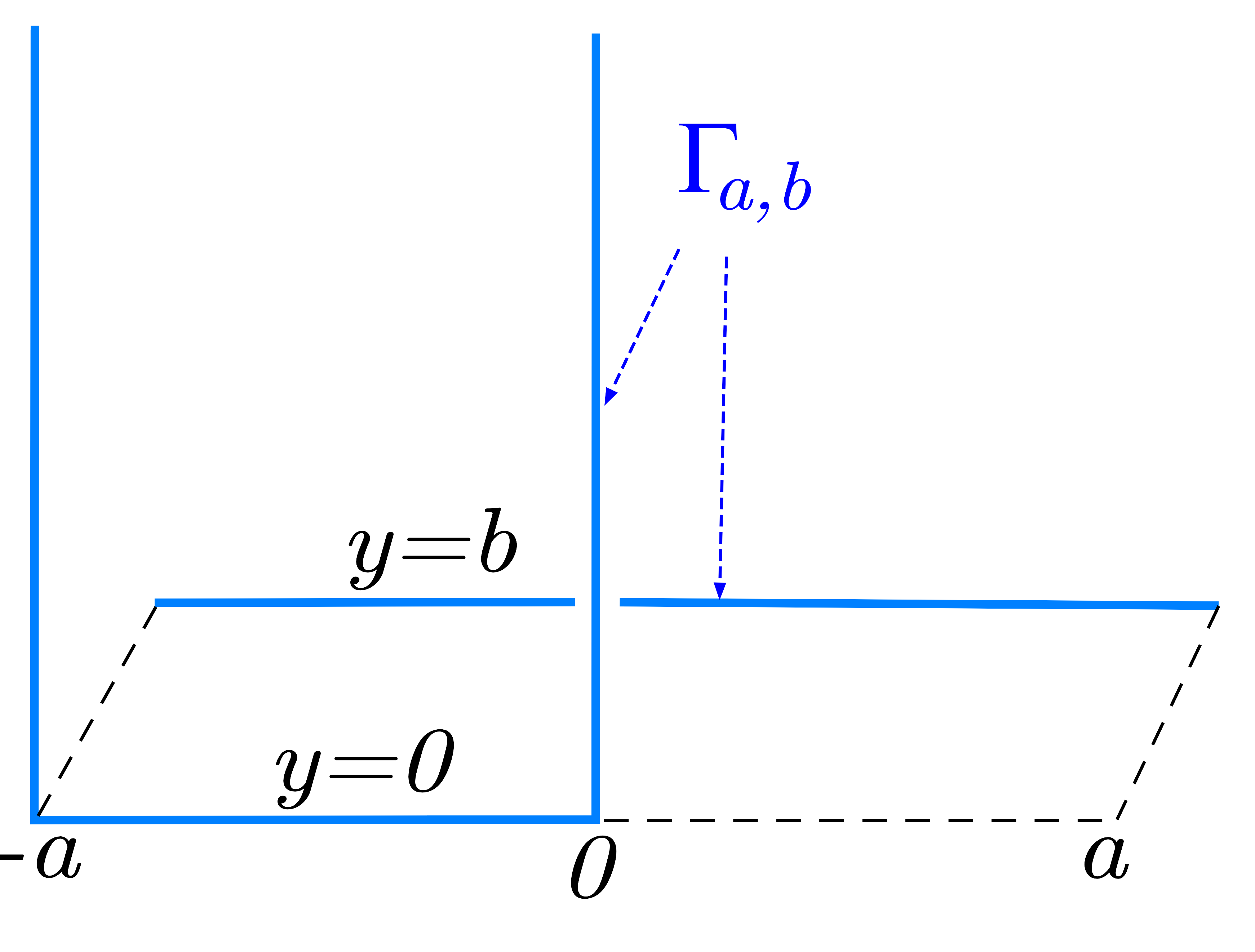, 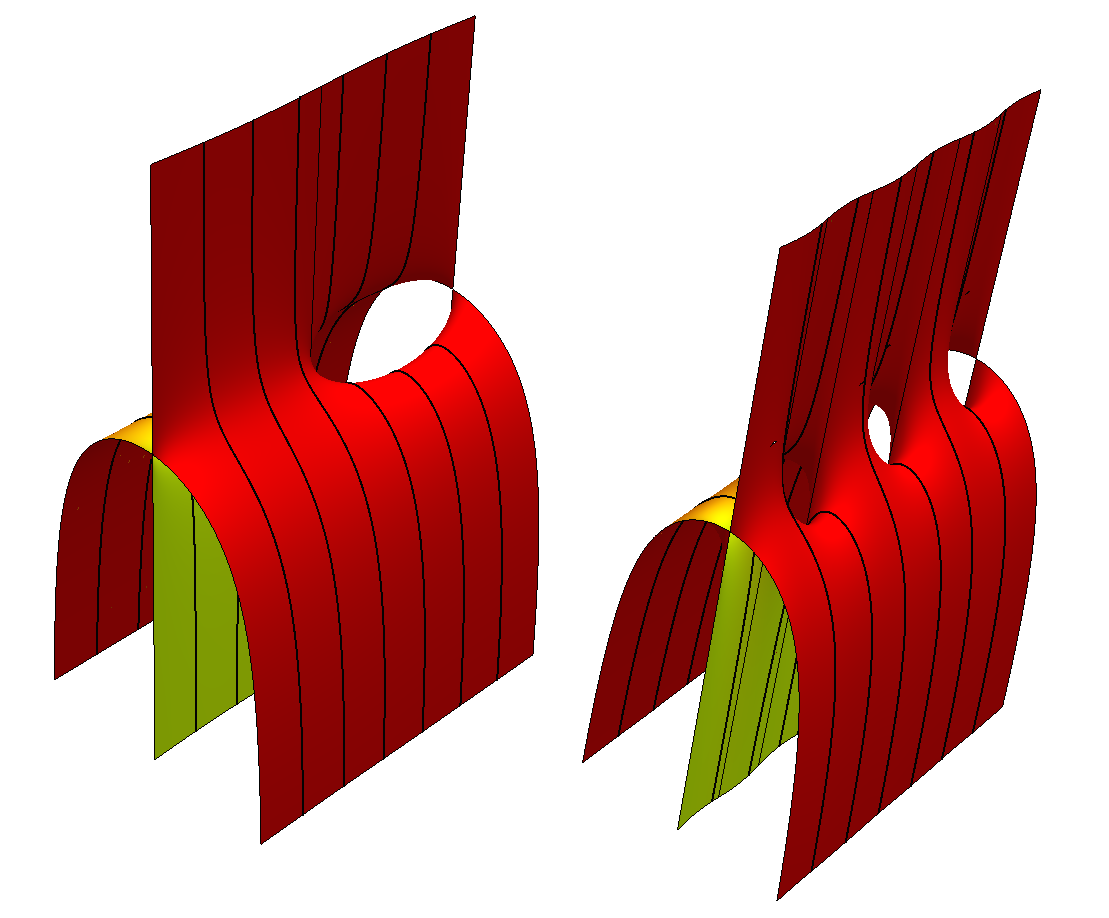, 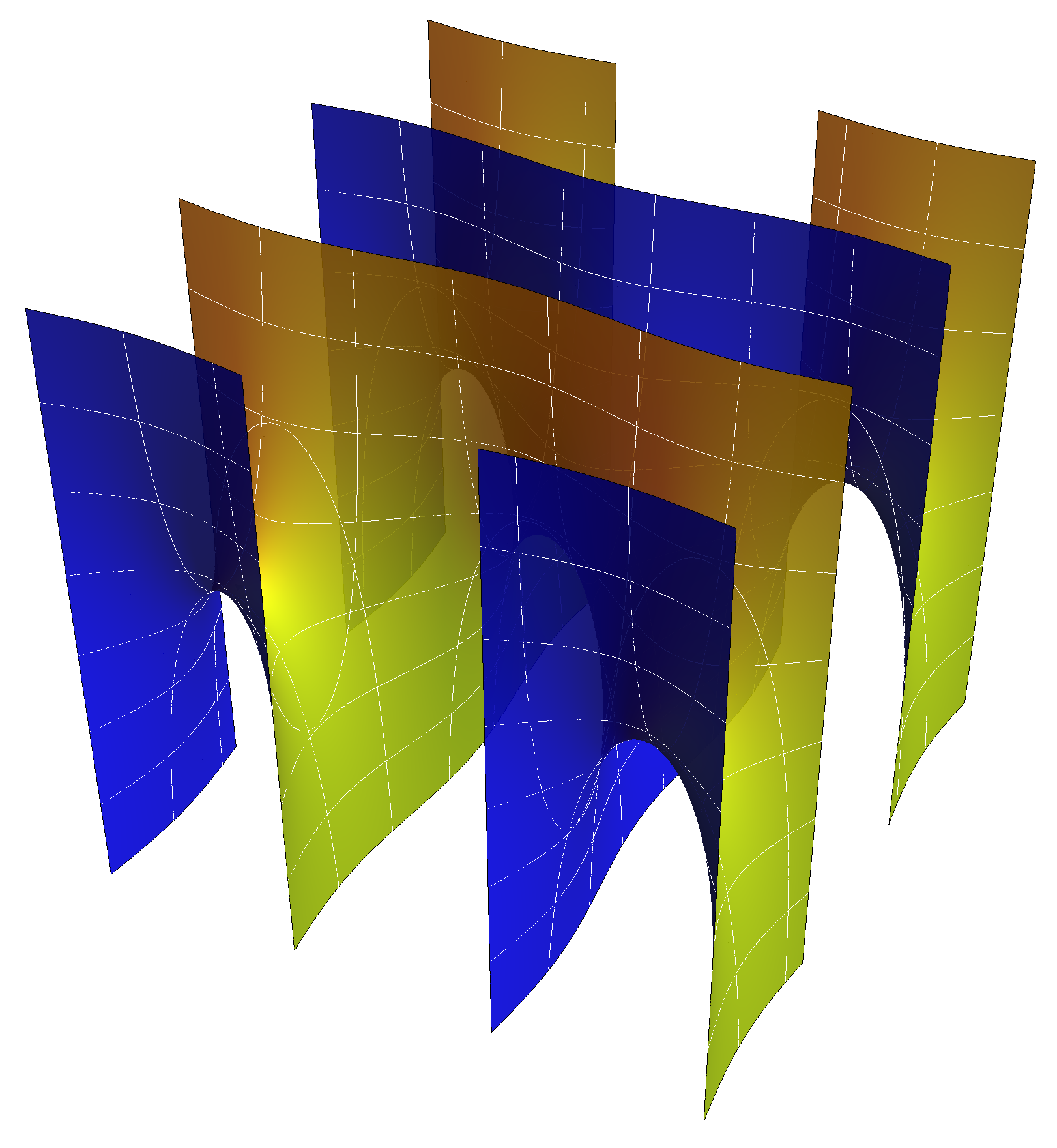, 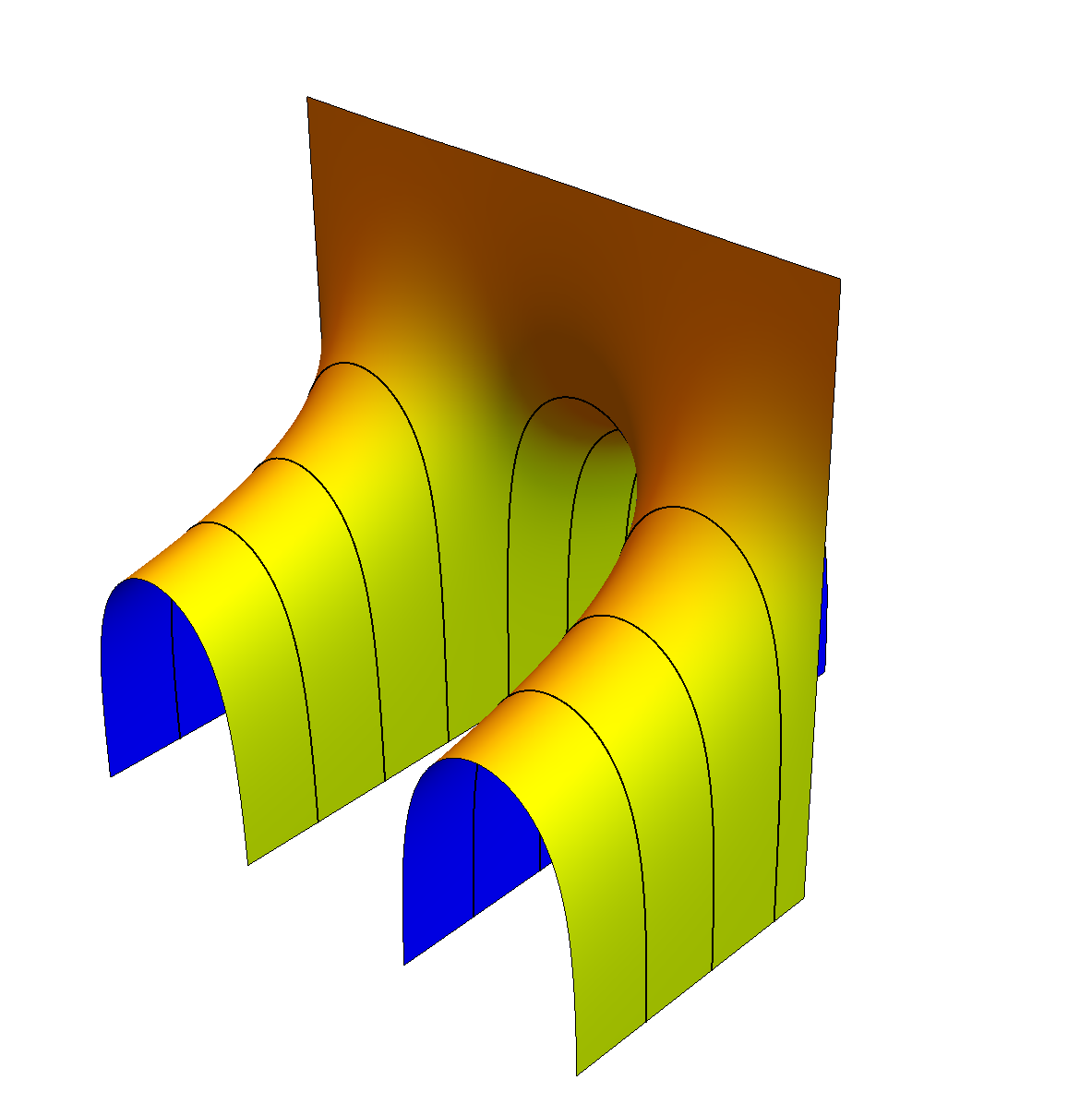, 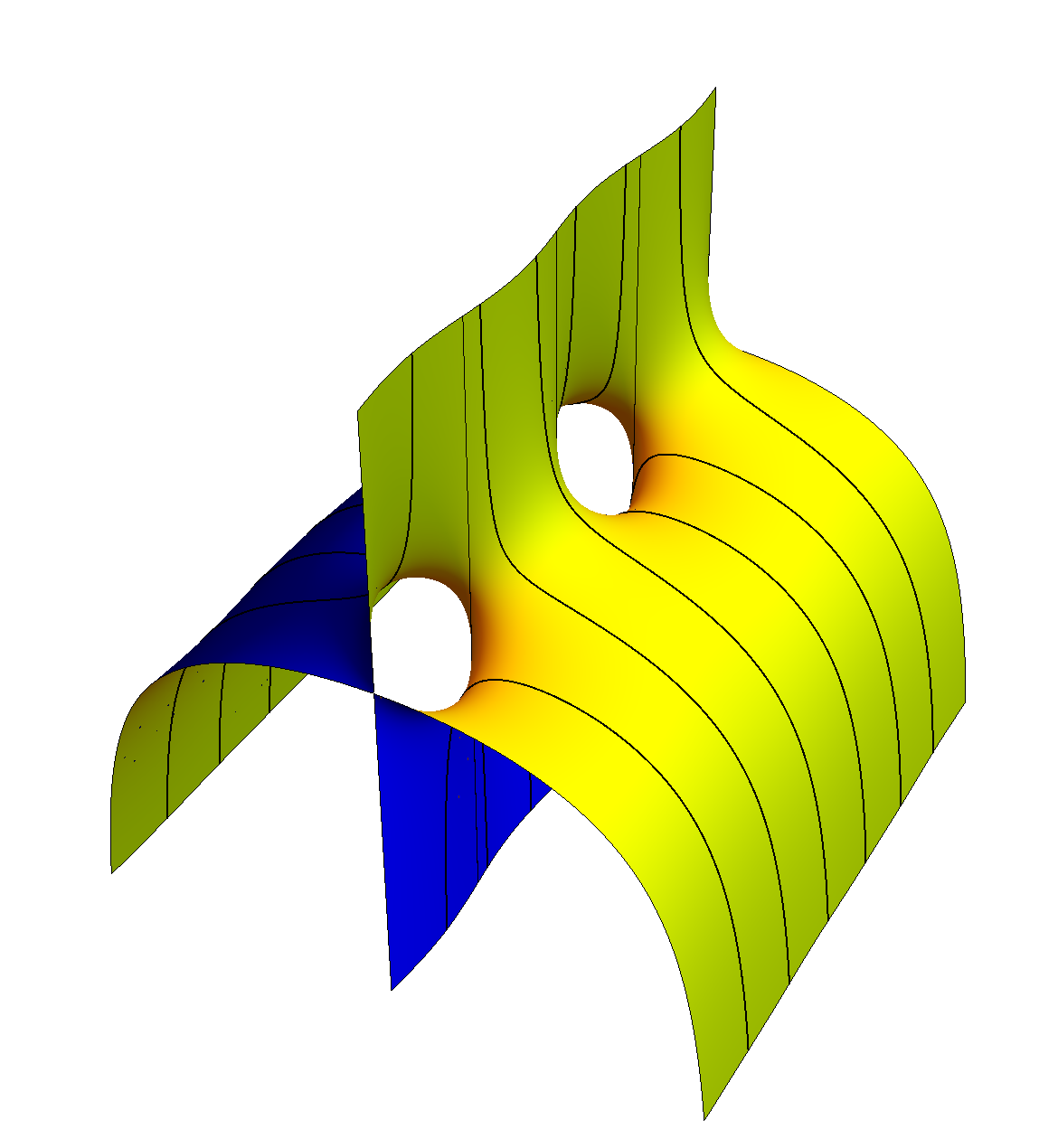, 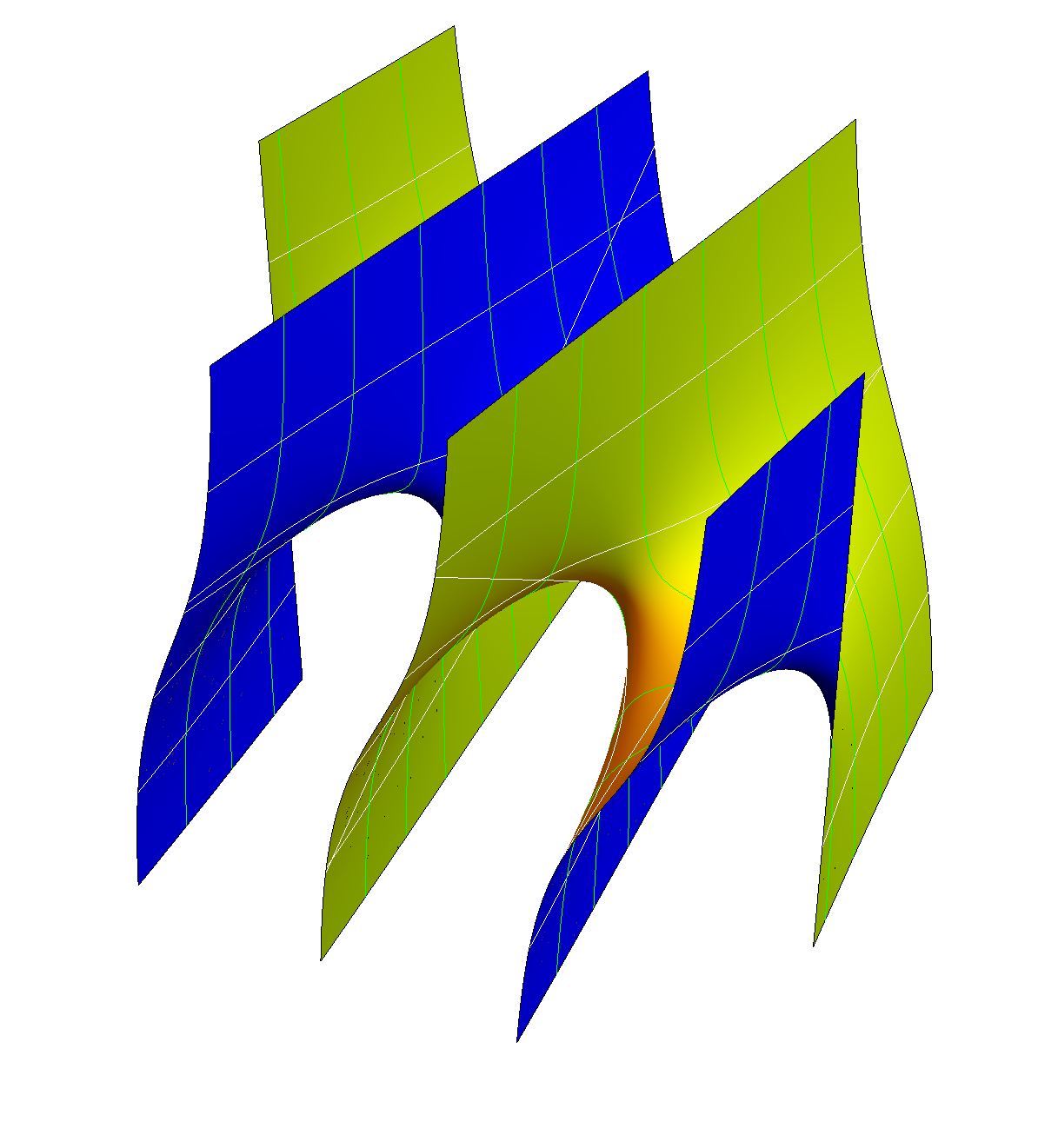, 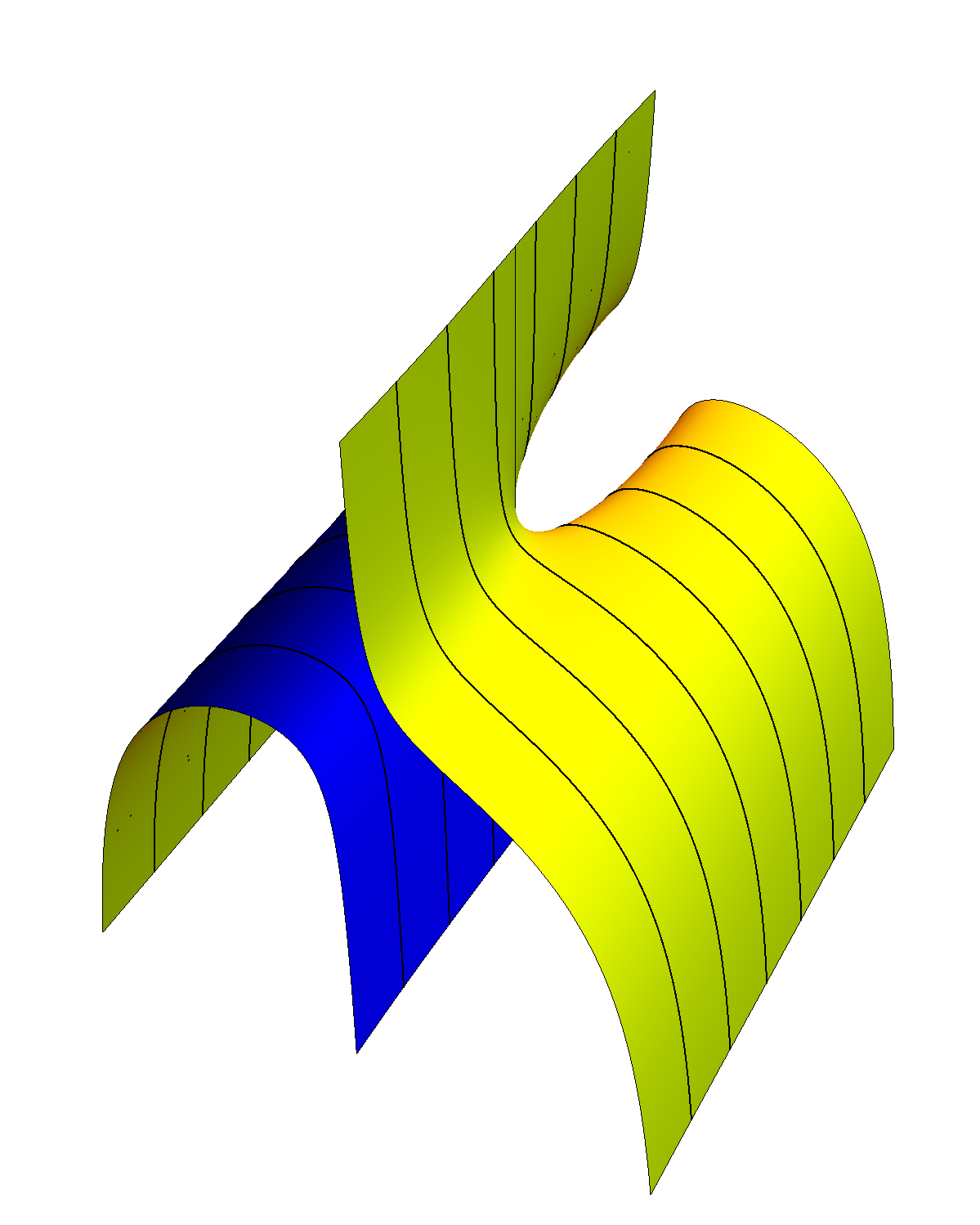

\documentclass{amsart}
\usepackage{chngcntr}
\usepackage{apptools}
\usepackage{color}
\usepackage{amsthm,amssymb,verbatim}
\usepackage{graphicx}

\usepackage{enumerate}
%\usepackage{showlabels}
% genus-one helicoids from a variational point of view
%\usepackage{enumitem}
\usepackage[latin1]{inputenc} %this package allows to use Spanish accents, like in Martín.
\usepackage{graphicx}

\newcommand{\pdf}[2]{\frac{\partial #1}{\partial #2}}

\newcommand{\nn}{\eta}

 %space of annuli

\newcommand{\Rr}{\mathcal R}

 \newcommand{\Gg}{\mathcal{G}}
 \newcommand{\Nn}{\mathcal{N}}

  \newcommand{\Ss}{\mathbf{S}}
 
 \newcommand{\RR}{\mathbf{R}}  % reals
 \newcommand{\ZZ}{\mathbf{Z}}  % integers
 \newcommand{\BB}{\mathbf{B}}  %ball
    %hyperbolic space
    %cylinder
   %circle, sphere
   %complex numbers
    
  \newcommand{\Div}{\operatorname{Div}}
  
    \newcommand{\dist}{\operatorname{dist}}
 
 \newcommand{\area}{\operatorname{area}}
 \newcommand{\eps}{\epsilon}
 \newcommand{\Tan}{\operatorname{Tan}}

\newcommand{\vv}{\mathbf v}
\newcommand{\ee}{\mathbf e}

\newcommand{\D}{{\rm D}}

\newcommand{\graph}{\textrm{graph}}
\newcommand{\spt}{\operatorname{spt}}

\usepackage{amsthm}

\input epsf
\def\begfig {
\begin{figure}
\small }
\def\endfig {
\normalsize
\end{figure}
}

%\swapnumbers
    \newtheorem{theorem}    {Theorem}   %    [section]
    \newtheorem{lemma}      [theorem]       {Lemma}
    \newtheorem{corollary}  [theorem]     {Corollary}
    \newtheorem{proposition}       [theorem]       {Proposition}
    
    \newtheorem{claim}{Claim}
    \newtheorem*{theorem*}{Theorem}
    \theoremstyle{definition}
    \newtheorem{definition}  [theorem] {Definition}
     \newtheorem{conjecture}  [theorem] {Conjecture}
    \theoremstyle{definition}
    \newtheorem{remark}   [theorem]       {Remark}

%\renewcommand{\thesubsection}{\thetheorem}{\thedefinition}
   % this is so subsections and theorems, etc will be
   % numbered together.

\subjclass[2010]{53A10 (primary), and 49Q05, 53C42 (secondary)} 
\usepackage{hyperref}
\usepackage[alphabetic, msc-links, backrefs]{amsrefs}
%
% IMPORTANT: These two usepackage commands MUST occur
% in this order at the very end of the preamble.
%

\begin{document}

%\maketitle

%\begin{abstract}
%\end{abstract}

\title[Tridents]{Nguyen's Tridents and 
the Classification of Semigraphical Translators for Mean Curvature Flow}

\begin{abstract}
We construct a one-parameter family of singly periodic translating solutions to mean
curvature flow that converge as the period tends to $0$ to the union of a grim reaper surface
and a plane that bisects it lengthwise.   The surfaces are semigraphical: they are properly embedded,
and, after removing a discrete 
collection of vertical lines, they are graphs.  
We also provide a nearly complete classification of semigraphical translators.
\end{abstract}

\author[D. Hoffman]{\textsc{D. Hoffman}}

\address{David Hoffman\newline
Stanford University Department of Mathematics \newline
% Stanford University \newline 
  Stanford, CA 94305, USA\newline
{\sl E-mail address:} {\bf dhoffman@stanford.edu}
}

\author[F. Martin]{\textsc{F. Martín}}

\address{Francisco Martín\newline
Departmento de Geometría y Topología. IMAG \newline
Universidad de Granada\newline
18071 Granada, Spain\newline
{\sl E-mail address:} {\bf fmartin@ugr.es}
}
\author[B. White]{\textsc{B. White}}

\address{Brian White\newline
Stanford University Department of Mathematics \newline
% Stanford University \newline 
  Stanford, CA 94305, USA\newline
{\sl E-mail address:} {\bf bcwhite@stanford.edu}
}

\date{September 23, 2019.  Revised December 29, 2021}
\subjclass[2010]{Primary 53C44, 53C21, 53C42}
\keywords{Mean curvature flow, translating solitons, tridents.}
\thanks{The second author was partially supported by the MCIN/AEI/10.13039/501100011033/  grant no. PID2020-116126-I00 and  by the Regional Government of Andalusia and ERDEF grant PY20-01391. The third author was partially supported by NSF grant DMS-1711293}

\maketitle

\section{Introduction}

A {\bf translator} in $\RR^3$ is a smooth surface $M$ such that $t\mapsto M-t\ee_3$
is a mean curvature flow, or, equivalently, such that the mean curvature vector at each point of $M$ is 
given by $(-\ee_3)^\perp$. Ilmanen~\cite{ilmanen} observed that $M\subset\RR^3$ is a translator
if and only if $M$ is a minimal surface 
\begin{comment}
for the weighted area functional 
\[
\operatorname{area}(M)=\operatorname{area}_z (M)= \int_M e^{-z}\, ds,
\]
which is area
\end{comment}
 with respect to the {\bf translator metric} $g_{ij}(x,y,z)=e^{-z}\delta_{ij}$.
(See~\cite{himw-survey} for an expository survey of some recent results about translators.)
In this paper, we prove the following theorem:

\begin{theorem}\label{main-theorem}
For every $a>0$, there is a unique translator $M_a$ with the following properties:
\begin{enumerate}
\item\label{smooth-item} $M_a$ is a smooth, properly embedded surface in $\RR^3$.
\item\label{vertical-line-item} For each integer $n$, $M$ contains the vertical line $\{(na,0)\}\times\RR$.
\item\label{periodic-item} $M_a$ is periodic with period $(2a,0,0)$.
\item\label{graph-item} $M_a\cap\{y>0\}$ is the graph of a function $u_a$ defined
on some strip $\RR\times (0,b)$, with boundary values given by
\begin{align*}
u_a(x,0) &= -\infty \quad\text{for $-a<x<0$}, \\
u_a(x,0) &= +\infty \quad\text{for $0<x<a$}, \\
u(x,b) &= -\infty \quad\text{for all $x$}.
\end{align*}
\item\label{origin-item} $M_a$ is tangent to the $yz$-plane at the origin.
\end{enumerate}
If $M'$ is any other translator
  with properties~\eqref{smooth-item}--\eqref{graph-item}, then $M'$ is a vertical translate of $M_a$.

Furthermore, the width $b=b(a)$ of the strip in~\eqref{graph-item}
is a continuous, increasing function 
taking values in $(\pi/2, \pi)$ and tending to $\pi/2$ as $a\to 0$ and to $\pi$ as $a\to\infty$.
The surface $M_a$ depends smoothly on $a$.
As $a\to 0$, $M_a$ converges smoothly away from the $x$-axis $X$ to the union of the $xz$-plane
and the grim reaper surface $\{(x,y,z): \text{$z = \log(\cos y)$ and $|y|<\pi/2$}\}$.
\end{theorem}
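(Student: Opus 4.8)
The plan is to build the open half $M_a\cap\{y>0\}$ as a translator graph over a strip $\RR\times(0,b)$ by solving a Jenkins--Serrin--type boundary value problem for the translator equation
\[
\ddiv\!\Big(\frac{Du}{\sqrt{1+|Du|^2}}\Big)\;=\;-\,\frac{1}{\sqrt{1+|Du|^2}},
\]
to recover the vertical lines of item~\eqref{vertical-line-item} and the full surface $M_a$ by Schwarz reflection across them, and then to extract the remaining assertions --- the width function $b(a)$, uniqueness up to vertical translation, smooth dependence on $a$, and the $a\to0$ limit --- by comparison (maximum principle), flux, compactness, and implicit-function-theorem arguments. The first ingredient is a barrier toolkit: vertical planes $\{y=\mathrm{const}\}$ are translators, and so are the grim reaper surfaces $\{z=c+\log\cos(y-y_0)\}$ over $\{|y-y_0|<\pi/2\}$, their reflections, and the tilted grim reapers (graphs over strips of width $\ge\pi$). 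The decisive use is a grim reaper with one singular line placed at $\{y=b\}$, whose other singular line is then at $\{y=b-\pi\}$: sliding it vertically pinches any candidate solution near the top edge, forcing the boundary value $-\infty$ there, and --- since the second singular line must avoid the open strip, and $b=\pi$ would force $u\to-\infty$ at $\{y=0\}$ as well, contradicting the prescribed $+\infty$ there --- forcing $b<\pi$. Planes and grim reapers placed near $\{y=0\}$ play the analogous role near the bottom edge and near the trident vertices.

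Using the symmetry $x\mapsto a-x$ to reduce to the fundamental rectangle $R=(-a/2,a/2)\times(0,b)$ --- with $M_a$ meeting the planes $\{x=\pm a/2\}$ orthogonally, hence a free-boundary condition on the two vertical edges --- I would solve, for each large $N$, the translator equation on $R$ with continuous data $+N$ on $(0,a/2)\times\{0\}$, $-N$ on $(-a/2,0)\times\{0\}$ and on $(-a/2,a/2)\times\{b\}$, interpolated over a short arc around the distinguished point $(0,0)$, and with the free-boundary condition on the vertical edges. Existence for fixed $N$ is standard: the equation is quasilinear and uniformly elliptic once $|Du|$ is bounded, boundary gradient estimates come from local barriers, and the continuity/Leray--Schauder method applies. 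Interior and boundary estimates for translator graphs then yield, after passing to a subsequence as $N\to\infty$, a translator graph $u_{a,b}$ on $R$, smooth up to the free boundary and up to the edges away from $(0,0)$, whose boundary values on the bottom and top edges are $\pm\infty$ as prescribed --- the infinite values being forced by the barriers above. Reflecting in $x$ and extending periodically gives a translator graph on all of $\RR\times(0,b)$; near each vertex $(na,0)$, where $+\infty$ meets $-\infty$, the closure of its graph contains the full vertical line $\{(na,0)\}\times\RR$, exactly as at the corners of Scherk's surface. Those vertical lines are geodesics of the translator metric $e^{-z}\delta_{ij}$, and $\pi$-rotation about a vertical axis is an isometry of that metric, so Schwarz reflection across them produces a smooth, properly embedded translator $M_{a,b}$ satisfying \eqref{smooth-item}--\eqref{graph-item}.

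It remains to see that a valid $M_{a,b}$ exists for exactly one $b=b(a)$, to establish the properties of $b(\cdot)$, and to prove uniqueness and the limits. Uniqueness up to vertical translation --- among all surfaces with \eqref{smooth-item}--\eqref{graph-item} --- I would prove by a sliding (Alexandrov-type) argument: restrict two such surfaces to their $y>0$ graphs, translate one vertically above the other, slide back, and invoke the strong maximum principle and the boundary point lemma for the translator equation in the open strip, at the free-boundary edges, and near the trident vertices (where one compares both surfaces with the common asymptotic grim reaper model). Coincidence of the two graphs forces equality of the strip widths, so $b$ is determined by $a$; the normalization \eqref{origin-item} then picks the vertical translate and completes the first half of the theorem. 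The bound $b<\pi$ is the barrier argument above; the lower bound $b>\pi/2$, the monotonicity and continuity of $b(\cdot)$, and the limits $b(0^+)=\pi/2$, $b(\infty)=\pi$ come from a flux balance for the translator equation over one period of the strip together with the identification of the limit surfaces: as $a\to0$ the graphs converge (smoothly away from the $x$-axis) to the bisected grim reaper $\{y=0\}\cup\{z=\log\cos y\}$, the dense family of vertical lines $\{(na,0)\}\times\RR$ filling in the plane $\{y=0\}$, and as $a\to\infty$ to the full grim reaper. Smooth dependence on $a$ follows from the implicit function theorem once the linearized translator (Jacobi) operator is shown to be invertible on the appropriate space of symmetric, suitably decaying functions --- that is, once nontrivial Jacobi fields respecting the symmetries are ruled out.

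I expect the genuinely non-routine points to be (i) the analysis at the trident vertices: showing the limit graph is smooth there, that its closure contains the \emph{entire} vertical line rather than merely a ray or a segment, and that no curvature concentrates there as $N\to\infty$; I would handle this by comparison with the $yz$-plane and grim reaper models together with a removable-singularity/monotonicity argument. And (ii) controlling $b(a)$ --- its exact range, its monotonicity, and its endpoint limits --- together with the identification of the $a\to0$ limit as a multiplicity-one lamination; this rests on the sliding/uniqueness argument, a careful flux computation over a period, and a compactness and regularity analysis for the degenerating family near the $x$-axis, and I expect it to be the hardest part.
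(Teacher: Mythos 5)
Your overall strategy --- construct the $y>0$ half as a translator graph by a Jenkins--Serrin-type boundary value problem with truncated data $\pm N$, pass $N\to\infty$, and Schwarz-reflect across the vertical lines --- is genuinely different from the paper's. The paper instead works in the flat quotient $K_{a,b}\times[0,\infty)$ with the translator metric and sets up an \emph{area-minimization} problem with fixed boundary $\Gamma_{a,b}$: it defines $\alpha(a,b)=\inf\area$, shows $\alpha(a,b)/a$ is monotone in each variable, defines $b(a)$ as the threshold where $\alpha(a,b)$ first equals $3a=\area(N_{a,b}\times[0,\infty))$, and extracts $M_a$ as a limit of minimizers for $b_i\uparrow b(a)$ after shifting down by heights $z_i\to\infty$. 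Your route is closer in spirit to Nguyen's original construction and could in principle work, but as written it has a genuine gap at the central point.

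The gap is the determination of $b(a)$. You solve, for each $(a,b)$ and each $N$, a well-posed truncated problem on the fundamental rectangle, and then assert that the $N\to\infty$ limit (normalized by the tangency at the origin) is a translator graph over the full strip with the prescribed $\pm\infty$ boundary values, ``forced by the barriers.'' For generic $b$ this is false. The barriers do force $u^N$ to blow up near the edges, but the renormalized limit need not be a trident: for $b\ne b(a)$ it degenerates to vertical planes or to a grim reaper, and the trident exists over a strip of width $b(a)$ \emph{only}. Identifying that width is the heart of the theorem, and the flux balance you invoke does not do it: integrating the divergence form of the translator equation over one period gives only
\[
\int_P \frac{dA}{\sqrt{1+|Du|^2}} \;=\; 2a \;<\; 2ab,
\]
i.e.\ $b>1$ --- a necessary inequality that neither singles out $b(a)$ nor yields the monotonicity and range $(\pi/2,\pi)$. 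In the paper, monotonicity of $b(\cdot)$ drops out of the variational setup (the stretching maps $K_{a,b}\to K_{a',b}$ show $\alpha(a,b)/a$ is decreasing in $a$ and increasing in $b$), and the sharp bounds $\pi/2<b(a)<\pi$ are then proved by separate comparison and flux arguments (Theorems~\ref{sharp} and~\ref{upper-sharp}). Your uniqueness-by-sliding argument is sound in spirit and does show that at most one width works, but uniqueness cannot substitute for a mechanism that produces existence at the right $b$; your proposal does not supply one. A secondary, smaller gap: the $a\to0$ analysis needs a uniform curvature estimate $|A(M_a,p)|\min\{1,\dist(p,X)\}\le C$ independent of $a$, which the paper obtains by a blow-up argument combining the Gauss-map image of the vertical lines with a Bernstein-type theorem; without something of this kind, smooth subconvergence away from the $x$-axis is not automatic and the multiplicity-one identification of the limit does not come for free.
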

\begin{figure}[htbp]
\begin{center}
\includegraphics[height=.3\textheight]{}
\end{center}
\caption{The surface $M_1$ .} \label{fig:one}
\end{figure}

The behavior of $M_a$ as $a\to\infty$ is described in~\S\ref{to-infinity-section}.

We do not know whether the function $b(\cdot)$ is strictly increasing.

We remark that the periodicity in Property~\eqref{periodic-item} follows from Property~\eqref{vertical-line-item}
since any translator must 
be  invariant under rotation through $\pi$ about each vertical line that it contains.
(This rotational invariance of the translator is an instance of the Schwarz reflection principle for minimal surfaces, since the translator metric is rotationally invariant about vertical lines.  In particular, the surface and the rotated surface are tangent along the entire line and therefore must coincide, since distinct minimal surfaces in $3$-manifolds can have at most isolated points of tangency.)
Also, from the uniqueness of $M_a$, we see that $M_a$ must be invariant under reflection in the plane $x=a/2$
(or, more generally, in the plane $x=ka/2$ if $k$ is an odd integer.)

For all sufficiently small $a>0$, Xuan Hien Nguyen~\cite{nguyen-tridents} used desingularization methods
 to prove  existence of translators with
 properties~\eqref{smooth-item}--\eqref{origin-item}.  She called such surfaces {\bf tridents}.
We were led to Theorem~\ref{main-theorem} by a desire to understand tridents
  from a variational point of view.
According to the uniqueness assertion in Theorem~\ref{main-theorem}, our examples coincide with hers in the range
of $a$'s for which she proves existence.
\begin{figure}[htbp]
\begin{center}
\includegraphics[height=.25\textheight]{}
\end{center}
\caption{A fundamental piece of the surface $M_a$. It is a graph over the domain $(-a,a) \times (0,b(a))$.
The whole surface is obtained by successive Schwarz reflections about the vertical lines.} \label{fig:fund}
\end{figure}

Each trident is an example of a {\bf semigraphical translator}, i.e., a properly embedded 
translator $M$ (without boundary) that contains
a discrete, nonempty collection $L$ of vertical lines such that $M\setminus L$ is a graph.
In~\cite{scherkon}, we constructed other families of semigraphical translators.   In~\S\ref{semigraphical-section}, 
we almost classify
all semigraphical translators. In particular, we show that every semigraphical translator, with
the possible exception of one type that we believe cannot occur,
is either a trident or one of the examples in~\cite{scherkon}.

(All graphical translators were classified in~\cite{himw}.)

Although tridents have finite entropy (the entropy is $3$), they have infinite genus and thus
cannot arise as blowups of an initially smooth mean curvature flow.
However, any subsequential limit of tridents as $a\to \infty$ has finite entropy
and finite topology (the topology of a disk).  Whether it could occur as a blowup is an interesting
open problem.

The organization of the paper is as follows.  In~\S\ref{existence-section}, we prove existence of the examples $M_a$
and that $b(a)$ is an increasing function of $a$.
In~\S\ref{uniqueness-section}, we prove uniqueness.  
In~\S\ref{property-section}, we prove a geometric property of tridents.
In~\S\ref{dependence-section},
we prove that $M_a$ depends smoothly on $a$ and that the width $b(a)$ is a continuous
function of $a$.
In~\S\ref{to-zero-section} and~\S\ref{to-infinity-section}, we analyze the behavior of $M_a$ as $a$ tends
to $0$ and to infinity.
In~\S\ref{widths-section}, we show that the set $\{b(a):a>0\}$ of widths of the various examples
is the open interval $(\pi/2,\pi)$.
In~\S\ref{semigraphical-section}, we discuss the classification of semigraphical translators.

\section{Existence}\label{existence-section}

\newcommand{\length}{\operatorname{length}}

\begin{lemma}\label{prelim-lemma}
Suppose that $K$ is a $2$-manifold with Riemannian metric $\gamma$.
Endow $K\times\RR$ with the translator metric
\[
     g(p,z)=e^{-z}(\gamma(p) +\,dz^2).
\]
\begin{enumerate}[\upshape (1)]
\item\label{area-identity-item}
If $M$ is a surface of finite area in $K\times\RR$ such that
$M$ is minimal or such that all the tangent planes to $M$ are vertical, then
\[
  \area(M) = \int_{\partial M}  \vv \cdot \nn \,ds,
\]
where $\vv = -\partial/\partial z$ and $\nn$ is the unit vector tangent to $M$ and normal to $\partial M$
that points out from $M$.
\item\label{second-prelim-item}
Suppose that $C$ is the union of one or more curves in $K$, and that
 $M$ is a minimal surface in $K\times[0,\infty)$ of finite area with $\partial M = \partial (C\times[0,\infty))$.
Then
\[
    \area(M)\le \area(C\times [0,\infty)) = \length(C),
\]
with equality if and only if $M$ is tangent to $C\times [0,\infty)$ along $C\times \{0\}$.
If $C$ is a union of geodesics, then equality holds if and only if $M=C\times[0,\infty]$.
\end{enumerate}
\end{lemma}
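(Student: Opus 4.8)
The key analytic input is the first variation / divergence identity for the translator metric. Write the translator metric on $K\times\RR$ as $g = e^{-z}(\gamma + dz^2)$, and recall Ilmanen's observation that $g$-minimal surfaces are exactly translators. For part~\eqref{area-identity-item}, the plan is to compute the $g$-divergence of the vector field $\vv = -\partial/\partial z$ on $M$. Since $\vv = \nabla z$ up to the conformal factor, a direct computation (using that the conformal weight is $e^{-z}$ and that $M$ is two-dimensional) gives $\ddiv_M \vv = -\Hh_M\cdot\vv + (\text{a pointwise term that vanishes})$; concretely, for the translator metric the relevant identity is that the $g$-area element restricted to $M$ can be written as an exact form. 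The cleanest route: parametrize and observe that $e^{-z}\,dA_\gamma$-type computations show $\area_g(M) = \int_M \ddiv_M(\vv)\,dA_g$ whenever either $M$ is $g$-minimal (so the mean-curvature term drops) or all tangent planes of $M$ are vertical (so $\vv$ is everywhere tangent to $M$ and the term $\Hh\cdot\vv$ vanishes because $\vv$ lies in the tangent plane, hence is orthogonal to $\Hh$). Then apply the divergence theorem on $M$ to get $\area_g(M) = \int_{\partial M}\vv\cdot\nn\,ds$. I would first do the vertical-tangent-plane case, where the computation is essentially the statement that $\CY$-type surfaces have area equal to $\length$ times height and the identity is elementary, and then treat the minimal case by the same Stokes argument once the divergence identity is in hand.

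For part~\eqref{second-prelim-item}, the strategy is a calibration-type comparison. Apply part~\eqref{area-identity-item} to the competitor $C\times[0,\infty)$, whose tangent planes are all vertical: this gives $\area(C\times[0,\infty)) = \int_{\partial(C\times[0,\infty))}\vv\cdot\nn\,ds = \length(C)$, since $\vv\cdot\nn = 1$ along the bottom boundary $C\times\{0\}$ (where $\nn = -\partial/\partial z = \vv$) and the contribution at $z=\infty$ vanishes because the metric decays like $e^{-z}$. Now apply part~\eqref{area-identity-item} to the minimal surface $M$: since $\partial M = \partial(C\times[0,\infty))$, the boundary integral $\int_{\partial M}\vv\cdot\nn\,ds$ is taken over the same curve $C\times\{0\}$ (plus the part at infinity, which again contributes nothing by the finite-area and exponential-decay hypotheses), but now $\nn$ is the outward $M$-conormal rather than $\vv$. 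Since $\vv$ and $\nn$ are both unit vectors, $\vv\cdot\nn \le 1$ pointwise, with equality exactly where $\nn = \vv$, i.e.\ where $M$ is tangent to the vertical cylinder $C\times[0,\infty)$ along $C\times\{0\}$. Integrating gives $\area(M) \le \length(C) = \area(C\times[0,\infty))$ with the stated equality condition. If $C$ is a union of geodesics, then $C\times[0,\infty)$ is itself $g$-minimal (a vertical cylinder over a $\gamma$-geodesic is a translator), so equality case reduces to two minimal surfaces with the same boundary that are tangent along that boundary; by the strong unique continuation / Hopf-type argument for minimal surfaces (or simply because tangency along $C\times\{0\}$ forces agreement of first-order data and then real-analyticity of minimal surfaces forces $M = C\times[0,\infty]$), we conclude $M = C\times[0,\infty]$.

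The main obstacle is making the divergence identity in part~\eqref{area-identity-item} precise and handling the behavior at $z=\infty$ rigorously: one must justify that the boundary term at infinity genuinely vanishes, which requires the finite-area hypothesis together with the exponential decay $e^{-z}$ of the metric — a short but essential estimate showing that for a finite-$g$-area minimal surface the slices $M\cap\{z=T\}$ have $g$-length tending to $0$ along a sequence $T\to\infty$. I would handle this by a coarea/Fubini argument: $\int_0^\infty (\text{$g$-length of }M\cap\{z=t\})\,dt \le \area_g(M) < \infty$ forces the integrand to have a subsequence tending to $0$, and then pass to the limit in the Stokes formula along that subsequence. The pointwise computation $\ddiv_M\vv$ and the inequality $\vv\cdot\nn\le 1$ are then routine.
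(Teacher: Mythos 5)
Your outline is essentially the paper's argument: part (1) is the first variation of $g$-area in the direction $\vv$ (the paper evaluates $\int_M \ddiv_M \vv = \area(M)$ by observing that vertical translation by $t$ scales $g$-area by $e^t$, rather than by a pointwise conformal computation; the clean pointwise statement, if you want it, is $\ddiv_M\vv\equiv 1$ on every surface, not the somewhat garbled identity you wrote), and part (2) is the same comparison of boundary integrals with $\vv\cdot\nn\le 1$ on $C\times\{0\}$, plus unique continuation when $C$ is geodesic. However, two of your steps need repair as written.

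First, when $C$ has endpoints, $\partial\bigl(C\times[0,\infty)\bigr)$ is not just $C\times\{0\}$: it also contains the vertical rays $(\partial C)\times[0,\infty)$, which are genuine boundary components (in the paper's application the curve $\Gamma_{a,b}$ contains two such rays, and they matter). Their contribution to $\int_{\partial M}\vv\cdot\nn\,ds$ does \emph{not} vanish ``by finite area and exponential decay'': each ray has $g$-length $\int_0^\infty e^{-z/2}\,dz=2$ and $|\vv\cdot\nn|\le|\vv|_g=e^{-z/2}$, so decay alone only bounds the contribution by $\int_0^\infty e^{-z}\,dz=1$ per endpoint, which would wreck the sharp inequality $\area(M)\le\length(C)$ and the equality analysis. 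What makes these terms vanish is the exact pointwise identity $\vv\cdot\nn\equiv 0$ along the rays: the boundary tangent there is $\partial_z$, and $\nn$ is tangent to $M$ and $g$-orthogonal to the boundary tangent, hence orthogonal to $\vv$ (orthogonality in $g$ agrees with orthogonality in the product metric since $g$ is conformal to it). You need to say this; it is the one line in the paper's proof that your write-up is missing. Second, your coarea inequality is false as stated: $|\nabla^M z|_g$ can be as large as $e^{z/2}$ (it equals $e^{z/2}$ on vertical pieces), so $\int_0^\infty \length_g\bigl(M\cap\{z=t\}\bigr)\,dt\le\area_g(M)$ fails --- for $M=C\times[0,\infty)$ the left side is $2\length(C)$, the right side $\length(C)$. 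The correct consequence of the coarea formula is $\int_0^\infty e^{-t/2}\,\length_g\bigl(M\cap\{z=t\}\bigr)\,dt\le\area_g(M)<\infty$, and since $|\vv|_g=e^{-z/2}$ this still yields heights $t_i\to\infty$ along which the slice term in Stokes' theorem tends to $0$, so your limiting argument survives after inserting the weight. With these two corrections your proof coincides with the paper's.
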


\begin{proof}
Let 
\begin{align*}
&\phi_t: K\times\RR\to K\times \RR, \\
&\phi_t(p,z) = (p,z-t).
\end{align*}
Thus
\[  
\pdf{}t \phi_t(p,z) = \vv.
\]
Now
\[
   \area(\phi_t(M)) = e^t \area(M),
\]
so 
\[
  (d/dt)_{t=0}\area(\phi_t(M)) = \area(M).
\]
Thus by the first variation formula,
\[
 \area(M) = -\int_M H\cdot \vv + \int_{\partial M} \vv\cdot \nn\,ds,
\]
where $H$ is the mean curvature.  If $M$ is minimal or if
the tangent planes to $M$ are vertical, then $H\cdot \vv\equiv 0$, so
\begin{equation}\label{area-identity}
  \area(M)=  \int_{\partial M} \vv\cdot \nn\,ds.
\end{equation}
Thus we have proved Assertion~\eqref{area-identity-item}.

Now suppose $M$ is either $C\times [0,\infty)$ or a minimal surface
with boundary $\partial (C\times [0,\infty))$.  Then by~\eqref{area-identity},
\begin{align*}
\area(M)
&=
\int_{\partial M} \vv\cdot\nn\,ds \\
&=
\int_{C\times\{0\}} \vv\cdot\nn\,ds + \int_{(\partial C)\times [0,\infty)} \vv\cdot\nn\,ds \\
&=
\int_{C\times\{0\}} \vv\cdot\nn\,ds
\end{align*}
since $\vv\cdot \nn\equiv 0$ along $(\partial C)\times [0,\infty)$.

When $z=0$, $\vv$ is a unit vector so $\vv\cdot\nn\le 1$.  Thus
\[
\area(M) \le \length(C)
\]
with equality if and only if $M$ is tangent to $C\times [0,\infty)$ along $C\times\{0\}$.

If $C$ consists of geodesics, then $C\times [0,\infty)$ is minimal, so if $M$ and $C\times[0,\infty)$
are tangent along $C\times \{0\}$, then they coincide by unique continuation.
\end{proof}

Now we specialize as follows.  
Let  $K=K_{a,b}$ be the rectangle
\[
   [-a,a]\times [0,b] = \{(x,y): -a\le x \le a, \, 0\le y\le b\}
\]
with the right and left edges identified.
(Equivalently, $K$ is the quotient of the strip $\RR\times [0,b]$ under the equivalence $(x,y)\equiv (x+2a,y)$.)
Note that $y$ is well-defined on $K$, and that $x$ is well-defined except that it is two-valued
on the segment $\{x=a\}$ (which is the same as the segment $\{x=-a\}$.)

\begin{lemma}\label{geodesics-lemma}
Let $\Gg$ be the collection of embedded geodesics $\sigma$ in $K$ such that
$\partial \sigma\subset \{(0,0), (a,0)\}$.  Then $\Gg$ consists of
\begin{enumerate}
\item the segment $\{(x,0): 0< x < a\}$,
\item the segment $\{(x,0): -a < x < 0\}$, and
\item the closed geodesics $\{(x,y): y=\beta\}$ where $0\le \beta\le b$.
\end{enumerate}
Thus if $\Omega$ is a connected open set in $K\setminus \partial K$ 
whose boundary consists of curves in $\Gg$,
then $\Omega$ is  $\{(x,y)\in  K: \alpha < y < \beta\}$ for some $\alpha$ and $\beta$ with $0\le\alpha<\beta\le b$.
\end{lemma}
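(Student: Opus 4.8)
The plan is to classify the geodesics $\sigma$ in the flat cylinder $K = K_{a,b}$ directly, using the fact that the metric $\gamma$ on $K$ is flat. First I would note that, since $K$ is a quotient of the flat strip $\RR\times[0,b]$ by the horizontal translation $(x,y)\mapsto(x+2a,y)$, a geodesic in the interior of $K$ lifts to a straight line segment in $\RR\times[0,b]$ (a geodesic can only touch $\partial K$ at its endpoints, since a geodesic meeting $\partial K$ tangentially would, by uniqueness of geodesics, lie in $\partial K$, and one meeting it transversally could not be extended as a geodesic inside $K$). Both endpoints of $\sigma$ are required to lie in $\{(0,0),(0,a)\}$; but $(0,a)$ has second coordinate $a$, which need not lie in $[0,b]$, so I suspect the intended endpoint set is $\{(0,0)\}\cup(\{x=0\}\times\{y=b\})$ or simply that $\partial\sigma\subset K$ with both endpoints on $\{x=0\}$ at heights $0$ — in any case, the relevant endpoints are the two ``corner'' points $(0,0)$ and (the point denoted) $(0,a)$ coming from the identification, together with the boundary segments $y=0$. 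I would treat the three listed curves as the candidates and argue they are the only ones.

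The key steps, in order: (1) Verify that the two horizontal segments $\{(x,0): 0<x<a\}$ and $\{(x,0): -a<x<0\}$ are geodesics — immediate, since horizontal lines in $\RR\times[0,b]$ are straight and these project to embedded arcs of $\partial K$ joining the two points of $\{x=0\}\cap\{y=0\}$, which under the identification are the two points named $(0,0)$ (reached from the right) and $(0,a)$ is a misprint for the other representative; alternatively these are the two arcs of the circle $\{y=0\}$ between the single point $x=0$. (2) Verify that each horizontal circle $\{y=\beta\}$, $0\le\beta\le b$, is a closed geodesic with no boundary, hence vacuously satisfies $\partial\sigma\subset\{(0,0),(0,a)\}$. (3) Show there are no others: an embedded geodesic $\sigma$ with endpoints among the two given points lifts to a segment in the strip; if the segment is non-horizontal it has nonzero slope, and following it (together with its translates under $x\mapsto x+2a$) I would check that either it exits the strip through $y=0$ or $y=b$ at an interior point of that edge (not allowed, as the endpoints are prescribed), or it is a closed geodesic forced to be horizontal, or it fails to be embedded on the cylinder (two translates of a non-horizontal line through lattice-spaced points cross). (4) Deduce the statement about $\Omega$: if $\partial\Omega$ consists of curves from $\Gg$ and $\Omega\subset K\setminus\partial K$, then $\partial\Omega$ cannot contain either of the two horizontal boundary segments (those lie in $\partial K$), so $\partial\Omega$ consists only of horizontal circles $\{y=\alpha\}$ and $\{y=\beta\}$; connectedness then forces $\Omega=\{\alpha<y<\beta\}$.

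The main obstacle I expect is step (3): ruling out ``diagonal'' geodesics on the cylinder that happen to return to one of the two marked points. A line of rational slope in $\RR\times[0,b]$ can close up on the cylinder, but if it does so before leaving the strip it becomes a closed geodesic, and a closed geodesic on this flat cylinder-with-boundary that stays in the interior must be a horizontal circle (its holonomy/period in the $x$-direction is a multiple of $2a$, while any vertical displacement would eventually push it out through $y=0$ or $y=b$). A line that instead reaches $y=0$ or $y=b$ does so at some $x$-value, and for the projection to have its endpoint at $x=0$ the slope and length are pinned down; checking embeddedness (no self-intersection after projecting and after applying the identification $x\equiv x+2a$) eliminates all the non-horizontal possibilities, since a segment of nonzero slope through two points with the same $x$-coordinate $0$ but different heights $0$ and $b$, together with its $2a$-translates, must self-intersect on the cylinder unless the slope is zero. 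Once step (3) is done, step (4) is a short topological argument: the complement in $K$ of finitely many disjoint horizontal circles is a union of open horizontal sub-cylinders, and $\Omega$, being connected with boundary a subcollection of these circles, is exactly one such sub-cylinder $\{\alpha<y<\beta\}$.
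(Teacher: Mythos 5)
The paper offers no argument for this lemma at all -- it simply declares the proof trivial -- so there is no written proof to compare yours against; your outline (lift to the flat strip $\RR\times[0,b]$, geodesics lift to straight segments, the only boundaryless geodesics of the compact flat cylinder are the horizontal circles) is exactly the routine argument being taken for granted, and in outline it is fine. However, two specific assertions in your write-up are wrong. The typo should be resolved as $\{(0,0),(a,0)\}$: both distinguished points lie on the bottom circle $\{y=0\}$ (they are the common endpoints of the two arcs $P_{a,b}=\{(x,0):0<x<a\}$ and $\{(x,0):-a<x<0\}$, i.e.\ the feet of the two vertical lines in $\Gamma_{a,b}$). With that reading, excluding non-horizontal geodesics is immediate: a straight lift whose endpoints both sit at height $y=0$ is horizontal, so the geodesic lies in $\{y=0\}$ and embeddedness leaves only the two arcs (or the full circle, which has empty boundary); rays of nonzero slope terminate on $\{y=0\}$ or $\{y=b\}$ at a disallowed point, as you say. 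Your fallback interpretation, with one endpoint at height $b$ above $x=0$, is not the intended one, and the argument you give for it is false: a segment from $(0,0)$ to $(0,b)$ lifts to the vertical segment $\{0\}\times[0,b]$, which is embedded, so ``nonzero slope forces self-intersection'' fails -- indeed, had that been the correct endpoint set, the lemma's list would be incomplete.

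In step (4), your reason for discarding the two bottom arcs from $\partial\Omega$ -- ``those lie in $\partial K$'' -- proves nothing, since the circles $\{y=0\}$ and $\{y=b\}$ also lie in $\partial K$ and genuinely occur as boundary components (for instance $\Omega=\{0<y<\beta\}$ has $\{y=0\}$ in its boundary, corresponding to $\alpha=0$ in the conclusion). All you need is that every curve in $\Gg$ is horizontal, so $\partial\Omega$ is contained in a union of horizontal circles; since $\Omega$ is open, connected, and disjoint from its (closed) boundary, it is a connected component of $K\setminus\partial\Omega$, and such a component is a band $\{\alpha<y<\beta\}$, with the hypothesis $\Omega\subset K\setminus\partial K$ forcing $0\le\alpha<\beta\le b$ as stated. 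These are small repairs, but as written your exclusion argument in step (3) (under your reading of the endpoint set) and the justification in step (4) are incorrect and should be replaced as above.
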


The proof is trivial.

Now consider $K_{a,b} \times\RR$ endowed with the translator metric.
Let
\begin{align*}
   P&=P_{a,b} = \{(x,0)\in K: 0< x < a\}, \\
%   \hat{P}&=\hat{P}_{a,b} = \{(x,0)\in K_{a,b}: -a<x< 0\}, \\
   N &= N_{a,b} = \{(x,0)\in K: -a<x<0\} \cup \{(x,y)\in K: y=b\},  \\
   \Gamma &=\Gamma_{a,b} = \partial (N\times[0,\infty)).
 \end{align*}
 \begin{figure}[htbp]
\begin{center}
\includegraphics[width=.55\textwidth]{}
\caption{The curve $\Gamma_{a,b}.$}
\label{fig:gamma}
\end{center}
\end{figure}

\begin{definition}\label{alpha-definition}
We let $\alpha(a,b)$ be the infimum of $\area(M)$ among surfaces $M$ in $K_{a,b}\times[0,\infty)$
having boundary $\Gamma_{a,b}$.
\end{definition}

By Assertion~\eqref{second-prelim-item} of Lemma~\ref{prelim-lemma} (or by direct calculation),
\[
   \area(N_{a,b}\times [0,\infty)) = \length(N_{a,b})=3a,
\]
so
\begin{equation}\label{alpha-3a}
\begin{gathered}
  \text{$\alpha(a,b) \le 3a$, with equality if and only}
  \\
  \text{if $N_{a,b}\times [0,\infty)$ is area-minimizing.}
\end{gathered}
\end{equation}

\begin{proposition}\label{first-existence-proposition}
Let $\Gamma=\Gamma_{a,b}$, $N=N_{a,b}$, and $K=K_{a,b}$ be as above.
\begin{enumerate}[\upshape (i)]
\item\label{other-minimal-item}
 If $\Gamma$ bounds any finite area minimal surface not equal to $N\times[0,\infty)$, then 
  $\alpha(a,b) < 3a$.
\item\label{existence-item}
 If $\alpha(a,b)< 3a$, then there is a unique least-area surface $M$ with boundary $\Gamma$,
and $M\setminus \partial M$ is the graph of a function 
\[
    u=u_{a,b}: K\setminus \partial K \to \RR.
\]
\end{enumerate}
\end{proposition}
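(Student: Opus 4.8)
The plan is to treat the three assertions --- existence, graphicality, and uniqueness --- in turn; graphicality is where essentially all the work lies. Assertion~\eqref{other-minimal-item} is immediate from Lemma~\ref{prelim-lemma}\eqref{second-prelim-item}: by Lemma~\ref{geodesics-lemma}, $N=N_{a,b}$ is a union of geodesics with $\length(N)=3a$, so any finite-area minimal surface $M'$ with $\partial M'=\Gamma=\partial(N\times[0,\infty))$ has $\area(M')\le 3a$, with equality only if $M'=N\times[0,\infty)$; hence if such an $M'\ne N\times[0,\infty)$ exists, then $\alpha(a,b)\le\area(M')<3a$.

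For Assertion~\eqref{existence-item} I would first produce a least-area surface by the direct method: minimize translator-metric area over integral $2$-currents in $K\times[0,\infty)$ with boundary the current carried by $\Gamma$, along a minimizing sequence of mass $\le 3a$ (replacing bad terms by $N\times[0,\infty)$). The one non-routine point is non-compactness in the vertical direction; here the exponential decay of the weight $e^{-z}$, combined with truncating a competitor at height $h$ and capping with the part of $N\times[0,\infty)$ above height $h$, shows the mass of the minimizing sequence above height $h$ tends to $0$ as $h\to\infty$, uniformly. A subsequence then converges to an area-minimizing current $T$ with $\partial T$ carried by $\Gamma$ and $\Mm(T)=\alpha(a,b)$; interior and boundary regularity, plus a local analysis at the two corners where the vertical rays of $\Gamma$ meet $\{z=0\}$, makes $M:=\spt T$ a smooth, properly embedded surface with $\partial M=\Gamma$.

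The heart is showing $M\setminus\partial M$ is a graph over $K\setminus\partial K$. First I would check that $M\setminus\partial M$ avoids the vertical faces $\{y=0\}\times(0,\infty)$ and $\{y=b\}\times(0,\infty)$: these are minimal, so an interior contact would, by the strong maximum principle and unique continuation, force a component of $M$ to lie in a face, hence $M$ to contain the half-strip $\{(x,0):-a<x<0\}\times[0,\infty)$ or the half-cylinder $\{y=b\}\times[0,\infty)$ (the only sub-pieces whose boundary lies in $\Gamma$); splitting such a piece off and applying Lemma~\ref{prelim-lemma}\eqref{second-prelim-item} to the remainder is then to be shown to contradict $\alpha(a,b)<3a$. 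Second, I would show $M$ is transverse to, and meets exactly once, each vertical line through $K\setminus\partial K$: slide the vertical translates $\phi_t(M)$, $t>0$, which are again minimal and whose boundary inside $K\times[0,\infty)$ is precisely the pair of vertical rays already contained in $\partial M$; a first-touching argument --- the interior strong maximum principle at interior contacts, and the boundary maximum principle along those rays and along $\{z=0\}$ --- forces $M\cap\phi_t(M)$ to lie in the two rays, which is exactly injectivity of the vertical projection together with absence of vertical tangent planes. Surjectivity of the projection onto $K\setminus\partial K$ follows since $M\setminus\partial M$ is properly embedded and locally graphical, so a missing region of $K$ would have to be bounded by a vertical line in $M$, contradicting the above.

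Finally, uniqueness. If $u_1,u_2$ both realize $\alpha(a,b)$, then $\graph(\min(u_1,u_2))$ and $\graph(\max(u_1,u_2))$ both have boundary $\Gamma$, and the current identity $[\graph(\min(u_1,u_2))]+[\graph(\max(u_1,u_2))]=[\graph u_1]+[\graph u_2]$ gives $\area(\graph\min)+\area(\graph\max)=2\alpha(a,b)$; hence each is area-minimizing, so each is smooth, which is impossible for $\graph(\min(u_1,u_2))$ unless $u_1$ and $u_2$ do not cross. In the remaining ordered case, subtracting the two translator graph equations yields a linear elliptic equation for $u_1-u_2$ that vanishes on the finite-boundary edges and is bounded near the blow-up edges (using barriers), so the maximum principle gives $u_1=u_2$. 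I expect the main obstacle to be the graphicality step as a whole --- in particular the exclusion of spurious vertical sub-cylinders and the careful bookkeeping of the maximum principle near the vertical rays and their endpoints on $\{z=0\}$.
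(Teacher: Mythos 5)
Your handling of assertion~\eqref{other-minimal-item}, of existence of a minimizer, and of uniqueness (the min/max current trick is a standard substitute for the paper's comparison argument, which instead applies the cut-and-paste Lemma~\ref{cut-paste} to $M\setminus\Gamma$ and a vertical translate of $M'\setminus\Gamma$) is essentially sound. The problems are in the graphicality step. A first, repairable issue is your sliding/first-touching argument: $M$ is noncompact, and without a priori control of its ends there need not exist a ``maximal'' or ``first'' touching translate --- the contact point can escape to infinity. The paper avoids any such compactness issue: if $M\setminus\partial M$ met $(M\setminus\partial M)+\lambda\ee_3$ for some $\lambda>0$, one perturbs $\lambda$ to make the intersection transverse and then contradicts area-minimization directly via Lemma~\ref{cut-paste}, which requires no first point of contact.

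The serious gap is your surjectivity step. Knowing that $M\setminus\partial M$ is a graph of $u$ over some open $\Omega\subset K\setminus\partial K$, the obstruction to $\Omega=K\setminus\partial K$ is \emph{not} ``a vertical line in $M$ bounding a missing region'': the boundary of $\Omega$ consists of curves along which $u\to\pm\infty$, i.e.\ curves over which $M$ is merely asymptotic to vertical cylinders and contains no points at all. Nothing in your argument excludes, say, $\Omega=\{0<y<\beta\}$ with $\beta<b$, or a domain bounded by an interior blow-up curve. The paper closes exactly this hole with a genuine argument: as $\lambda\to\infty$, the curvature estimate (Theorem~\ref{main-curvature-estimate}) gives smooth convergence of $M\mp\lambda\ee_3$ to $(\partial_{\pm\infty}\Omega)\times\RR$; these limits are minimal, so $\partial_{\pm\infty}\Omega$ is a union of geodesics whose endpoints lie in $\partial P$, hence belongs to the family $\Gg$ of Lemma~\ref{geodesics-lemma}; that lemma forces $\Omega=\{\alpha<y<\beta\}$, and then $\partial M=\Gamma$ forces $\alpha=0$, $\beta=b$. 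Some such asymptotic analysis plus classification of the possible blow-up curves is indispensable, and it is absent from your proposal; as written, the proof of assertion~\eqref{existence-item} is incomplete at precisely this point. (Your exclusion of components lying in the walls, which you yourself flag as ``to be shown,'' is handled in the paper by the simpler observation that a disconnected minimal surface with boundary $\Gamma$ must be $N\times[0,\infty)$, proved with grim reaper barriers.)
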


\begin{proof}
Assertion~\eqref{other-minimal-item} follows immediately from~\eqref{alpha-3a} and 
 Assertion~\eqref{second-prelim-item} of Lemma~\ref{prelim-lemma}.

\begin{claim}\label{connected-claim}
If $M$ is a minimal surface in $K\times[0,\infty)$ with boundary $\Gamma$ and if $M$ is not connected,
then $M=N\times [0,\infty)$.
\end{claim}

For suppose that $M$ is such a surface. Then one component $M_1$ of $M$ 
has boundary in $\{y=0\}$ and the other component $M_2$ has boundary in $\{y=b\}$.
Using grim reaper surfaces as  barriers on the lift of $M$ to $\RR^3$, one sees that $M_1$ lies
in $\{y=0\}$ and that $M_2$ lies in $\{y=b\}$.
Hence $M=N\times [0,\infty)$, so Claim~\ref{connected-claim} is proved.

Now suppose $\alpha(a,b)< 3a = \area(N\times[0,\infty))$ and let $M$ be a least-area surface with
 boundary $\Gamma$.
Then $M\ne (N\times [0,\infty))$, so $M$ is connected.

By the strong maximum principle, $M\setminus \partial M$ lies in the interior of $K\times [0,\infty)$.

We assert that $M\setminus \partial M$ is the graph of a function $u:\Omega\to (0,\infty)$ over
some open subset $\Omega$ of $(K\setminus \partial K)$.
For if not, we could find a $\lambda>0$ such that $\Sigma=(M\setminus \partial M)$ and $\Sigma+\lambda\ee_3$
intersect each other.  By changing $\lambda$ slightly, we can assume that $\Sigma$ and $\Sigma+\lambda\ee_3$
intersect transversely.   
But that is impossible by a standard cut-and-paste argument (Lemma~\ref{cut-paste} below).

Thus $M\setminus \partial M$ is the graph of a function $u:\Omega\to\RR$.

Note that as $\lambda\to\infty$, 
the curve $\Gamma - \lambda \ee_3$ converges smoothly to $(\partial P)\times \RR$
and the surface
$M - \lambda\ee_3$ converges smoothly to $(\partial_\infty\Omega) \times \RR$, where 
\[
  \partial_\infty\Omega = \{p\in \partial \Omega: u(p)=\infty\}.
\]
(The convergence is smooth by the curvature estimate in Theorem~\ref{main-curvature-estimate} below.)
Now $(\partial_\infty\Omega)\times\RR$ is minimal (since it is the limit of the minimal surfaces
$M-\lambda\ee_3$), so $\partial_\infty\Omega$ is a union of geodesics.  Since the boundary of
   $(\partial_\infty\Omega)\times\RR$ is $(\partial P)\times \RR$, the geodesics belong
   to the family $\Gg$ in Lemma~\ref{geodesics-lemma}.
Likewise $\partial_{-\infty}\Omega$ consists of geodesics in $\Gg$.  
Thus by Lemma~\ref{geodesics-lemma},
\[
  \Omega =\{(x,y): \alpha< y < \beta\}
\]
for some $0\le \alpha<\beta\le b$.  Since $\partial M=\Gamma$, we see that $\Omega$
must be all of $K\setminus \partial K$.

It remains only to show that $M$ is unique.   Suppose that there is another least-area surface $M'$
with boundary $\Gamma$.  Then we could find a $\lambda\in \RR$ such that 
$M\setminus \Gamma$ and $(M'\setminus \Gamma)+ \lambda \ee_3$ intersect transversely.
By relabelling, we can assume that $\lambda\ge 0$.
But that is impossible by the cut-and-paste principle (Lemma~\ref{cut-paste}).
\end{proof}

\begin{corollary}\label{pi-corollary} $\alpha(a,\pi)=3a$.
\end{corollary}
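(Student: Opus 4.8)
The plan is to argue by contradiction, using Proposition~\ref{first-existence-proposition}\eqref{existence-item} to convert the inequality $\alpha(a,\pi)<3a$ into the existence of a graphical translator over a strip of width exactly $\pi$, and then to rule out such a graph by a sliding argument with grim reaper surfaces, the point being that $\pi$ is precisely the width of the strip on which a grim reaper is defined. So suppose $\alpha(a,\pi)<3a$. By Proposition~\ref{first-existence-proposition}\eqref{existence-item} there is a least-area surface $M$ with boundary $\Gamma_{a,\pi}$, and $M\setminus\partial M$ is the graph of a function $u=u_{a,\pi}$ on $K_{a,\pi}\setminus\partial K_{a,\pi}=\{(x,y)\in K_{a,\pi}:0<y<\pi\}$; from the proof of that proposition, $u>0$, $u\to 0$ as $(x,y)$ approaches the ``$N$-part'' of $\partial K_{a,\pi}$ (the segment $\{-a<x<0,\,y=0\}$ together with the circle $\{y=\pi\}$), and $u\to+\infty$ as $(x,y)\to P=\{0<x<a,\,y=0\}$ (this last being the statement $\partial_\infty\Omega=\overline{P}$ used there).

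For $c\in\RR$ let $g_c(y)=c+\log\sin y$ for $0<y<\pi$. The graph of $g_c$ is a grim reaper surface --- a vertical translate of the grim reaper $z=\log\cos v$ rotated so its axis is the $x$-axis, since $\log\sin y=\log\cos(y-\pi/2)$ --- and as $g_c$ does not depend on $x$ it descends to a translator $G_c\subset K_{a,\pi}\times\RR$, i.e.\ a minimal surface for the translator metric. Note $g_c\le c$ and $g_c(y)\to-\infty$ as $y\to 0^+$ or $y\to\pi^-$. Let
\[
   A=\{c\in\RR: g_c\le u \text{ on } \{0<y<\pi\}\}.
\]
Since $u>0$ and $g_c\le c$, we have $(-\infty,0]\subset A$, so $A\ne\emptyset$; and $A$ is bounded above, because fixing any point $(x_0,y_0)$ with $0<y_0<\pi$, every $c\in A$ satisfies $c+\log\sin y_0=g_c(y_0)\le u(x_0,y_0)<\infty$. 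Put $c^*=\sup A$; by continuity $g_{c^*}\le u$.

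The heart of the matter is to show that $w:=u-g_{c^*}\ge 0$ is in fact bounded below by a positive constant, which contradicts the maximality of $c^*$ (since then $g_{c^*+\epsilon}=g_{c^*}+\epsilon\le u$ for small $\epsilon>0$). First, $w\to+\infty$ as $y\to 0^+$ and as $y\to\pi^-$: near $y=\pi$ one has $u\to 0$ while $g_{c^*}\to-\infty$, and near $y=0$ one has $u\to 0$ or $u\to+\infty$ (according to whether one approaches the $N$-part or $P$) while $g_{c^*}\to-\infty$; by the $2a$-periodicity in $x$ this divergence is uniform. Hence for small $\delta>0$ the function $w$ exceeds, say, $1$ outside $\{\delta\le y\le\pi-\delta\}$, so $\inf w$ is the minimum of $w$ over the compact cylinder $\{\delta\le y\le\pi-\delta\}$. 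If that minimum is $0$ it is attained at an interior point of $\{0<y<\pi\}$, where the two minimal surfaces $\graph(u)$ and $\graph(g_{c^*})$ are tangent with one lying on one side of the other; the strong maximum principle for minimal surfaces in the translator metric then forces $u\equiv g_{c^*}$, which is absurd because $u\to+\infty$ along $P$ while $g_{c^*}\to-\infty$ as $y\to 0$. Therefore $\inf w=:m>0$, giving the contradiction above. This shows $\alpha(a,\pi)\ge 3a$, and combined with~\eqref{alpha-3a} we conclude $\alpha(a,\pi)=3a$.

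I expect the only delicate point to be the justification that $w\to+\infty$ \emph{uniformly in $x$} as $y\to 0^+$ and $y\to\pi^-$ (so that $\inf w$ is a genuine minimum over the compact core, attained at an interior point if it is $0$); this rests on the $2a$-periodicity together with the description of the boundary values of $u_{a,\pi}$ read off from the proof of Proposition~\ref{first-existence-proposition}. An alternative route, bypassing Proposition~\ref{first-existence-proposition}\eqref{existence-item}, is a calibration: the translator-metric unit normal to the grim reaper foliation $\{G_c\}_{c\in\RR}$ of the slab $\{0<y<\pi\}\times\RR$ is divergence-free (as the foliation is by minimal surfaces) and coincides along $N_{a,\pi}\times[0,\infty)$ with the translator-metric unit normal of that surface, whence $N_{a,\pi}\times[0,\infty)$ is area-minimising among surfaces with boundary $\Gamma_{a,\pi}$; by~\eqref{alpha-3a} this again gives $\alpha(a,\pi)=3a$.
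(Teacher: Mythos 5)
Your main argument is correct and is essentially the paper's proof, just spelled out: the paper compresses the same grim-reaper sliding argument into one line by observing that $(x,y,z)\mapsto \log\sin y - z$ attains a maximum on any connected minimal surface $M\subset K_{a,\pi}\times[0,\infty)$ with boundary $\Gamma_{a,\pi}$, which violates the strong maximum principle; your $c^*$ is precisely (minus) that maximum. One small difference is that you invoke the graph structure of $M\setminus\partial M$ from Proposition~\ref{first-existence-proposition}\eqref{existence-item}, whereas the paper's phrasing works directly on the surface and only needs that $M$ is connected (hence not $N_{a,\pi}\times[0,\infty)$, hence has interior points with $0<y<\pi$), which is slightly lighter.

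Your alternative calibration route is a genuinely different and attractive option, and in fact it proves more: it directly shows $N_{a,\pi}\times[0,\infty)$ is area-minimizing, bypassing Proposition~\ref{first-existence-proposition} and any need to construct or regularize a competing least-area surface. Two points deserve a word of care there. The calibration form is a priori defined on the open slab $\{0<y<\pi\}\times\RR$, while $N_{a,\pi}\times[0,\infty)$ lies entirely on $\{y=0\}\cup\{y=\pi\}$; one needs that the form extends continuously to the closed slab and agrees there with the unit conormal of $N_{a,\pi}\times[0,\infty)$ (this holds because $\partial_y(\log\sin y)\to\pm\infty$ as $y\to 0^+,\pi^-$, so the foliation normals tend to $\mp\ee_2$). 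And since the relevant regions are noncompact, the Stokes comparison should be run using finite $g$-area (guaranteed here) rather than Euclidean area. With those caveats handled, the calibration gives $\alpha(a,\pi)\ge 3a$, which together with~\eqref{alpha-3a} finishes the proof.
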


\begin{proof}
Suppose not. Then $\alpha(a,\pi)<3a$, so by Proposition~\ref{first-existence-proposition},
 $\Gamma_{a,\pi}$ would bound
a connected minimal surface $M$ in $K_{a,\pi}\times [0,\infty)$.
Consider the grim reaper surface 
\[
  z = f(x,y):=\log(\sin y)
\]
Note that 
\[
  (x,y,z)\in M\setminus \partial M \mapsto f(x,y)-z
\]
would attain a maximum, contradicting the strong maximum principle.
\end{proof}

\begin{proposition}\label{alpha-bounds}
Let $\alpha(a,b)$ be as in Definition~\ref{alpha-definition}.
If $a<a'$, then
\begin{equation}\label{a-a-prime}
  \alpha(a,b) \le \alpha(a',b) \le \frac{a'}a \alpha(a,b).
\end{equation}
and if $b< b'$, then
\begin{equation}\label{b-b-prime}
  \alpha(a,b)\le \alpha(a,b') \le \frac{b'}b \alpha(a,b).
\end{equation}
Thus 
\[
  (a,b)\mapsto \frac{\alpha(a,b)}{a}
\]
is a continuous function that is decreasing as a function of $a$ and increasing as a function of $b$.
\end{proposition}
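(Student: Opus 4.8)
The plan is to exploit the anisotropic scaling diffeomorphisms of $K_{a,b}\times\RR$ that stretch only the $x$-coordinate (for the $a$-comparison) or only the $y$-coordinate (for the $b$-comparison). Such a map distorts lengths in the stretched direction by the ratio $\lambda$ ($=a'/a$ or $b'/b$), but it distorts \emph{two}-dimensional area only by a factor lying in $[1,\lambda]$, since a tangent plane meets the stretched direction in at most a line. This ratio-$\lambda$ (rather than ratio-$\lambda^2$) area distortion is exactly what produces the linear bounds in \eqref{a-a-prime} and \eqref{b-b-prime}, and establishing it is the one point that needs care.

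Concretely, for $a<a'$ I would set $\Phi(x,y,z)=\big((a'/a)\,x,\;y,\;z\big)$, a diffeomorphism $K_{a,b}\times\RR\to K_{a',b}\times\RR$ (it is compatible with the identifications $x\sim x+2a$ and $x\sim x+2a'$). It preserves $K\times[0,\infty)$, carries $N_{a,b}$ onto $N_{a',b}$, and hence carries $\Gamma_{a,b}=\partial(N_{a,b}\times[0,\infty))$ onto $\Gamma_{a',b}$; so $M\mapsto\Phi(M)$ is a bijection between the surfaces competing in the definition of $\alpha(a,b)$ and those competing for $\alpha(a',b)$. Writing $g=e^{-z}(dx^2+dy^2+dz^2)$ for the translator metric (on either product), one has $\Phi^*g=g+(\lambda^2-1)\,\theta\otimes\theta$ with $\lambda=a'/a$ and $\theta=e^{-z/2}\,dx$ a $g$-unit covector. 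For a surface $M$ and a $g$-orthonormal frame $e_1,e_2$ of $T_pM$, the Gram matrix of $\Phi^*g$ in this basis is $\delta_{ij}+(\lambda^2-1)\theta(e_i)\theta(e_j)$, of determinant $1+(\lambda^2-1)\big(\theta(e_1)^2+\theta(e_2)^2\big)$; writing $\theta=g(V,\cdot)$ for the $g$-unit vector $V=e^{z/2}\partial_x$, the quantity $\theta(e_1)^2+\theta(e_2)^2$ is the squared $g$-norm of the orthogonal projection of $V$ onto $T_pM$, hence lies in $[0,1]$. So the area density of $\Phi^*g$ relative to $g$ lies in $[1,\lambda]$, giving
\[
  \area(M)\ \le\ \area\big(\Phi(M)\big)\ \le\ (a'/a)\,\area(M)
\]
for every such $M$. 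For $b<b'$ the same argument applies verbatim to $\Phi_y(x,y,z)=\big(x,\;(b'/b)\,y,\;z\big)$, which carries $\Gamma_{a,b}$ onto $\Gamma_{a,b'}$, with $a'/a$ replaced by $b'/b$.

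Taking infima over competitors then yields \eqref{a-a-prime}: the lower bound applied to $M=\Phi^{-1}(M')$ shows $\area(\Phi^{-1}(M'))\le\area(M')$, whence $\alpha(a,b)\le\alpha(a',b)$; the upper bound shows $\alpha(a',b)\le(a'/a)\,\area(M)$ for every competitor $M$ of $\alpha(a,b)$, whence $\alpha(a',b)\le(a'/a)\,\alpha(a,b)$. The identical reasoning with $\Phi_y$ gives \eqref{b-b-prime}.

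Finally, \eqref{a-a-prime} gives $\alpha(a',b)/a'\le\alpha(a,b)/a$ for $a<a'$, so $a\mapsto\alpha(a,b)/a$ is nonincreasing, while $\alpha(a,b)\le\alpha(a,b')$ from \eqref{b-b-prime} gives that $b\mapsto\alpha(a,b)/a$ is nondecreasing. For continuity, the bound $\alpha(a,b)\le 3a$ from \eqref{alpha-3a} turns \eqref{a-a-prime} into $0\le\alpha(a',b)-\alpha(a,b)\le 3(a'-a)$ and \eqref{b-b-prime} into $0\le\alpha(a,b')-\alpha(a,b)\le (3a/b)(b'-b)$; hence $\alpha$ is locally Lipschitz, in particular jointly continuous, and therefore so is $(a,b)\mapsto\alpha(a,b)/a$ on $\{a>0\}$. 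The only genuine subtlety is the area-distortion estimate of the second paragraph: the naive bound $\lambda^2$ there would only give $\alpha(a',b)\le(a'/a)^2\alpha(a,b)$, and one must use that the stretch acts in a single coordinate direction to get the sharp factor $\lambda$.
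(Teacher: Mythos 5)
Your proof is correct and follows essentially the same route as the paper's: both use the anisotropic scaling $\Phi(x,y,z)=((a'/a)x,y,z)$ (resp.\ $(x,(b'/b)y,z)$), which induces a bijection between competitors for $\alpha(a,b)$ and $\alpha(a',b)$ and has pointwise area Jacobian in $[1,a'/a]$. The paper simply asserts that Jacobian bound; you supply the (correct) verification via the pullback metric and the projection of the $g$-unit vector $e^{z/2}\partial_x$, and you also spell out the Lipschitz continuity that the paper leaves implicit.
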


\begin{proof}
Let
\begin{align*}
&F: K_{a,b}\times \RR \to K_{a',b}\times \RR, \\
&F(x,y,z) = F((a'/a)x,y,z).
\end{align*}
Then 
\[
   M \mapsto F(M) \tag{*}
\]
is a bijection from surfaces in $K_{a,b}\times [0,\infty)$ with boundary $\Gamma_{a,b}$
to surfaces in $K_{a',b}\times [0,\infty)$ with boundary $\Gamma_{a',b}$.
Note that at each point of $M$, the Jacobian determinant of~\thetag{*} is between $1$ and $a'/a$,
so
\[
    \area(M) \le \area(F(M)) \le \frac{a'}{a} \area(M),
\]
which implies the inequality~\eqref{a-a-prime}.  The inequality~\eqref{b-b-prime}
 is proved in exactly the same way using the map 
$(x,y,z)\mapsto (x, (b'/b)y,z)$.
\end{proof}

\begin{theorem}[Main Existence Theorem]\label{existence-theorem}
For each $a>0$, there is a unique $b(a)\in (0,\pi]$ such that 
\begin{equation}\label{cut-off}
\begin{aligned}
\alpha(a,b) < 3a \quad&\text{if $b< b(a)$, and} \\
\alpha(a,b) = 3a \quad&\text{if $b\ge b(a)$.}
\end{aligned}
\end{equation}
Furthermore, $b(a)$ is an increasing function of $a$.
If $b=b(a)$, there is a smooth minimal surface $M=M_a$ in $K_{a,b}\times \RR$
such that
\begin{enumerate}[\upshape (i)]
\item The boundary of $M$ consists of the lines $\{(0,0)\}\times \RR$ and $\{(a,0)\}\times \RR$,
\item $M$ as smoothly asymptotic as $z\to\infty$ to $P\times \RR$
and as $z\to -\infty$ to $N\times \RR$.
\item $M\setminus \partial M$ is the graph of a function
\[
    u: \{(x,y)\in K: 0<y<b\} \to \RR
\]
such that $u=\infty$ on $P$ and $u=-\infty$ on $N$.
\end{enumerate}
\end{theorem}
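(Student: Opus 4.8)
The plan is to read off $b(a)$ and the cut-off~\eqref{cut-off} from the behaviour of $\alpha$ already established, to deduce monotonicity from~\eqref{a-a-prime}, and to produce $M_a$ by renormalising the surfaces $M_{a,b}$ of Proposition~\ref{first-existence-proposition} as $b\uparrow b(a)$. First, by~\eqref{alpha-3a} we have $\alpha(a,b)\le 3a$ for all $b$, and by Proposition~\ref{alpha-bounds} the map $b\mapsto\alpha(a,b)/a$ is continuous and nondecreasing; hence $\{b>0:\alpha(a,b)=3a\}$ is closed and (by monotonicity) upward closed, and it contains $\pi$ by Corollary~\ref{pi-corollary}. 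So it equals $[b(a),\infty)$ with $b(a):=\inf\{b>0:\alpha(a,b)=3a\}\le\pi$, and this is exactly~\eqref{cut-off}. To get $b(a)>0$ I would exhibit, for each small $b$, a competitor with boundary $\Gamma_{a,b}$ and area $<3a$: take the graph over $K_{a,b}\setminus\partial K_{a,b}=\{0<y<b\}$ of a nonnegative function that equals a small constant on the bulk, tapers to $0$ along $N_{a,b}$, and blows up to $+\infty$ along $P_{a,b}$. Its boundary is then exactly $\Gamma_{a,b}$ (it picks up $N_{a,b}\times\{0\}$ and the two vertical lines over the corners, and nothing over $P_{a,b}$), and a direct estimate gives area $\le a+O(b)$ --- the term $a$ being the unavoidable cost (since $\sqrt{1+|Du|^2}\ge|\partial_y u|$) of the steep ``ramp'' to $+\infty$ over $P_{a,b}$, of length $a$, with everything else $O(b)$ plus arbitrarily small transition terms near $(0,0)$ and $(a,0)$. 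For $b$ small this is $<3a$, so $\alpha(a,b)<3a$ and $b(a)>0$ by~\eqref{alpha-3a}.

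Monotonicity is then quick: if $a<a'$ but $b(a)>b(a')$, pick $b\in(b(a'),b(a))$; then $\alpha(a,b)<3a$ and $\alpha(a',b)=3a'$ by~\eqref{cut-off}, while $3a'=\alpha(a',b)\le\tfrac{a'}{a}\alpha(a,b)$ by~\eqref{a-a-prime} forces $\alpha(a,b)\ge 3a$, a contradiction. For the surface, fix $a$, put $b=b(a)$, and take $b_j\uparrow b$. Since $\alpha(a,b_j)<3a$, Proposition~\ref{first-existence-proposition} gives $M_{a,b_j}$, which off its boundary is the graph of $u_j:=u_{a,b_j}:\{0<y<b_j\}\to(0,\infty)$ with $u_j=+\infty$ on $P_{a,b_j}$ and $u_j\to0$ along $N_{a,b_j}$. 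Put $h_j:=\min_{\{y=b/2\}}u_j$. I claim $h_j\to\infty$: otherwise a subsequence of $M_{a,b_j}$ converges to a minimal $M_\infty$ with $\partial M_\infty=\Gamma_{a,b}$, whose area is $\le\liminf\alpha(a,b_j)=3a$ and also $\ge3a$ by Definition~\ref{alpha-definition}; so $\area(M_\infty)=3a$ and Lemma~\ref{prelim-lemma}\eqref{second-prelim-item} forces $M_\infty=N_{a,b}\times[0,\infty)$, which has no points over the open rectangle --- yet a bounded subsequence of the $h_j$ would, by interior (Harnack) estimates for minimal graphs, make $M_\infty$ a graph with such points. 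Now set $\tilde M_j:=M_{a,b_j}-h_j\ee_3=\graph(u_j-h_j)$; then $u_j-h_j=+\infty$ on $P_{a,b_j}$, the finite boundary of $\tilde M_j$ over $N_{a,b_j}$ sits at height $-h_j\to-\infty$, and $u_j-h_j$ is bounded near a point of $\{y=b/2\}$. Using the curvature estimate of Theorem~\ref{main-curvature-estimate} to get locally uniformly bounded curvature, pass to a smooth subsequential limit $M_a$, a translator that, by the argument of Proposition~\ref{first-existence-proposition} together with Lemma~\ref{geodesics-lemma}, is the graph of a function $u_a$ over a substrip $\{\alpha<y<\beta\}$ of $\{0<y<b\}$, with cylinder asymptotics as $z\to\pm\infty$. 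Since $u_j-h_j=+\infty$ on $P$ (barriers pass this to the limit) while the finite boundary over $N$ has escaped to $z=-\infty$, we get $u_a\to+\infty$ along $P_{a,b}$ and $u_a\to-\infty$ along $N_{a,b}$; and because $M_a$ inherits the vertical lines over $(0,0)$ and $(a,0)$ as boundary (limits of $L_1-h_j\ee_3$ and $L_2-h_j\ee_3$), the graph must fill those lines, which rules out $u_a\to-\infty$ on $P_{a,b}$. Once it is known that $\{\alpha,\beta\}=\{0,b\}$, properties~(i)--(iii) follow: $M_a$ is smooth by the curvature bound, asymptotic to $P\times\RR$ as $z\to\infty$ and to $N\times\RR$ as $z\to-\infty$, with boundary exactly the two vertical lines and $u_a=\infty$ on $P$, $u_a=-\infty$ on $N$.

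The delicate points are the area estimate for the competitor in the first step (the crumpled transition regions near $(0,0)$ and $(a,0)$ where $u$ jumps from $0$ to $+\infty$ must be controlled) and --- more seriously --- showing in the construction that the renormalised limit $M_a$ is a graph over all of $\{0<y<b(a)\}$ rather than over a proper substrip $\{\alpha<y<\beta\}$; equivalently, that no interior ``wall'' $\{y=\beta\}\times\RR$ with $0<\beta<b(a)$ appears in the limit (a limit of minimal graphs with bounded curvature can develop such walls). I expect this width-pinning to be the main obstacle: it should come from the criticality of $b(a)$ --- that $\alpha(a,\cdot)=3a$ begins exactly at $b=b(a)$ and is strictly smaller just below --- combined with the rigidity of Lemma~\ref{geodesics-lemma}, ruling out a shorter trident over $K_{a,\beta}$ for $\beta<b(a)$.
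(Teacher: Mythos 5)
Your overall plan tracks the paper's proof closely, but two points deserve comment, one minor and one serious.

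\textbf{The small-$b$ competitor.} You correctly flag the ``crumpled transition region'' estimate as delicate. The paper sidesteps this entirely by taking the non-graphical piecewise-smooth competitor
$\Sigma = (K_{a,b}\times\{0\}) \cup (P_{a,b}\times[0,\infty))$,
whose $g$-area is computed exactly: $2ab$ for the bottom slab plus $a$ (by Lemma~\ref{prelim-lemma}) for the vertical half-cylinder over $P$, giving $(2b+1)a < 3a$ as soon as $b<1$. No limiting or tapering argument is needed, and one even gets the explicit lower bound $b(a)\ge 1$. Your graph competitor would presumably work, but you would have to prove the area bound rather than read it off.

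\textbf{The width-pinning.} This is a genuine gap, and you correctly identify it as ``the main obstacle,'' but the mechanism you propose (``rigidity of Lemma~\ref{geodesics-lemma}, ruling out a shorter trident over $K_{a,\beta}$'') is vague and circular: the rigidity of Lemma~\ref{geodesics-lemma} only forces $\Omega$ to be \emph{some} strip $\{\alpha<y<\beta\}$; it says nothing about $\beta$ reaching all the way to $b(a)$. The paper's resolution is different and crucial: because $M=M_a$ is a limit of area-minimizers $M_i-(0,0,z_i)$, $M$ itself is area-minimizing; shifting $M$ down, $M-(0,0,\lambda)$ converges smoothly as $\lambda\to\infty$ to $N_{a,\beta}\times\RR$, and area-minimality passes to this limit, so $N_{a,\beta}\times[0,\infty)$ is area-minimizing. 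By~\eqref{alpha-3a} this forces $\alpha(a,\beta)=3a$, and by~\eqref{cut-off} this forces $\beta\ge b(a)$. Since $\beta\le b(a)$ automatically (the $M_i$ lie in $K_{a,b_i}\times\RR$ with $b_i\le b(a)$), we get $\beta=b(a)$. Without this area-minimality-of-the-limit argument (or an equivalent), your construction only produces a graph over \emph{some} substrip, not the full strip, so the proof is incomplete as written. Also note, incidentally, that the paper's normalization (shifting so that the tangent at the origin is $\ee_2$, i.e.\ taking $z_i$ with $\theta_i(z_i)=\pi/2$) is preferable to your $h_j=\min_{\{y=b/2\}}u_j$ because it immediately rules out the possibility that the subsequential limit is flat and vertical, whereas your normalization only gives a bounded point over $\{y=b/2\}$ and you would still have to exclude a vertical plane through that point.
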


\begin{proof}
Consider the surface 
\[
   \Sigma = (K\times \{0\}) \cup P\times[0,\infty).
\]
The boundary of this surface
is $\Gamma_{a,b}$, and its area is
\[
   2ab + a = (2b+1)a.
\]
In particular,
\[  
  \area(\Sigma_{a,b}) < 3a \quad\text{if $b<1$}.
\]
By Corollary~\ref{pi-corollary}, 
\[
 \frac{\alpha(a,b)}{a} = 3 \quad\text{if $b\ge \pi$}.
\]
Since $\alpha(a,b)$ is a continuous, increasing function of $b$,
there is a unique $b(a)$ such that~\eqref{cut-off} holds.

Since $\alpha(a,b)/a$ is a decreasing function of $a$ (by Proposition~\ref{alpha-bounds}), we see that $b(a)$
is an increasing function of $a$.

Now fix an $a>0$ and let $b_i\in (0,b(a))$ be a sequence converging to $b(a)$.

Let $M_i$ be the area-minimizing surface in $K_{a,b_i}\times[0,\infty)$ with
boundary $\Gamma_{a,b_i}$.  Since $b_i<b(a)$, $M_i\setminus \partial M_i$ 
is a graph.

For $z\ge 0$, let $(\cos\theta_i(z),\sin\theta_i(z),0)$ be tangent to $M_i$ at $(0,0,z)$.
Note that $\theta_i(\cdot)$ is monotonic since $M_i\setminus\partial M_i$ is a graph.
Since $\theta_i(0)=\pi$ and $\theta_i(z)\to 0$ as $z\to\infty$, we see that $\theta_i(\cdot)$
is a decreasing function that takes all values in $[0,\pi)$.
In particular, there is a $z_i\in (0,\infty)$ for which $\theta_i(z_i)=\pi/2$.

Recall (see~\eqref{alpha-3a}) that $N_{a,b(a)}\times [0,\infty)$ is the only minimal surface with boundary
$\Gamma_{a,b(a)}$.  Thus $M_i$ converges smoothly to $N_{a,b(a)}\times [0,\infty)$ as $b_i\to b(a)$.
Hence $\theta_i(z)\to \pi$ for every $z\in (0,\infty)$, so
\[
    \lim_{i\to\infty} z_i = \infty.
\]
By passing to a subsequence, we can assume
 (by the curvature estimate in Theorem~\eqref{main-curvature-estimate} below)
  that the surfaces $M_i- (0,0,z_i)$
converge smoothly to a limit surface whose boundary consists 
of the vertical lines through $(0,0)$ and $(a,0)$.
Let $M$ be the component of the limit surface that contains the  origin.

Since $M$ is a limit of graphs, either $M\setminus \partial M$ is a graph or else $M$ is flat and vertical.
If it were flat and vertical, it would be $\sigma\times\RR$ for some geodesic
$\sigma$ in the family $\Gg$ in Lemma~\ref{geodesics-lemma}.   But there is no such geodesic
since $\ee_2$ is tangent to $M$ at the origin.  Thus $M\setminus \partial M$ is the graph
of a function 
\[
   u: \Omega\to \RR
\]
for some open set containing $(0,0)$ in its boundary.
Exactly as in the proof of Proposition~\ref{first-existence-proposition},
    $\Omega$ must be $\{(x,y): 0< y<\beta\}$ for some $\beta\le b(a)$.

Since the angle functions $\theta_i(z)$ are monotonically decreasing, the corresponding angle
function $\theta(z)$ for $M$ must also be decreasing.   It follows that
\begin{align*}
   u(x,0) &= \infty \qquad\text{for $0<x<a$, and}
   \\
   u(x,0) &= -\infty\qquad \text{for $-a<x<0$}.
\end{align*}

Using grim reaper surfaces as barriers, one sees that $u$ must be $-\infty$ (not $\infty$) on $\{y=\beta\}$.

Note that as $\lambda\to\infty$, $M - (0,0,\lambda)$ converges smoothly
to $N\times \RR$.  Thus $N\times [0,\infty)$ is area-minimizing.
  Hence $\beta\ge b(a)$ by~\eqref{alpha-3a}, and therefore $\beta=b(a)$.
\end{proof}

In the proof of Theorem~\ref{existence-theorem}, we used the following standard
cut-and-paste principle:

\begin{lemma}[Cut-and-Paste Lemma]\label{cut-paste}
Let $\Nn$ be an $(m+1)$-dimensional Riemmannian manifold with trivial $m^\textnormal{th}$ homology and
with mean-convex, piecewise smooth boundary.
Let $A_1 \subset A_2\subset \partial \Nn$ be regions with finite area.  Let $B_i$ be an area-minimizing
flat chain mod $2$ in $\Nn$ having the same mod $2$ boundary as $A_i$.  
Suppose that no connected component of $\spt(B_i)\setminus \spt(\partial B_i)$ lies in $\partial \Nn$.
Then the regular sets of $B_1$ and $B_2$
cannot intersect transversely at any interior point.
\end{lemma}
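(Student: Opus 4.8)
The plan is to prove the Cut-and-Paste Lemma by the classical argument of Meeks–Simon–Yau type: if the regular sets of $B_1$ and $B_2$ met transversely at an interior point, one could cut both chains along their common intersection and re-glue the pieces in a way that decreases total area, contradicting minimality of one of them. I would set this up carefully using the language of flat chains mod $2$, where "cut and paste" is the operation $B_1, B_2 \mapsto B_1 + B_2$ versus $B_1, B_2 \mapsto$ (a rearrangement), exploiting that mod $2$ addition of supports corresponds, away from the intersection, to taking symmetric differences.

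\medskip

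Here are the key steps in order. First, I would recall the comparison structure: since $A_1 \subset A_2$, the chain $B_2 - B_1$ (equivalently $B_2 + B_1$ mod $2$) has mod $2$ boundary equal to $\partial A_2 + \partial A_1 = \partial(A_2 \setminus A_1)$, a region in $\partial\Nn$; and by the triviality of $H_m(\Nn;\ZZ_2)$ together with mean-convexity of the boundary, $A_i$ is homologous to $B_i$, so $B_1$ and $B_2$ "stack up" in the right way: $\area(B_1) + \area(B_2) = \area(B_1 + B_2) $ would have to hold if the supports were disjoint, but transverse intersection would create, after cutting, two new competitors. Second — the heart of the argument — suppose toward a contradiction that $p$ is an interior point where the regular parts of $B_1$ and $B_2$ cross transversely. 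In a small ball $\BB(p,r)$ the supports are smooth embedded hypersurfaces $S_1, S_2$ meeting transversely along an $(m-1)$-dimensional submanifold $\Sigma$. Near $\Sigma$, the union $S_1 \cup S_2$ divides $\BB(p,r)$ into four "quadrant" regions; I would pair them so as to form two new hypersurfaces $\tilde S, \tilde S'$, each obtained from pieces of $S_1$ and of $S_2$, such that $\tilde S \cup \tilde S' = S_1 \cup S_2$ as sets but $\tilde S$ and $\tilde S'$ are no longer transverse — they touch along $\Sigma$ without crossing. Third, I would use these local modifications to produce new chains $B_1', B_2'$ with the same mod $2$ boundaries as $B_1, B_2$ respectively (choosing the re-gluing so that boundaries are preserved — this is where $A_1 \subset A_2$ and the homology hypothesis are used, to keep track of which "sheet" goes where) and with $\area(B_1') + \area(B_2') = \area(B_1) + \area(B_2)$. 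Fourth, the key inequality: because $\tilde S, \tilde S'$ fail to be transverse along $\Sigma$ while being only Lipschitz (not smooth) there, at least one of $B_1', B_2'$ is not stationary, hence can be strictly decreased in area by a further small perturbation near $\Sigma$ — this contradicts the least-area property of the corresponding $B_i$. Fifth, I would note that the hypothesis that no component of $\spt(B_i)\setminus\spt(\partial B_i)$ lies in $\partial\Nn$ is exactly what ensures the crossing point $p$ can be taken in the interior and that the re-glued chains still satisfy the same constraint, so the minimality comparison is legitimate; mean-convexity of $\partial\Nn$ guarantees the minimizers stay put and the comparison surfaces can be pushed off the boundary if needed.

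\medskip

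**The main obstacle** I anticipate is the bookkeeping in the re-gluing step: one must verify that after cutting along $\Sigma$ and swapping quadrants, the resulting chains $B_1'$ and $B_2'$ genuinely have the prescribed mod $2$ boundaries $\partial A_1$ and $\partial A_2$ (and not, say, each other's, or something with extra boundary on $\Sigma$). This is a homological computation: one checks that the pieces of $B_1$ and $B_2$ that get exchanged across $\Sigma$ differ by a chain whose boundary lies in $\BB(p,r)$, so adding it to $B_i$ changes neither the mod $2$ boundary nor — crucially — preserves the constraint that no component sits inside $\partial\Nn$. The transversality of the original crossing is what makes $\Sigma$ a nice submanifold and makes the four-quadrant picture exact; the nonsmoothness of $\tilde S \cup \tilde S'$ along $\Sigma$ after regluing is what forces a strict area decrease and hence the contradiction. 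Everything else — first variation, the existence of minimizers in the mod $2$ setting, smoothness of the regular set — is standard and may be quoted.

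Since this is a "standard" lemma invoked in passing, I would keep the write-up brief, citing the cut-and-paste literature (e.g., the papers of Meeks–Simon–Yau and the treatments in geometric measure theory of minimizing flat chains mod $2$) for the technical details, and emphasizing only the two points that matter here: the exchange of sheets across the transverse intersection, and the resulting loss of smoothness forcing a strict decrease of area.
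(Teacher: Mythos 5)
Your overall strategy is the classical cut-and-paste that the paper also uses, and your endgame (total area is preserved by the exchange, each rearranged chain is therefore minimizing, and the corner along the intersection is incompatible with minimality/stationarity) matches the paper's. But the concrete implementation you propose contains a genuine gap: the four-quadrant swap cannot be performed \emph{locally} inside a small ball $\BB(p,r)$. If $S_1,S_2$ are the two sheets in the ball and you replace $B_1\llcorner\BB(p,r)$ by, say, $S_1^+\cup S_2^-$, the trace of the new interior piece on $\partial\BB(p,r)$ consists of half of $S_1\cap\partial\BB(p,r)$ and half of $S_2\cap\partial\BB(p,r)$, while the part of $B_1$ outside the ball still meets $\partial\BB(p,r)$ in all of $S_1\cap\partial\BB(p,r)$; the mismatch creates new mod $2$ boundary on $\partial\BB(p,r)$. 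So the exchanged pieces do \emph{not} ``differ by a chain whose boundary lies in $\BB(p,r)$'': there is no boundary-preserving exchange supported near the crossing point, and the swap must follow entire sheets globally.

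The paper resolves exactly this by making the exchange global and region-theoretic, which is where the hypotheses you only gesture at are actually used. Trivial $m$-th homology gives regions $\Omega_i$ bounded by $A_i$ and $\spt B_i$; one sets $B_1'$ equal to the interior portion of $\partial(\Omega_1\cap\Omega_2)$ and $B_2'$ equal to the interior portion of $\partial(\Omega_1\cup\Omega_2)$. The nesting $A_1\subset A_2$ guarantees that $B_i'$ has the same mod $2$ boundary as $B_i$, and pointwise the union/intersection boundaries use each point of $\spt B_1\cup\spt B_2$ at most as often as $B_1$ and $B_2$ do, so $\area(B_1')+\area(B_2')\le\area(B_1)+\area(B_2)$; combined with $\area(B_i)\le\area(B_i')$ (minimality of $B_i$) each $B_i'$ is itself area-minimizing. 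At a transverse crossing of the regular sets, however, the tangent cone of $B_i'$ is a pair of half-planes meeting at an angle in $(0,\pi)$, which is not area-minimizing --- the same contradiction you had in mind, but now applied to correctly constructed competitors. If you replace your local-swap step by this global region exchange (or by some other genuinely global sheet exchange), the rest of your argument goes through.
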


\begin{proof}
By the maximum principle, $\spt B_i\setminus \spt \partial B_i$ lies in the interior of $\Nn$.
Let $\Omega_i$ be the region bounded by $A_i$ and $\spt B_i$.
Let $B'_1$ be the portion of $\partial (\Omega_1\cap \Omega_2)$ in the interior of $N$
and let $B'_2$ be the portion of $\partial (\Omega_1\cup \Omega_2)$ in the interior of $N$.
Then $B_i'$ and $B_i$ have the same mod $2$ boundary, so
\[
   \area(B_i)\le \area(B_i').
\]
Also, 
\[
   \area(B_1')+\area(B_2') = \area(B_1') + \area(B_2').
\]
Thus $\area(B_i')=\area(B_i)$ for $i=1,2$, so $B_1'$ and $B_2'$ are area-minimzing.
But at a point where the regular sets of $B_1$ and $B_2$ cross transversely,
the tangent cone to $B_i'$ is a pair of a halfplanes that meet at an angle $\theta\in (0,\pi)$
along their common edge.  But that contradicts the fact that $B_i'$ is area-minimizing.
\end{proof}

In the proof of Theorem~\ref{existence-theorem},
we used the following curvature estimate, which is Theorem A.3 in~\cite{scherkon}:

\begin{theorem}\label{main-curvature-estimate}
There is a constant $C<\infty$ with the following property.
Let $M$ be translator with velocity $-\ee_3$ in $\RR^3$ such that
\begin{enumerate}
\item $M$ is the graph of a smooth function $u:\Omega\to\RR$ on a convex open subset $\Omega$ of $\RR^2$.
\item $\Gamma:=\overline{M}\setminus M$ is a polygonal curve (not necessarily connected)
       consisting of segments, rays, and lines.
\item $\overline{M}$ is a smooth manifold-with-boundary except at the corners of $\Gamma$.
\end{enumerate}
If $p\in \RR^3$, let $r(M,p)$ be the supremum of $r>0$ such that $\BB(p,r)\cap \partial M$ is either empty
or consists of a single line segment, where $\BB(p,r)$ is the Euclidean ball of radius $r$.
Then
\[
   |A(M,p)| \min \{1, r(M,p) \} \le C,
\]
where $|A(M,p)|$ is the norm of the second fundamental form of $M$ at $p$ with respect to the Euclidean metric.
\end{theorem}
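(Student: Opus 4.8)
The plan is a blow-up argument by contradiction, patterned on the classical curvature estimates for stable minimal surfaces. Suppose the assertion fails. Then there are translators $M_i$ satisfying~(1)--(3) and points $p_i$ with
\[
   |A(M_i,p_i)|\,\min\{1,r(M_i,p_i)\} \to \infty .
\]
Using hypothesis~(1) we regard each $M_i$ as the graph of a function $u_i\colon\Omega_i\to\RR$ over a convex domain $\Omega_i$, with polygonal boundary curve $\Gamma_i$. First I would run the standard point-selection lemma (of Choi--Schoen / Ecker--Huisken type): replacing $p_i$ by a nearby $q_i\in M_i$, one arranges that, with $\lambda_i:=|A(M_i,q_i)|$, the rescaled surfaces $\widetilde M_i:=\lambda_i(M_i-q_i)$ satisfy $|A(\widetilde M_i,0)|=1$, have second fundamental form at most $2$ on the intrinsic ball of radius $R_i$ about $0$ with $R_i\to\infty$, and --- crucially, because of the weight $\min\{1,r(\cdot)\}$ --- have the property that the corners of the rescaled boundary $\widetilde\Gamma_i:=\lambda_i(\Gamma_i-q_i)$ recede to infinity. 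Thus on any fixed ball, $\widetilde\Gamma_i$ is eventually empty or a single line segment.

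Next I would pass to the limit. A translator that is a graph over a domain is area-minimizing for the translator metric (the vertical translates of the graph foliate $\Omega_i\times\RR$ by minimal graphs, and the associated unit normal field calibrates the graph), hence stable. Under the rescaling $\widetilde M_i=\lambda_i(M_i-q_i)$ the translator metric converges in $C^\infty_{\mathrm{loc}}$ to a flat metric and the lower-order term of the translator equation scales away, so each $\widetilde M_i$ is an area-minimizer for a metric $\widetilde g_i\to$ (flat). Combining the curvature bound, the standard compactness theory, and --- near a boundary segment, if one survives --- Allard's boundary regularity theorem, a subsequence of $\widetilde M_i$ converges smoothly (up to the boundary) to a limit surface; let $N$ be the component through $0$. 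Being a smooth limit of graphs over convex domains, $N$ is either a vertical plane or a smooth minimal graph over a convex domain, with $\overline N\setminus N$ either empty or contained in a single straight line $\ell$ (the limit of the one boundary segment that survives). In all cases $N$ is minimal and stable in flat $\RR^3$ and $|A(N,0)|=1$, so $N$ is not flat; in particular it is not a vertical plane.

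Finally I would derive the contradiction. If $\overline N\setminus N=\emptyset$, then $N$ is a complete, non-flat, stable minimal surface in $\RR^3$, which is impossible by the theorem of do Carmo--Peng and Fischer-Colbrie--Schoen. If $\overline N\setminus N=\ell$ is a straight line, then $N$ is a minimal graph over a half-plane, and one Schwarz-reflects $N$ across $\ell$ (a $180^\circ$ rotation about $\ell$, an isometry of the flat limit metric) to obtain a complete smooth minimal surface $\widehat N$; it is still stable (minimizing on each side of $\ell$ and matching smoothly across it, equivalently carrying the reflected positive Jacobi field coming from the vertical-translation foliation) and non-flat, again contradicting do Carmo--Peng and Fischer-Colbrie--Schoen.

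The main obstacle is the point-selection step: one must choose $q_i$ so that the blow-up rate is essentially locally maximal \emph{and} so that the factor $\min\{1,r(\cdot)\}$ forces every corner of $\Gamma_i$ to escape to infinity after rescaling, ruling out a blow-up model with a genuine corner. The second delicate point is justifying smooth convergence up to a straight-line boundary and the preservation of stability under the Schwarz reflection; both are standard but require care, since a $180^\circ$ rotation about a non-vertical line is not an isometry of the translator metric (it is, however, an isometry of the flat limit metric, which is all that the argument needs).
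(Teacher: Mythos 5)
The paper does not actually prove this estimate: it is quoted as Theorem A.3 of \cite{scherkon}, so the only comparison available is with the argument given there, and that argument is a blow-up of exactly the kind you describe. One rescales by the curvature at (nearly) maximizing points of the weighted quantity, notes that the weight $\min\{1,r(M,\cdot)\}$ forces every corner and all but one boundary segment out to infinity after rescaling, observes that a translator graph over a convex domain is $g$-area-minimizing (the calibration by vertical translates that you invoke, together with the nearest-point projection onto the convex cylinder, which is distance-nonincreasing for $g$ because the conformal factor depends only on $z$), so the rescaled surfaces are area-minimizing for metrics converging smoothly to the Euclidean metric, and concludes that the limit is a non-flat minimal surface in flat $\RR^3$ whose boundary is empty or a single line --- which is impossible. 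Your outline is essentially that proof; the endgame differs only in flavor: you finish with stability plus do Carmo--Peng/Fischer-Colbrie--Schoen (after Schwarz reflection in the line-boundary case), whereas one can instead use that a limit of area-minimizers is area-minimizing, hence a plane when the boundary is empty and a half-plane when the boundary is a line. Either route is fine; the minimizing route avoids reflection altogether, while yours is more elementary in that it only needs stability theory.

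The one step you should phrase more carefully is the stability of the doubled surface $\widehat N$. ``Minimizing on each side of $\ell$ and matching smoothly across it'' is not a valid general principle (an unstable surface can be cut by a curve into two minimizing halves), and literally ``reflecting'' the Jacobi field is awkward because the even extension need not match to first order along $\ell$. The clean fix is that no reflection of the field is needed: vertical translation is an isometry of the flat limit metric, so $\nu\cdot\ee_3$, for a continuous choice of unit normal $\nu$ on $\widehat N$, is a globally defined Jacobi field on $\widehat N$; it is nonnegative because each open half of $\widehat N$ is a (rotated image of a) graph, so by the strong maximum principle it is either identically zero --- making $\widehat N$ a vertical plane, excluded by $|A|(0)=1$ --- or strictly positive, and a positive Jacobi field implies stability by Fischer-Colbrie--Schoen; the classification of complete stable minimal surfaces then gives the contradiction. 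Note also that $\ell$ may well be vertical (the boundaries $\Gamma$ arising in this paper contain vertical rays), in which case the $180^\circ$ rotation is about a vertical axis; your closing remark that only isometries of the flat limit metric are needed covers this case as well.
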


\section{Uniqueness}\label{uniqueness-section}

\begin{theorem}[Uniqueness Theorem]\label{uniqueness-theorem}
Suppose that
\[
   u, v: K_{a,b}\setminus \partial K_{a,b} \to \RR
\]
are solutions of the translator equation such that
\begin{align*}
u(x,0) &= -\infty \quad\text{for $-a<x<0$,} \\
u(x,0) &= \infty \quad\text{for $0<x<a$,} \\
u(x,b) &\equiv -\infty,
\end{align*}
and such that 
\begin{align*}
   v(x,0) &= -\infty \quad \text{for $-a<x<0$}, \\
   v(x,0) &=  \infty \quad \text{for $0<x<a$}.
\end{align*}
Then $u - v$ is constant.
\end{theorem}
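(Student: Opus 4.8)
The plan is to study the difference $w:=u-v$ by a maximum-principle argument, using barriers to control the three kinds of boundary behaviour of $K_{a,b}$.

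\textit{Linearization.} Write the translator equation over $K_{a,b}$ in divergence form as $Q(u):=\ddiv\!\big(\nabla u/W_u\big)+W_u^{-1}=0$, where $W_u=\sqrt{1+|\nabla u|^2}$ (this is the minimal-surface equation for the metric $g$). The operator $Q$ depends on $u$ only through $\nabla u$ and $D^2u$, so
\[
0=Q(u)-Q(v)=\int_0^1\tfrac{d}{ds}\,Q\big(su+(1-s)v\big)\,ds=L(u-v),
\]
where $L$ is a linear operator, uniformly elliptic on each compact subset of $K_{a,b}\setminus\partial K_{a,b}$, with \emph{no zeroth-order term}. Hence $w$ obeys the strong maximum principle and the Hopf boundary-point lemma on such subsets; in particular $w$ has no interior local maximum or minimum unless it is constant. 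So it suffices to show that $\sup w$ or $\inf w$ is attained at an interior point, or else directly at a boundary point via a boundary maximum principle.

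\textit{Boundary control on $\{y=0\}$.} On the open arcs $A^{+}=\{(x,0):0<x<a\}$ and $A^{-}=\{(x,0):-a<x<0\}$ the functions $u$ and $v$ tend to the \emph{same} infinite value ($+\infty$ on $A^{+}$, $-\infty$ on $A^{-}$). Using translated and tilted grim-reaper translators as upper and lower barriers, exactly as in \cite{scherkon}, together with the curvature estimate Theorem~\ref{main-curvature-estimate}, one shows that $M_u$ and $M_v$ are asymptotic as $z\to\pm\infty$ to the same half-cylinders $A^{\pm}\times\RR$, and that $u-v$ is bounded near $A^{+}\cup A^{-}$ and extends continuously (in fact smoothly) up to these arcs. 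At the two points $p_0=(0,0)$ and $p_1=(a,0)$, where each of $u,v$ jumps from $-\infty$ to $+\infty$, the closures $\overline{M_u}$ and $\overline{M_v}$ each contain the entire vertical line over the point and are smooth up to it (Schwarz reflection), so $w$ has a finite limit and is smooth up to $p_0$ and $p_1$.

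\textit{The arc $Y_b=\{(x,b)\}$ and conclusion.} Here $u\to-\infty$ but the behaviour of $v$ is a priori unknown, and handling this is the main obstacle. First one shows $v\to-\infty$ uniformly as $y\to b^{-}$: if not, then by the Jenkins--Serrin-type boundary dichotomy for the translator equation (each boundary arc of a solution carries either continuous finite data or the constant value $\pm\infty$, again proved by grim-reaper barriers of the form $-\log\sin(b-y)+c$) one would have $u-v\to-\infty$ on some sub-arc of $Y_b$; combined with the previous paragraph this makes $w$ bounded above with $\limsup_{p\to\partial K_{a,b}}w(p)<\sup w$, so $\sup w$ is attained in the interior and $w$ is constant --- impossible, since a constant cannot tend to $-\infty$ along $Y_b$. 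Hence $v\to-\infty$ on $Y_b$, and a further barrier comparison (with $u$ and with the grim reapers $-\log\sin(b-y)+c$) shows $u-v$ is bounded and extends continuously up to $Y_b$. Thus $w$ extends to a bounded continuous function on the compact $K_{a,b}$. If its maximum is interior, $w$ is constant. Otherwise the maximum $S$ is attained at a boundary point $q$; replacing $v$ by $v+S$ gives $u\le v$ with $u$ and $v$ carrying the same infinite data on $A^{+}\cup A^{-}\cup Y_b$ and with the graphs touching (in the limit) at $q$. A boundary maximum principle for the infinite boundary-value problem at the arc through $q$ --- or, if $q\in\{p_0,p_1\}$, the ordinary boundary-point lemma applied to the two smooth minimal surfaces $M_u\le M_v$ sharing the boundary line over the point --- forces $M_u=M_v$, i.e. $w$ is constant. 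The hard part throughout is producing sufficiently sharp barriers for the translator equation near straight segments carrying infinite boundary data: these are needed both to pin down the behaviour of $v$ on $Y_b$ and to obtain the Hopf-type boundary maximum principle at the infinite-value arcs.
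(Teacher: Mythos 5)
Your outline leans on three facts that are never established and at least one of which is false as stated, and these are precisely where the difficulty of the theorem lies. First, the ``Jenkins--Serrin-type boundary dichotomy'' for $v$ on $\{y=b\}$ (finite continuous data or identically $\pm\infty$ on each arc) is not a theorem: a solution on the open strip, with nothing assumed at $y=b$, can have essentially arbitrary limiting behaviour there (tend to $-\infty$ on part of the arc, stay bounded on another part, oscillate), so the case analysis ``either $v\to-\infty$ uniformly or $u-v\to-\infty$ on a sub-arc with $w$ bounded above'' does not exhaust the possibilities; in particular, if $v\to-\infty$ faster than $u$ on part of $\{y=b\}$, then $w=u-v$ is unbounded \emph{above} near that arc and the interior-maximum argument collapses. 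Even granting boundedness, your contradiction branch needs $\limsup_{p\to\partial K}w(p)<\sup w$, which is not available since on $A^{\pm}$ you only claim $w$ is bounded, not that it stays below the interior supremum. Second, the claims that $u-v$ is bounded and extends continuously up to the open arcs where both functions are $+\infty$ (or both $-\infty$), and that there is a ``Hopf-type boundary maximum principle'' at such arcs, are exactly the hard analytic content; the graphs have no boundary points over those arcs (they run off to $z=\pm\infty$), so no off-the-shelf boundary-point lemma applies, and you offer no barrier construction that would prove either statement. What you do justify (smoothness of both graphs up to the vertical lines over $(0,0)$ and $(a,0)$, hence finiteness of $v$ on level sets of $u$) is the content of Remark~\ref{bounded-remark}, but it does not give control on the open arcs.

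The paper's proof is structured to avoid every one of these issues. After adding a constant to $u$ so that $\{u=0\}$ consists of two curves and $u_y$ has a sign on the two regions $\Omega_1$ (near $\{y=0\}$, $-a<x<0$) and $\Omega_2$ (near $\{y=b\}$), one normalizes $v$ by $\inf\{v:u=0\}=0$ (legitimate by Remark~\ref{bounded-remark}). On $\Omega_1$ and $\Omega_2$ the inequality $v\ge u$ is obtained by sliding $\graph(u)$ in the $\ee_2$ direction (Lemmas~\ref{tweedledee} and~\ref{tweedledum}); crucially, this sliding uses only the boundary data of $u$ on the far edge and $u\le v$ on the level curve, so no information about $v$ on $\{y=b\}$ is ever needed. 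On the complementary region, where both functions are bounded below, $v\ge u$ follows from the finite-area comparison of Lemma~\ref{topsy} (via the cut-and-paste Lemma~\ref{cut-paste}). Then the graphs touch at a point of $\overline{\{u=0\}}$, and the strong (or ordinary boundary-point) maximum principle at that genuine point of contact gives $u\equiv v$. If you want to salvage your approach, you would have to actually construct the sharp barriers and prove the asymptotic Hopf-type lemma you invoke; as written, the proposal assumes the conclusion of the hardest steps.
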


Note that there is no assumption about behavior of $v$ on the portion of
the boundary where $y=b$.

\begin{corollary}\label{invariance-corollary}
Let $u$ be as in Theorem~\ref{uniqueness-theorem}.
Then $u$ is invariant under $(x,y)\mapsto (a/2-x,y)$ and under
$(x,y)\mapsto (-a/2-x,y)$.
\end{corollary}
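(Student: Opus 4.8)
The plan is to deduce Corollary~\ref{invariance-corollary} directly from the uniqueness statement in Theorem~\ref{uniqueness-theorem} by exploiting the obvious symmetries of the domain $K_{a,b}$ and of the prescribed boundary data. Recall that $K_{a,b}$ is the rectangle $[-a,a]\times[0,b]$ with the vertical edges $\{x=a\}$ and $\{x=-a\}$ identified, so the map $R\colon (x,y)\mapsto(a/2-x,y)$ is a well-defined isometry of $K_{a,b}$ (in the flat metric on the rectangle, which is all that matters for the translator equation since $z=u$ is the graphing direction). Indeed, $R$ sends the segment $0<x<a$ at height $y=0$ to the segment $-a<x<a/2$, and the segment $-a<x<0$ to $a/2<x<2a\equiv a/2$ going the other way around $K$; a quick check of endpoints shows $R$ swaps the open segment $P=\{(x,0):0<x<a\}$ with the open segment $\{(x,0):-a<x<0\}$, interchanging the two ``prongs'' of the trident while fixing the boundary component $\{y=b\}$ setwise.

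First I would set $w := u\circ R$. Since $R$ is an isometry of $K_{a,b}$ fixing the vertical direction, $w$ is again a solution of the translator equation on $K_{a,b}\setminus\partial K_{a,b}$. Using that $R$ interchanges $P$ with $\{(x,0):-a<x<0\}$, the boundary values of $w$ on $\{y=0\}$ are
\begin{align*}
w(x,0) &= u(R(x,0)) = u(a/2-x,0) = -\infty \quad\text{for $-a<x<0$,}\\
w(x,0) &= u(a/2-x,0) = +\infty \quad\text{for $0<x<a$,}
\end{align*}
and $w(x,b)\equiv -\infty$ since $R$ fixes the edge $y=b$. Thus $w$ satisfies exactly the hypotheses imposed on $u$ in Theorem~\ref{uniqueness-theorem}, and trivially so does $u$; taking $v=w$ (or $v=u$) in the theorem, I conclude that $u-w$ is a constant $c$, i.e.\ $u(x,y) = u(a/2-x,y) + c$ on $K_{a,b}\setminus\partial K_{a,b}$.

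To pin down $c=0$, apply $R$ once more: $R$ is an involution ($R\circ R = \mathrm{id}$), so $u(x,y) = u(a/2-x,y)+c = u(x,y)+2c$, forcing $c=0$. Hence $u$ is invariant under $R\colon (x,y)\mapsto(a/2-x,y)$. The same argument with the isometry $R'\colon (x,y)\mapsto(-a/2-x,y)$ — which also swaps the two prongs $\{0<x<a\}$ and $\{-a<x<0\}$ at height $0$ (as one sees by reducing $-a/2-x$ modulo $2a$) and fixes $\{y=b\}$ — gives invariance under $R'$ as well. The only point requiring any care is the bookkeeping of how the identification $\{x=a\}\sim\{x=-a\}$ interacts with the reflections, i.e.\ verifying that $R$ and $R'$ genuinely interchange the positive and negative prongs rather than, say, fixing them; this is the ``main obstacle,'' though it is entirely elementary once one tracks the endpoints $0$, $a$, and $-a$ carefully. (Alternatively, one could note that $R$ and $R'$ differ by the period translation $(x,y)\mapsto(x-a,y)$, so invariance under one plus periodicity gives invariance under the other.)
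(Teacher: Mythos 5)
Your strategy is exactly right and is what the paper has in mind: pull $u$ back by the reflection, apply the uniqueness theorem to conclude $u$ and $u\circ R$ differ by a constant, then kill the constant using the fact that $R$ is an involution.  The paper does not spell this out, but that is clearly the intended argument.

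However, your verification that $R(x,y)=(a/2-x,y)$ swaps the two prongs is incorrect, and your ``quick check of endpoints'' actually refutes it.  We have $R(0,0)=(a/2,0)$ and $R(a,0)=(-a/2,0)$, so $R$ carries $P=\{(x,0):0<x<a\}$ onto $\{(x,0):-a/2<x<a/2\}$, which straddles the origin and meets \emph{both} prongs.  In particular $(u\circ R)(x,0)=+\infty$ for $0<x<a/2$ but $=-\infty$ for $a/2<x<a$, so $w=u\circ R$ does \emph{not} satisfy the boundary hypotheses of Theorem~\ref{uniqueness-theorem}, and the argument as written does not go through.  Your parenthetical fallback also fails for the same reason: $(a/2-x)$ and $(-a/2-x)$ differ by $a$, not by the period $2a$, so $2a$-periodicity does not transport invariance from one to the other.

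The discrepancy comes from the statement of the corollary itself, which has a slip: the intended maps are the reflections fixing the lines $x=a/2$ and $x=-a/2$, namely $(x,y)\mapsto(a-x,y)$ and $(x,y)\mapsto(-a-x,y)$.  (These are the reflections named just after Theorem~\ref{main-theorem}, and they are what produces $\partial_x u(a/2,y)=0$ and $\partial_x u(-a/2,y)=0$ in the proof of Lemma~\ref{property-lemma}.)  One checks easily that $x\mapsto a-x$ sends $(0,a)$ to $(0,a)$ and sends $(-a,0)$ to $(a,2a)\equiv(-a,0)\pmod{2a}$, so it preserves each prong and fixes $\{y=b\}$; the two maps differ by the full period $2a$, so each follows from the other together with periodicity.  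With this correction, the rest of your proof -- apply Theorem~\ref{uniqueness-theorem} to $u$ and $u\circ R$ to get $u\circ R=u+c$, then use $R\circ R=\mathrm{id}$ to force $2c=0$ -- is correct and complete.
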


Note that the Uniqueness Theorem~\ref{uniqueness-theorem} implies the 
uniqueness of the surface $M_a$ in Theorem~\ref{main-theorem}, and thus
that $M_a$ has the reflectional invariance given by Corollary~\ref{invariance-corollary}.

\begin{proof}[Proof of Theorem~\ref{uniqueness-theorem}]
By adding a large positive constant to $u$, we can assume that:
\begin{enumerate}
\item the zero set $\{u=0\}$
of $u$ is the union 
\[
   \graph(\gamma_1) \cup \graph(\gamma_2)
\]
where 
\begin{align*}
&\gamma_1: (-a,0)\to (0,b) \\
&\gamma_2: [-a,a]\to (0,b)
\end{align*}
are smooth functions such that
\[
    \gamma_1(x)< \gamma_2(x) \quad\text{for $-a<x<0$}
\]
and such that
\[
    \lim_{x\to -a}\gamma_1(x) = \lim_{x\to 0}\gamma_1(x) = 0,
\]
\item
\begin{align*}
  \pdf{u}y &> 0 \quad\text{on $\Omega_1:=\{(x,y): 0<y<\gamma_1(x)\}$}, \\
  \pdf{u}y &< 0 \quad\text{on $\Omega_2:=\{(x,y): \gamma_2(x)<y<b\}$}.
\end{align*}
\end{enumerate}

Thus
\[
   \{u<0\}  = \Omega_1\cup \Omega_2.
\]

Note that $\gamma_1$ extends to a smooth function on $[0,a]$
by setting $\gamma_1(0)=\gamma_2(a)=0$, and that $\gamma_1'(0)$ and $\gamma_2'(a)$
are nonzero.

Now $v$ is bounded on each level set of  $u$.
(See Remark~\ref{bounded-remark} below.)
Thus by adding a constant to $v$, we can assume that
\begin{equation}\label{touching}
  \inf \{ v(x,y): u(x,y)=0\} = 0.
\end{equation}
Since $u$ and $v$ cannot have interior local minima, it follows that
\[
     \text{$u, v\ge 0$ on $(\Omega_1\cup\Omega_2)^c$}.
\]
In particular, the set $\{v\le 0\}$ is contained in $\{u\le 0\}$.

By Lemmas~\ref{tweedledee} and~\ref{tweedledum} below,
\begin{equation}\label{omegas-good}
    v\ge u \quad\text{on $\Omega_1\cup \Omega_2$}.
\end{equation}

Since $u$ and $v$ are bounded below on $(\Omega_1\cup \Omega_2)^c$ and since $v\ge u$
on the boundary of $(\Omega_1\cup\Omega_2)^c$, it follows (see Lemma~\ref{topsy} below)
that $v \ge u$ on $(\Omega_1\cup \Omega_2)^c$.
Combining this with~\eqref{omegas-good}, we see that $v\ge u$ everywhere.
On the other hand, the graphs of $u$ and $v$ touch at some point on $\overline{\{u=0\}}$
by~\eqref{touching}.  Thus $u\equiv v$ by the strong maximum principle or boundary maximum principle.
\end{proof}

\begin{remark}\label{bounded-remark}
Here we explain why $v$ is bounded on each level set of $u$.
Standard boundary regularity results (e.g.,~\cite{hardt-simon}) imply that $\graph(u)$ and
 $\graph(v)$ extend
smoothly (as a manifolds-with-boundary) to the lines $\{(0,0)\}\times\RR$ and $\{(a,0)\}\times\RR$.
Hence there are diffeomorphisms  
\[
  \theta^u, \theta^v : \RR\to (0,\pi) 
\]
such that $(\cos\theta^u(z),\sin\theta^u(z),0)$ and $(\cos\theta^v(z),\sin\theta^v(z),0)$
are tangent to $\graph(u)$ and $\graph(v)$ at $(0,0,z)$.  
Consider points $p_i=(r_i\cos\theta_i, r_i\sin\theta_i)$ converging to $(0,0)$.
Note that $u(p_i)\to c$ if and only if $\theta_i\to \theta^u(c)$ and $v(p_i)\to c$ if and only
if $\theta_i\to \theta^v(c)$.
The analogous statements hold at $(a,0)$.
Boundedness of $v$ on each level set of $u$ follows immediately.
\end{remark}

\begin{lemma}\label{tweedledee}
Let $I$ be a finite open interval and $\gamma: \overline{I}\to \RR$ be a continuous function that is positive on $I$
and that vanishes on the endpoints of $I$.  Let $\Gamma$ be the graph of $\gamma|I$:
\[
 \Gamma   = \{(x,\gamma(x)) : x\in I \},
\]
and let $\Omega$ be the open region between $I\times \{0\}$ and $\Gamma$:
\[
   \Omega = \{(x,y)\in I\times \RR: 0< y < \gamma(x)\}.
\]
Suppose that
\[
  u, v: \Omega\to \RR
\]
are solutions to the translator equation that extend continuously to $\Omega\cup\Gamma$.
Suppose also that
\begin{align*}
\pdf{u}{y}>0 \quad&\text{on $\Omega$}, \\
u \le v \quad&\text{on $\Gamma$, and} \\
u(x,0) = -\infty \quad&\text{for $x\in I$.}
\end{align*}
Then $u\le v$ on $\Omega$.
\end{lemma}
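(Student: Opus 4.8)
The plan is to use a sliding (translation-in-$z$) comparison argument combined with the strong maximum principle, taking advantage of the monotonicity hypothesis $\partial u/\partial y>0$ on $\Omega$ to control what happens on the ``bad'' part of the boundary, namely $I\times\{0\}$ where $u=-\infty$. First I would consider, for $t\ge 0$, the translated function $v_t:=v+t$, which is again a solution of the translator equation on $\Omega$ that extends continuously to $\Omega\cup\Gamma$. The claim is that $u\le v_t$ on $\Omega$ for every $t\ge 0$; letting $t\to 0$ then gives $u\le v$ on $\Omega$, which is the assertion of the lemma. So the work is entirely in establishing $u\le v_t$ for large $t$ and then running a continuity/connectedness argument down to $t=0$.

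For fixed $t$, set $w=w_t:=u-v_t$ and look at the open set $U_t:=\{w>0\}\subset\Omega$; I want to show $U_t$ is empty. Suppose not. On $\Gamma$ we have $u\le v\le v_t$, so $\overline{U_t}$ does not meet $\Gamma$. Near $I\times\{0\}$, since $u(x,0)=-\infty$ and $v$ extends continuously to a finite value on $\Gamma$ but a priori not to $I\times\{0\}$, I need the hypothesis $\partial u/\partial y>0$: it forces $u$ to be genuinely small (close to $-\infty$) in a one-sided neighborhood of $I\times\{0\}$ inside $\Omega$, because along each vertical segment $\{x\}\times(0,\gamma(x))$ the function $u$ is increasing and tends to $-\infty$ at the bottom. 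Meanwhile $v_t$ is a solution on $\Omega$ whose behaviour near $I\times\{0\}$ is at worst comparable to a grim-reaper barrier, so $v_t$ cannot go to $-\infty$ as fast as $u$; hence $w<0$ in a neighborhood of $I\times\{0\}$ in $\Omega$. Therefore $\overline{U_t}\cap\Omega$ is a compact subset of $\Omega$ on which $w$ attains a positive interior maximum. Since $u$ and $v_t$ both satisfy the (quasilinear, locally uniformly elliptic) translator equation, the difference $w$ satisfies a linear elliptic equation with no zeroth-order term on $\Omega$, so by the strong maximum principle $w$ cannot have an interior positive maximum unless it is constant on the component; but a positive constant is incompatible with $w<0$ near $I\times\{0\}$. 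This contradiction shows $U_t=\emptyset$, i.e. $u\le v_t$.

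The remaining point is that this already works for \emph{all} $t\ge 0$, including $t=0$: the argument above never used that $t$ is large. So in fact $u\le v$ on $\Omega$ directly, once I have the boundary-behaviour comparison near $I\times\{0\}$. The cleanest route is: first prove the statement for $t>0$ (where one has the strict gap $v_t=v+t>v\ge u$ on $\Gamma$, so the maximum principle argument is completely standard away from $I\times\{0\}$), and then let $t\downarrow 0$. Both formulations reduce to the same core estimate.

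The main obstacle is making rigorous the assertion that $u$ ``goes to $-\infty$ faster'' than $v_t$ near $I\times\{0\}$, i.e. that $\overline{\{u>v_t\}}$ stays away from $I\times\{0\}$. The hypothesis $u(x,0)=-\infty$ together with $\partial u/\partial y>0$ gives that for each $x\in I$ and each $M<0$ there is $\delta(x)>0$ with $u<M$ on $\{x\}\times(0,\delta(x))$; one needs this to hold with a $\delta$ that is locally uniform in $x$, which follows from continuity of $u$ on $\Omega$ and, at the two endpoints of $I$ (which are corners of $\Gamma$), from the smooth boundary regularity recorded in Remark~\ref{bounded-remark} (equivalently, the angle-function description there). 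Simultaneously one must bound $v_t$ from below near $I\times\{0\}$; this is exactly where one inserts a grim reaper surface as a barrier from below for $v$ on a thin slab $\{0<y<\eta\}$, just as in the arguments already used in Section~\ref{existence-section}. Once these two one-sided estimates are in hand, the comparison $w<0$ near $I\times\{0\}$ is immediate and the strong maximum principle finishes the proof.
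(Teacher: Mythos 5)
There is a genuine gap, and it sits exactly where you flag it: the claim that $\{u>v+t\}$ stays away from $I\times\{0\}$ because ``$v$ cannot go to $-\infty$ as fast as $u$'' near the bottom edge. Nothing in the hypotheses controls $v$ on $I\times\{0\}$ at all, and in the very application the lemma is made for (the Uniqueness Theorem, applied on $\Omega_1$), $v$ also tends to $-\infty$ on that same edge. Your proposed fix is to put a grim-reaper-type barrier \emph{below} $v$ on a thin slab $\{0<y<\eta\}$, but such a comparison requires knowing $v\ge$ (barrier) on the whole boundary of the slab, including the bottom edge where $v$ is a priori unbounded below --- so the barrier argument is circular. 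Worse, even if you had such a lower bound for $v$, the same statement would apply to $u$ (it is also a solution), so both functions would blow down at comparable logarithmic-type rates with unknown constants, and you still could not conclude $u<v+t$ near $I\times\{0\}$. In effect, the inequality you need near the bottom edge is (a quantitative form of) the conclusion of the lemma itself; establishing a relative blow-down comparison for two solutions that are both $-\infty$ on the same segment is a nontrivial Jenkins--Serrin-type fact, not something a single barrier provides. (A secondary issue: your appeal to Remark~\ref{bounded-remark} at the corners is outside the lemma's hypotheses, which say nothing about boundary lines or smooth extension at the endpoints of $I$.)

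The paper's proof avoids this entirely by sliding \emph{horizontally} rather than vertically: one considers $\graph(u)+s\,\ee_2$ for $s>0$ and takes the maximal $s$ at which it still meets $\graph(v)$. Because of the shift, the ``cliff'' where the slid copy of $\graph(u)$ plunges to $-\infty$ lies over the line $\{y=s\}$, which consists of \emph{interior} points of $\Omega$ where $v$ is finite, so no contact can occur near the bad edge --- no rate comparison is ever needed; and the top edge of the slid graph lies over points with $y>\gamma(x)$, hence outside $\overline\Omega$, so the first contact is interior and tangential, and the strong maximum principle plus unique continuation gives a contradiction. If you want to rescue a vertical-translation scheme, you would first have to prove a boundedness result for $u-v$ near $I\times\{0\}$ (in the spirit of Jenkins--Serrin flux arguments), which is substantially more work than the horizontal sliding; as written, your argument does not close.
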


\begin{proof}
Suppose not.  Then there would be a maximum $s>0$ such that
\[
    \graph(u) + s\ee_2
\]
intersects $\graph(v)$.  In a neighborhood of a point $p$ of contact,
$\graph(v)$ would lie in the closed region above $\graph(u)+s\ee_2$,
and the two surfaces would be tangent at $p$.  But then by the strong maximum
principle and unique continuation, $\graph(u)+s\ee_2$
and $\graph(v)$ would coincide, which is clearly impossible.
\end{proof}

Lemma~\ref{tweedledee} has an analog for periodic functions: 

\begin{lemma}\label{tweedledum}
Let $\gamma: \RR \to \RR$ be a continuous function that is 
everywhere $>0$ and that is periodic with period $L$.
 Let $\Gamma=\{(x,\gamma(x): x\in \RR\}$
be the graph of $\gamma$, 
and let
\[
   \Omega = \{(x,y): 0< y < \gamma(x)\}.
\]
Suppose that
\[
  u, v: \Omega\to \RR
\]
are solutions to the translator equation that are periodic with period $(L,0)$ and 
that extend continuously to $\Omega\cup\Gamma$.
Suppose also that
\begin{gather*}
\pdf{u}{y}>0 \quad\text{on $\Omega$}, \\
u \le v \quad\text{on $\Gamma$, and} \\
u(x,0) = -\infty \quad\text{for all $x$.}
\end{gather*}
Then $u\le v$ on $\Omega$.
\end{lemma}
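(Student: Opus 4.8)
The plan is to repeat the argument proving Lemma~\ref{tweedledee} almost verbatim, with the compactness that there came from finiteness of the interval $I$ now supplied by periodicity. Passing to the quotient of $\Omega$ by the translation $(x,y)\mapsto(x+L,y)$ turns $\overline{\Omega}$ into a compact cylindrical region (it does not pinch, since $\gamma>0$) whose bottom circle $\{y=0\}$ is where $u=-\infty$ and whose top curve $\Gamma$ is where $u$ extends continuously with $u\le v$; on this quotient I would run the sliding-and-touching argument.

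Concretely, I would suppose for contradiction that $u(x_0,y_0)>v(x_0,y_0)$ for some $(x_0,y_0)\in\Omega$, and for $s\ge 0$ consider the vertical translate $\graph(u)+s\,\ee_2$, which is the graph of $w_s(x,y):=u(x,y-s)$ over $\Omega_s:=\{(x,y):s<y<\gamma(x)+s\}$; on the overlap $\Omega\cap\Omega_s=\{(x,y):s<y<\gamma(x)\}$ I would compare $w_s$ with $v$. The hypothesis $\partial u/\partial y>0$ gives the two facts I expect to use repeatedly: for $s>0$, $w_s(x,y)=u(x,y-s)<u(x,y)$, so near $\Gamma$ (staying a fixed distance inside) $w_s$ is strictly below $u(x,\gamma(x))\le v(x,\gamma(x))$; and $w_s(x,y)\to-\infty$ as $y\to s^{+}$, since $u(x,0)=-\infty$. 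I would then let $\bar s$ be the supremum of those $s\ge 0$ for which $w_s>v$ somewhere on $\Omega\cap\Omega_s$. Because $u(x_0,y_0)>v(x_0,y_0)$ and $w_s(x_0,y_0)=u(x_0,y_0-s)\to u(x_0,y_0)$ as $s\to 0$, small positive $s$ qualify, so $\bar s>0$; because $\Omega$ lies in the slab $\{0\le y\le\max\gamma\}$ with $\max\gamma<\infty$ by periodicity, the overlap is empty for $s>\max\gamma$, so $\bar s<\infty$.

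The step I expect to be the main obstacle is showing that $\bar s$ is attained at an \emph{interior} contact point. Here I would take $s_k\to\bar s$ and $p_k=(x_k,y_k)\in\Omega\cap\Omega_{s_k}$ with $w_{s_k}(p_k)>v(p_k)$; periodicity lets me take $x_k\in[0,L]$, and then $y_k\in(s_k,\gamma(x_k))$ is bounded, so after a subsequence $p_k\to p=(x_*,y_*)$ with $\bar s\le y_*\le\gamma(x_*)$. The case $y_*=\bar s$ is excluded because then $w_{s_k}(p_k)=u(x_k,y_k-s_k)\to-\infty$ while $v(p_k)\to v(x_*,\bar s)$ is finite (as $\bar s>0$); the case $y_*=\gamma(x_*)$ with $y_*>\bar s$ is excluded because the limiting inequality $w_{\bar s}(p)\ge v(p)$ would force $u(x_*,\gamma(x_*)-\bar s)\ge v(x_*,\gamma(x_*))$, contradicting $u(x_*,\gamma(x_*)-\bar s)<u(x_*,\gamma(x_*))\le v(x_*,\gamma(x_*))$. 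Hence $p\in\Omega\cap\Omega_{\bar s}$; by continuity $w_{\bar s}(p)\ge v(p)$, while for every $s>\bar s$ one has $w_s\le v$ on $\Omega\cap\Omega_s$ by the definition of $\bar s$, and letting $s\downarrow\bar s$ gives $w_{\bar s}\le v$ on $\Omega\cap\Omega_{\bar s}$; therefore $w_{\bar s}(p)=v(p)$.

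The endgame would then be identical to Lemma~\ref{tweedledee}: both $\graph(v)$ and $\graph(u)+\bar s\,\ee_2$ solve the translator equation, $\graph(v)$ lies above $\graph(u)+\bar s\,\ee_2$ near the interior contact point over $p$, and they are tangent there, so the strong maximum principle and unique continuation force $w_{\bar s}\equiv v$ on the connected component $W$ of $\Omega\cap\Omega_{\bar s}$ containing $p$. But $\overline{W}$ contains a nonempty arc of the floor $\{y=\bar s\}$, along which $w_{\bar s}(x,\bar s)=u(x,0)=-\infty$ while $v(x,\bar s)$ is finite (since $0<\bar s<\gamma(x)$ there); that is absurd, and so $u\le v$ on $\Omega$. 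Everything except the attainment and interiority of $\bar s$ is formal: periodicity supplies the compactness in $x$, and the two hypotheses $u(x,0)=-\infty$ and $\partial u/\partial y>0$ are exactly what keep the limiting contact point from escaping to the bottom edge or to $\Gamma$, respectively.
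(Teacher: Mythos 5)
Your proof is correct and is essentially the paper's own argument: the paper proves Lemma~\ref{tweedledum} by the same sliding of $\graph(u)$ in the $\ee_2$-direction used for Lemma~\ref{tweedledee}, with periodicity supplying the compactness, exactly as you do. You have simply written out the details the paper leaves implicit (existence and interior attainment of the maximal shift $\bar s$, excluded boundary cases via $u(\cdot,0)=-\infty$ and $\partial u/\partial y>0$), and these details are handled correctly.
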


The proof is almost identical to the proof of Lemma~\ref{tweedledee}.

\begin{lemma}\label{topsy}
Suppose that  $\Omega$ be a domain in a Riemannian $2$-manifold 
with piecewise smooth boundary of finite length.
Let $u,v:\Omega\to\RR$ be solutions of the translator equation that are bounded below.
If $u\le v$ on $\partial \Omega$, then $u\le v$ on $\Omega$.
\end{lemma}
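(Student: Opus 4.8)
The plan is to prove the lemma by a maximum-principle comparison, in the spirit of Lemmas~\ref{tweedledee} and~\ref{tweedledum}. First I would reduce to a linear equation. The translator equation can be written in divergence form $\ddiv\, a(\nabla w) + b(\nabla w) = 0$, with $a$ (a vector field whose derivative is positive definite) and $b$ depending only on the gradient and \emph{not} on $w$ itself; so, subtracting the equations for $u$ and $v$ and integrating the derivatives of $a$ and $b$ along the segment from $\nabla v$ to $\nabla u$, the difference $w:=u-v$ solves a homogeneous linear elliptic equation
\[
  \ddiv\big(A(x)\,\nabla w\big) + B(x)\cdot\nabla w = 0 \qquad\text{on }\Omega ,
\]
with $A$ symmetric positive definite, $B$ locally bounded, and --- crucially --- no zeroth-order term, so both the weak and the strong maximum principle apply to $w$ with no sign hypotheses. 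Equivalently, since a vertical translate of a translator is again a translator, one may argue directly with the surfaces $\graph(u)-s\,\ee_3$ and $\graph(v)$. We may assume $\Omega$ connected.

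Next I would argue by contradiction, supposing $m:=\sup_\Omega w>0$ (possibly $m=+\infty$). If $m<\infty$ and is attained at an interior point $p$, then $\graph(u)-m\,\ee_3$ lies weakly below $\graph(v)$ and touches it over $p$; by the strong maximum principle and unique continuation the two surfaces coincide, i.e.\ $w\equiv m$ on the connected set $\Omega$, which contradicts $u\le v$ on $\partial\Omega$. Since $w$ is locally bounded (being a solution of a linear elliptic equation), the remaining possibility --- whether $m$ is finite or infinite --- is that there is a sequence $p_i\in\Omega$ with $w(p_i)\to m$ whose accumulation points in the closure of $\Omega$ all lie on $\partial\Omega$, or else $p_i$ escapes every compact subset of $\overline\Omega$.

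The last step is to rule this out. If $p_i$ accumulates at a point $p_\infty$ of the piecewise-smooth boundary near which $\graph(u)$ and $\graph(v)$ are well-behaved, then $m=\lim w(p_i)\le\limsup_{\Omega\ni q\to p_\infty}(u-v)(q)\le 0$ by hypothesis, a contradiction. What remains --- and this is the main obstacle --- is the behaviour of $w$ near the finitely many corners of $\partial\Omega$ (and, if $\Omega$ is not relatively compact, near infinity), where $u$ and $v$ need not individually be bounded and $w$ is \emph{a priori} indeterminate; here one has only the one-sided bounds on $u$ and $v$ and the \emph{finiteness} of the length of $\partial\Omega$. The plan for this is an exhaustion argument: take relatively compact $\Omega_j\uparrow\Omega$ with $\partial\Omega_j\to\partial\Omega$, apply the weak maximum principle on each $\Omega_j$ to reduce to bounding $\sup_{\partial\Omega_j}w$, and use the finite length of $\partial\Omega$ together with the lower bounds to show that for each $\eps>0$ the portion of $\partial\Omega_j$ on which $w>\eps$ shrinks away as $j\to\infty$; then $\sup_{\Omega_j}w\le\eps$ for all large $j$, and letting $\eps\to 0$ yields $w\le 0$ on $\Omega$. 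Making ``$u\le v$ on $\partial\Omega$'' effective near these bad points from only one-sided bounds and finite total length is where the real work lies; everything else is the standard elliptic maximum principle.
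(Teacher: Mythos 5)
Your overall strategy (linearize the difference, strong maximum principle for interior touching, then fight the boundary/infinity behaviour) is genuinely different from the paper's, but the step you yourself flag as ``where the real work lies'' does not go through as sketched, and it is precisely the content of the lemma. Two problems. First, the final inference is a non sequitur: the weak maximum principle on $\Omega_j$ gives $\sup_{\Omega_j}w\le\sup_{\partial\Omega_j}w^+$, a \emph{pointwise} bound over the whole of $\partial\Omega_j$; knowing only that the \emph{length} of $\{w>\eps\}\cap\partial\Omega_j$ is small tells you nothing about $\sup_{\partial\Omega_j}w$, since $w$ may be huge (a priori unbounded) on an arbitrarily short arc near a corner of $\partial\Omega$ or far out in a noncompact end. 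To convert a small-measure bad set into an interior bound you would need harmonic-measure/capacity or barrier estimates for the linearized operator, and those are not available here because $w=u-v$ is not even known to be bounded near the corners (only $u$ and $v$ are bounded \emph{below}; either may tend to $+\infty$ along $\partial\Omega$). Second, the measure-smallness claim itself is unsubstantiated: nothing in ``$u,v$ bounded below plus finite length of $\partial\Omega$'' controls where $u-v>\eps$ on the curves $\partial\Omega_j$; near a boundary point where $u\to+\infty$ the quantity $u-v$ on nearby interior arcs is exactly what you cannot estimate. So the exhaustion scheme, as stated, cannot be completed without a substantial new ingredient.

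For contrast, the paper sidesteps all pointwise boundary analysis by a variational argument: after translating so that $u,v\ge0$, it shows that $\graph(u)$ and $\graph(v)$ have finite area in the translator (Ilmanen) metric, bounded by the length of $\partial\Omega$ --- this uses the fact that a translator graph is $g$-area-minimizing (its vertical translates foliate the cylinder over $\Omega$), compared against the competitor $\partial\Omega\times[0,\infty)$, whose $g$-area equals the length of $\partial\Omega$ as in Lemma~\ref{prelim-lemma}. With finite area in hand, both graphs are area-minimizing flat chains mod $2$, and the cut-and-paste principle (Lemma~\ref{cut-paste}) forbids the two graphs from crossing transversely; hence $u\le v$. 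This is why the hypotheses are exactly ``bounded below'' and ``finite boundary length'': they are what make the calibration/area bound work, not ingredients for a barrier construction. If you want to salvage a purely PDE proof, you would need to supply genuine barriers or a Phragm\'en--Lindel\"of-type argument at the corners and at infinity, which is considerably more delicate than the exhaustion you describe.
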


\begin{proof}
By translating, we can assume that $u$ and $v$ are bounded below by $0$.
Note that for any compact region $\overline{W}$ in $\overline{\Omega}\times\RR$,
\[
    \area(\graph(u)\cap W) \le \area((\partial W)\cap \{z>u\}).
\]
Applying this to $W_n=\overline{\Omega_n}\times [0,n]$
where $\Omega_n$ is a nice exhaustion of $\Omega$, and taking the limit, we
see that
\begin{align*}
\area(\graph(u)) 
&\le \area(\partial \Omega\times [0,\infty)) \\
&= \length(\partial \Omega)  \\
&<\infty.
\end{align*}

Likewise, $\area(\graph(v))<\infty$.
Hence $u\le v$ by the cut-and-paste argument (Lemma~\ref{cut-paste}).
\end{proof}

\section{A Geometric Property of Tridents}\label{property-section}

\begin{theorem}\label{property-theorem}
Let $M_a$ and $u_a: \RR\times (0,b(a))\to \RR$ be as in Theorem~\ref{existence-theorem}.
Then
\begin{align*}
   \pdf{}x u_a(x,y)&>0 \quad\text{for $-a/2<x<a/2$}, \\
   \frac{\partial^2}{\partial x^2} u_a(-a/2,y) &> 0, \\
   \frac{\partial^2}{\partial x^2}u_a(a/2,y) &< 0
\end{align*}
for $0<y<b(a)$.
\end{theorem}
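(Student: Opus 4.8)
\emph{The plan.}
I will deduce the second and third inequalities from the first. Put $\psi=\partial u_a/\partial x$. Differentiating the graphical translator equation in $x$ shows $\psi$ solves a homogeneous, uniformly elliptic linear equation $L\psi=0$ with no zeroth order term. By Corollary~\ref{invariance-corollary}, $u_a$ is invariant under reflection in each of the planes $x=a/2$ and $x=-a/2$, so $u_a$ extends smoothly and evenly across them; hence $\psi$ vanishes identically on the curves $x=\pm a/2$ and is smooth and odd about them. Granting the first inequality, $\psi>0$ on $\Omega:=(-a/2,a/2)\times(0,b)$, the Hopf boundary point lemma applied to $\psi$ at those curves gives $\psi_x(-a/2,\cdot)>0$ and $\psi_x(a/2,\cdot)<0$, i.e. $\partial^2 u_a/\partial x^2>0$ at $x=-a/2$ and $<0$ at $x=a/2$. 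So it suffices to prove $\psi>0$ on $\Omega$.

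\emph{Moving planes.}
I prove $\psi>0$ by Alexandrov reflection, using the two symmetry planes $x=\pm a/2$ of $M_a$. For $\lambda\in\RR$ set $w_\lambda(x,y)=u_a(x,y)-u_a(2\lambda-x,y)$. Since reflection in $\{x=\lambda\}$ is an isometry of $\RR^3$ with the translator metric, $(x,y)\mapsto u_a(2\lambda-x,y)$ is again a solution of the translator equation, so $w_\lambda$ satisfies a linear elliptic equation with no zeroth order term; moreover $w_\lambda\equiv0$ on $\{x=\lambda\}$, and $w_{a/2}\equiv0$ by the reflection symmetry. The goal is to show $w_\lambda\ge0$ on the half-period strip $E_\lambda:=(\lambda,\lambda+a)\times(0,b)$ for every $\lambda\in[0,a/2]$. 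Differentiating at $x=\lambda$ then gives $\psi(\lambda,\cdot)\ge0$ for $\lambda\in(0,a/2)$; the mirror sweep (decreasing $\lambda$ from $0$, anchored at the symmetry plane $x=-a/2$) gives $\psi(\lambda,\cdot)\ge0$ for $\lambda\in(-a/2,0)$; hence $\psi\ge0$ on $\Omega$, and the strong maximum principle (with $\psi\not\equiv0$, verified below) upgrades this to $\psi>0$.

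\emph{Starting the sweep and the corner analysis.}
At $\lambda=0$ we have, on $\partial E_0=\partial\big((0,a)\times(0,b)\big)$: $w_0=0$ on $\{x=0\}\cup\{x=a\}$; $w_0\to+\infty$ along $(0,a)\times\{0\}$; $w_0\to0$ along $(0,a)\times\{b\}$, since $M_a$ is smoothly asymptotic there to a vertical plane; and $w_0>0$ near the corners $(0,0)$ and $(a,0)$. The last point comes from the local structure of $M_a$ near its boundary line $\{(0,0)\}\times\RR$: because $M_a$ is invariant under $180^\circ$ rotation about that line and is tangent to $\{x=0\}$ at the origin, near the origin $M_a=\{(\phi(\eta,\zeta),\eta,\zeta)\}$ with $\phi$ odd in $\eta$ and $\nabla\phi(0,0)=0$, so $\phi(\eta,\zeta)=\beta\,\eta\zeta+O\big(|\eta|(\eta^2+\zeta^2)\big)$ and hence $u_a(x,y)=\tfrac{x}{\beta y}\,(1+o(1))$ for small $(x,y)$ with $y>0$ and $x/y$ bounded; the prescribed values $u_a=+\infty$ on $(0,a)\times\{0\}$ and $u_a=-\infty$ on $(-a,0)\times\{0\}$ force $\beta>0$, so $w_0(x,y)\approx\tfrac{2x}{\beta y}>0$ near $(0,0)$ (and $\to+\infty$ as $y\to0$ with $x>0$ fixed), and similarly near $(a,0)$. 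Since also $w_0$ is bounded below on $E_0$, the minimum principle (applied on an exhaustion of $E_0$) gives $w_0\ge0$, whence $w_0>0$ in the interior of $E_0$ by the strong maximum principle; in particular $\psi\not\equiv0$. For general $\lambda\in(0,a/2)$ the bottom edge of $E_\lambda$ splits into a part where the two terms of $w_\lambda$ carry opposite infinite boundary values (there $w_\lambda\to+\infty$) and a part where they carry the same infinite value (there $w_\lambda$ is bounded, with limiting boundary value $\ge0$ by the $y=0$ asymptotics); the top edge and the corners are handled as for $\lambda=0$. With this boundary control, the usual continuation argument runs: the set of $\lambda\in[0,a/2]$ for which $w_{\lambda'}\ge0$ on $E_{\lambda'}$ for all $\lambda'\le\lambda$ is nonempty, closed, and open in $[0,a/2)$---openness by the strong maximum principle and the Hopf lemma at $\{x=\bar\lambda\}$ together with a compactness argument, using that $w_{\bar\lambda}\not\equiv0$ because $x=\bar\lambda$ is not a symmetry plane of $M_a$ for $\bar\lambda\in(0,a/2)$---so it equals $[0,a/2]$. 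This completes the proof of $\psi>0$, and with it the theorem.

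\emph{The main obstacle.}
I expect the technical heart to be exactly this boundary control for the $w_\lambda$ on the non-compact ends of $E_\lambda$, where $u_a\to\pm\infty$ and the coefficients of the equation for $w_\lambda$ degenerate, and near the boundary lines. It rests on (i) the local analysis at the boundary lines sketched above (via the $180^\circ$ rotational symmetry) and (ii) sharp asymptotics of $M_a$ at its ends, where $M_a$ is smoothly asymptotic to the vertical planes $\{y=0\}\times\RR$ and $\{y=b\}\times\RR$; the most delicate point is the end $y=b$, where one must show $\partial u_a/\partial x$ does not become negative---equivalently, that the slowest-decaying mode of the deviation of $M_a$ from an $x$-independent profile there has the correct sign.
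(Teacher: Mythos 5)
The reduction of the second and third inequalities to the first is fine and is in fact how the paper finishes: $\psi=\partial u_a/\partial x$ satisfies a linear elliptic equation with no zeroth-order term, vanishes on $x=\pm a/2$ by Corollary~\ref{invariance-corollary}, and the strong maximum principle plus the Hopf boundary lemma give strictness. The gap is in your proof of $\psi\ge 0$. Your moving-plane sweep requires $w_\lambda\ge 0$ on a strip whose boundary contains ends where $u_a\to\pm\infty$, and your control of $w_\lambda$ there is asserted rather than proved. In particular, the claim that $w_0\to 0$ along the top edge $y=b(a)$ ``since $M_a$ is smoothly asymptotic there to a vertical plane'' does not follow: smooth convergence of $M_a-(0,0,\lambda)$ to $N\times\RR$ as $\lambda\to\infty$ says nothing about the difference $u_a(x,y)-u_a(-x,y)$ as $y\to b(a)$. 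If the end is modeled by $y=b-e^{z}h(x)+\dots$, that difference tends to $\log\bigl(h(-x)/h(x)\bigr)$, which is bounded but has no a priori sign; the sign of the slowest-decaying $x$-dependent mode at that end is exactly what you would need, and, as you concede in your closing paragraph, you have not established it. The same problem occurs on the portions of the bottom edge of $E_\lambda$ (for $0<\lambda<a/2$) where both $u_a(x,0)$ and $u_a(2\lambda-x,0)$ are $+\infty$ (or both $-\infty$): ``bounded with limiting boundary value $\ge 0$'' is the desired conclusion, not something you have shown. Since the infimum of $w_\lambda$ could be negative and attained only in the limit at these ends, the minimum principle and the open/closed continuation do not close, so the central step of the argument is missing.

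For comparison, the paper avoids all asymptotic analysis by proving monotonicity first for compact Dirichlet problems (Lemma~\ref{property-lemma}): $2a$-periodic solutions on $\RR\times(0,b)$ with boundary value $0$ on $y=b$ and finite, symmetric, monotone data $f$ on $y=0$. There $v=\partial u/\partial x$ has completely controlled boundary values ($v=0$ on $x=\pm a/2$ by symmetry and on $y=b$, $v=f'\ge 0$ on $y=0$), so the maximum principle gives $v\ge 0$ outright. Letting $f^i$ increase to the singular boundary data gives $\partial_x u_{a,b}\ge 0$ for the least-area graphs of Proposition~\ref{first-existence-proposition}, and letting $b_i\uparrow b(a)$ (as in the construction of $M_a$ in Theorem~\ref{existence-theorem}) gives $\partial_x u_a\ge 0$; strictness then follows exactly as in your first paragraph. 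If you wish to salvage the moving-plane route, you would first have to derive sharp decay asymptotics for $M_a$ at the $y=b(a)$ end and near the segments of $y=0$, which is substantially more work than the approximation argument.
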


\begin{lemma}\label{property-lemma}
If $a>0$, if $0<b<\pi$, and if $f:\RR\to\RR$ is a smooth, $2a$-periodic function,
then there is a unique $2a$-periodic solution
\[
    u: \RR\times (0,b)\to\RR
\]
to the translator equation with boundary values $u(\cdot,b)\equiv 0$ and $u(\cdot,0)=f(\cdot)$.
Furthermore, if $f$ is invariant under $x\mapsto a/2-x$ and under $x\mapsto -a/2-x$ and if 
\[
    f'(x)\ge 0 \quad\text{for $-a/2\le x \le a/2$},
\]
then
\[
    \pdf{}x u(x,y) \ge 0 \quad \text{for $-a/2\le x \le a/2$}.
\]
\end{lemma}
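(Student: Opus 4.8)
The plan is to treat the two assertions in turn: first solve the Dirichlet problem on the compact cylinder $K_{a,b}$, and then read off the sign of $u_x$ from the uniqueness together with the reflection symmetry of $u$.

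For existence and uniqueness I would reduce to a Dirichlet problem on the compact flat cylinder $K_{a,b}$ of Section~\ref{existence-section}: namely, to find $u\colon K_{a,b}\to\RR$, smooth up to the boundary, solving the translator equation
\[
  \ddiv\!\left(\frac{\nabla u}{\sqrt{1+|\nabla u|^{2}}}\right)=-\frac{1}{\sqrt{1+|\nabla u|^{2}}}
\]
with $u=f$ on $\{y=0\}$ and $u=0$ on $\{y=b\}$; the $2a$-periodicity is built into the cylinder. This is a uniformly elliptic quasilinear equation once $|\nabla u|$ is controlled, so solvability follows from the continuity method (or Leray--Schauder) as soon as one has a priori $C^{1}$ estimates. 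For the $C^{0}$ estimate I would use grim reaper barriers: since $b<\pi$, one may pick $c\in(0,\pi-b)$, and then $(x,y)\mapsto\log\sin(y+c)$ is a smooth bounded solution of the translator equation on $K_{a,b}$; adding a large positive (resp.\ negative) constant produces a supersolution above (resp.\ a subsolution below) the prescribed boundary data, and the comparison principle traps $u$ between them. For the boundary gradient estimate I would build barriers near the two straight boundary circles $\{y=0\}$ and $\{y=b\}$ from tilted and translated grim reaper surfaces; the interior gradient estimate then follows from the standard interior gradient bound for graphs of bounded mean curvature, and Schauder theory upgrades the solution to $C^{\infty}$ up to the boundary since the data and the boundary are smooth. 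Uniqueness is the easy half: if $u_{1},u_{2}$ are two solutions, then the difference of the translator operator applied to $u_{1}$ and to $u_{2}$ can be written as a linear, uniformly elliptic operator (with bounded coefficients and no zeroth-order term) applied to $u_{1}-u_{2}$; since $u_{1}-u_{2}$ vanishes on $\partial K_{a,b}$, the maximum principle forces $u_{1}\equiv u_{2}$. (One could equally slide $\graph(u_2)$ vertically as in the proof of Lemma~\ref{tweedledee}.)

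For the monotonicity, suppose now that $f$ is invariant under $x\mapsto a-x$ (equivalently, modulo the period, under $x\mapsto -a-x$; cf.\ Corollary~\ref{invariance-corollary}) and that $f'\ge0$ on $[-a/2,a/2]$. Because the map $(x,y)\mapsto(a-x,y)$ preserves the translator equation, the period $2a$, and --- using the symmetry of $f$ and the fact that $u(\cdot,b)\equiv0$ --- the boundary data, the function $(x,y)\mapsto u(a-x,y)$ is again a solution of the Dirichlet problem; by the uniqueness just established it equals $u$, so $u(a-x,y)=u(x,y)$. Differentiating at the two fixed points $x=a/2$ and $x=-a/2$ of this reflection gives $u_{x}(\pm a/2,y)=0$ for all $y\in[0,b]$. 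Next, differentiating the translator equation with respect to $x$ shows that $w:=u_{x}$ satisfies a linear, uniformly elliptic equation $\mathcal{L}w=0$ on $K_{a,b}$ with smooth coefficients and, crucially, no zeroth-order term. I would then work on the rectangle $\Omega^{*}=(-a/2,a/2)\times(0,b)$ (one half of the cylinder, bounded by the fixed lines of the reflection), on whose boundary $w=0$ on the vertical sides $\{x=\pm a/2\}$ (just shown), $w=u_{x}(\cdot,b)\equiv0$ on $\{y=b\}$, and $w=u_{x}(\cdot,0)=f'\ge0$ on the bottom edge by hypothesis; since the corners $(\pm a/2,0)$, $(\pm a/2,b)$ lie in the interior of edges of $K_{a,b}$, the solution $u$, and hence $w$, is smooth across them, so these boundary values are attained continuously. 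As $w\ge0$ on all of $\partial\Omega^{*}$ and $\mathcal{L}w=0$ has no zeroth-order term, the weak maximum principle gives $w\ge0$ on $\overline{\Omega^{*}}$, which is precisely $\pdf{}{x}u\ge0$ for $-a/2\le x\le a/2$, $0\le y\le b$.

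The hard part is the existence half, and specifically the a priori gradient bounds: the $C^{0}$ bound and the boundary gradient bound are exactly where the hypothesis $b<\pi$ is used, since that is what makes the grim reaper comparison functions smooth and finite on the strip, while the interior gradient estimate and the regularity theory are routine. The monotonicity half is short once uniqueness is available; the only things needing care there are the bookkeeping of the four boundary inequalities for $w=u_{x}$ and the (harmless) fact that the corners of $\Omega^{*}$ are not corners of $K_{a,b}$.
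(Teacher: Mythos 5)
Your proposal is correct and takes essentially the same route as the paper: the paper likewise treats existence and uniqueness as standard (noting only that $b<\pi$ permits a grim reaper as barrier), and proves the monotonicity exactly as you do, by using uniqueness to get the reflection symmetry (hence $u_x=0$ on $x=\pm a/2$), differentiating the translator equation so that $u_x$ satisfies a linear elliptic equation with no zeroth-order term, and applying the maximum principle on the half-period rectangle $[-a/2,a/2]\times[0,b]$. Your reading of the symmetry hypothesis as invariance under the reflections about the lines $x=\pm a/2$ (i.e.\ $x\mapsto a-x$ and $x\mapsto -a-x$) agrees with the paper's intent, since its own proof takes the fixed points to be at $x=\pm a/2$, so that is not a discrepancy.
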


\begin{proof}
Existence and uniqueness are standard. (The hypothesis that $0<b<\pi$ means we can use
a grim reaper surface as an upper barrier.)

For the ``furthermore" assertion, note by uniqueness that $u$ is invariant under
$(x,y)\mapsto (a/2-x,y)$ and under $(x,y)\mapsto (-a/2-x,y)$, so $\pdf{}xu(x,y)=0$ when $x=a/2$
and when $x=-a/2$.
By differentiating the translator equation~\eqref{translator-equation} with respect to $x$, we see that $v:=\pdf{}xu$ satisfies
an elliptic partial differential equation of the form
\[  
     a_{ij}(x,y) \D_{ij}v(x,y) + b_i(x,y) \D_iv(x,y) = 0.
\]
Hence by the maximum principle, 
\[
  v: [-a/2,a/2]\times [0,b] \to \RR
\]
attains its minimum on the boundary.
\end{proof}

\begin{proof}[Proof of Theorem~\ref{property-theorem}]
Let $f^i:\RR\to\RR$ be a sequence of smooth functions satisfying the hypotheses on $f$ in 
Lemma~\ref{property-lemma} such that
\begin{gather*}
f^1\le f^2 \le f^3\le\dots, \\
f^i(x)\equiv 0 \quad\text{for $-a/2<x<0$, and} \\
\lim_{i\to\infty}f^i(x)= \infty \quad\text{for $0<x<a/2$}.
\end{gather*}
Let $u^i$ be the $(2a,0)$-periodic solution of the translator equation corresponding to $f^i$ as in 
Lemma~\ref{property-lemma}.  By the maximum principle,
\[
    u^1 \le u^2 \le \dots \le u_{a,b}
\]
where $u_{a,b}$ is as in Proposition~\ref{first-existence-proposition}.
It follows that $u^i$ converges to $u_{a,b}$, and that the convergence is smooth
except at the points $(na,0)$, $n\in\ZZ$.   Thus by Lemma~\ref{property-lemma},
\[
  \pdf{}x u_{a,b} \ge 0 \quad\text{for $-a/2\le x \le a/2$}.
\]
Since $u_a$ was obtained as a limit of $u_{a,b_i}-z_i$ for $b_i\uparrow b(a)$
(and for suitable $z_i\in \RR$), we see that $\pdf{}xu_a\ge 0$ on $[-a/2,a/2]\times (0,b(a))$.
The strict inequalities in Theorem~\ref{property-theorem} follow by the strong maximum principle
and the strong boundary maximum principle.
\end{proof}

\begin{corollary}\label{mixed-curvature}
Every trident $M_a$ has points where the Gauss curvature (with respect to the Euclidean metric)
is negative and other points where it is positive.
\end{corollary}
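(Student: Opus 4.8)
The plan is to determine the sign of the Euclidean Gauss curvature $K$ of $M_a$ along the two vertical planes of symmetry $\{x=a/2\}$ and $\{x=-a/2\}$ of $M_a$ (symmetry in these planes is the remark following Theorem~\ref{main-theorem}), using only Theorem~\ref{property-theorem} and the translator PDE. Writing $M_a\cap\{y>0\}$ as the graph $z=u_a(x,y)$, one has
\[
   K=\frac{\partial^2_{xx}u_a\,\partial^2_{yy}u_a-(\partial^2_{xy}u_a)^2}{(1+|\nabla u_a|^2)^2},
\]
so the sign of $K$ at a point equals the sign of the Hessian determinant of $u_a$ there. Reflection symmetry in $\{x=a/2\}$ gives $u_a(a-x,y)=u_a(x,y)$; differentiating in $x$ shows $\partial_x u_a\equiv 0$ along the line $\{x=a/2\}$, and differentiating that identity in $y$ shows $\partial^2_{xy}u_a\equiv 0$ there as well. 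The same holds along $\{x=-a/2\}$.

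Next I would substitute $\partial_x u_a=\partial^2_{xy}u_a=0$ into the translator equation
\[
   (1+(\partial_y u_a)^2)\,\partial^2_{xx}u_a-2\,\partial_x u_a\,\partial_y u_a\,\partial^2_{xy}u_a+(1+(\partial_x u_a)^2)\,\partial^2_{yy}u_a=-(1+|\nabla u_a|^2),
\]
which along $\{x=\pm a/2\}$ collapses to $\partial^2_{yy}u_a=-(1+(\partial_y u_a)^2)(1+\partial^2_{xx}u_a)$. Hence along either of these two curves the sign of the Hessian determinant, and therefore of $K$, equals the sign of $-\,\partial^2_{xx}u_a\,(1+\partial^2_{xx}u_a)$.

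For points of negative curvature I would use $\{x=-a/2\}$: there $\partial^2_{xx}u_a>0$ by Theorem~\ref{property-theorem}, so $\partial^2_{xx}u_a\,(1+\partial^2_{xx}u_a)>0$, hence $K<0$ at every point of $M_a\cap\{x=-a/2\}$. For points of positive curvature I would use $\{x=a/2\}$, where $\partial^2_{xx}u_a<0$ by Theorem~\ref{property-theorem}; it suffices to exhibit one $y_0\in(0,b(a))$ with $-1<\partial^2_{xx}u_a(a/2,y_0)<0$, for then $-\,\partial^2_{xx}u_a\,(1+\partial^2_{xx}u_a)>0$ and $K>0$ at $(a/2,y_0,u_a(a/2,y_0))$. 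If no such $y_0$ existed, then $\partial^2_{xx}u_a(a/2,y)\le -1$ for all $y$, so the identity above forces $\partial^2_{yy}u_a(a/2,y)\ge 0$ for all $y$, i.e.\ $y\mapsto u_a(a/2,y)$ is convex on the bounded interval $(0,b(a))$; but by Theorem~\ref{main-theorem} this function tends to $+\infty$ as $y\to0^+$ (since $0<a/2<a$) and to $-\infty$ as $y\to b(a)^-$, and a convex function on a bounded open interval lies above each of its tangent lines and so cannot tend to $-\infty$ at an endpoint. This contradiction yields the desired $y_0$. Since $M_a$ is periodic, these conclusions hold throughout the surface, which proves Corollary~\ref{mixed-curvature}.

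The only step requiring any care is this last convexity argument, and with it the (harmless) bookkeeping that the boundary values $u_a(a/2,\cdot)=+\infty$ at $y=0$ and $u_a(a/2,\cdot)=-\infty$ at $y=b(a)$ are exactly those provided by Theorem~\ref{main-theorem}; everything else is an immediate consequence of Theorem~\ref{property-theorem}, the reflection symmetries of $M_a$, and the translator equation.
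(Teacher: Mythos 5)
Your argument is correct, but it takes a somewhat different route than the paper's. Both proofs start from the same ingredients: the reflection symmetry (Corollary~\ref{invariance-corollary}) forces $u_{xy}\equiv 0$ along the symmetry lines, Theorem~\ref{property-theorem} gives the sign of $u_{xx}$ there, and the sign of $K$ is that of $u_{xx}u_{yy}-u_{xy}^2$. The paper, however, never touches the translator PDE: it works only on the line $x=a/2$, where $u_{xx}<0$ and $u_{xy}=0$, and observes that since $u(a/2,\cdot)$ runs from $+\infty$ at $y=0$ to $-\infty$ at $y=b(a)$ on a bounded interval, $u_{yy}(a/2,\cdot)$ must take both signs (exactly the convexity/concavity obstruction you use), which immediately produces points of positive and of negative curvature. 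You instead substitute $u_x=u_{xy}=0$ into the translator equation to get $u_{yy}=-(1+u_y^2)(1+u_{xx})$ along $x=\pm a/2$, reducing the sign of $K$ there to that of $-u_{xx}(1+u_{xx})$; this yields negative curvature at \emph{every} point of $\{x=-a/2\}$ (a pointwise strengthening along that curve, in the spirit of the paper's parenthetical remark that $K<0$ along the vertical lines) and positive curvature at a point of $\{x=a/2\}$ via your convexity contradiction, which only needs the $-\infty$ boundary value at $y=b(a)$. The paper's argument is shorter and more elementary; yours buys the explicit criterion $-1<u_{xx}<0$ for positive curvature and the global sign statement on the symmetry line $x=-a/2$. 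All steps check out: the PDE reduction, the identity for the Hessian determinant, and the fact that a smooth convex (resp.\ concave) function on a bounded interval cannot tend to $-\infty$ (resp.\ $+\infty$) at an endpoint are all used correctly.
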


By contrast, all graphical translators have nonnegative curvature everywhere
(by a theorem of Spruck and Xiao~\cite{spruck-xiao}),
and all known semigraphical translators other than tridents have negative curvature
 everywhere~\cite{scherkon}.

\begin{proof}
Let $u=u_a$ be as in Theorem~\ref{existence-theorem}.
By Corollary~\ref{invariance-corollary},  $u$ is invariant under 
\[
 (x,y)\mapsto (a/2-x,y),
\]
and thus $u_{xy}(a/2,y)\equiv 0$. 
By Theorem~\ref{property-theorem}, $u_{xx}(a/2,y)<0$ for all $y\in (0,b(a))$.

Since $u(a/2,0)=\infty$ and $u(a/2,b(a))= - \infty$, there exist $y$ for which $u_{yy}(a/2,y)<0$
and other $y$ for which $u_{yy}(a/2,y)>0$.
The Gauss curvature of $M_a$ is positive at the former and negative at the latter.
(Recall that the sign of the Gauss curvature is equal to the sign of $u_{xx}u_{yy}-u_{xy}^2$.)

(It is also easy to prove that the Gauss curvature is negative at all points in the vertical lines in $M_a$.)
\end{proof}

\section{Continuous Dependence on the Period}\label{dependence-section}

In this section (and in \S\ref{to-zero-section}), we think of $M_a$ 
(from Theorem~\ref{existence-theorem}) as a $(2a,0,0)$-periodic surface in $\RR^3$
rather than as a surface in a quotient of $\RR^3$.

\begin{theorem}\label{dependence-theorem}
Suppose that $a(i)$ converges to $a\in (0,\infty)$.
Then $M_{a(i)}$ converges smoothly to $M_a$, and $b(a(i))$ converges to $b(a)$.
\end{theorem}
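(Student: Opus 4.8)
The plan is to combine a compactness argument (using the curvature estimate of Theorem \ref{main-curvature-estimate}) with the uniqueness provided by Theorem \ref{uniqueness-theorem}, together with the monotonicity of $b(\cdot)$ and the comparison estimates of Proposition \ref{alpha-bounds}. First I would establish that $b(a(i))\to b(a)$. Since $b(\cdot)$ is increasing (Theorem \ref{existence-theorem}), it has left and right limits at $a$; the inequalities \eqref{a-a-prime} show that $(a,b)\mapsto \alpha(a,b)/a$ is jointly continuous, and $b(a)$ is characterized as the value where this ratio first equals $3$, so the squeeze $\tfrac{a}{a'}\alpha(a',b)\le \alpha(a,b)\le\alpha(a',b)$ forces $b(a(i))\to b(a)$. (Alternatively, once smooth convergence $M_{a(i)}\to$ some limit is known, $b$ is read off from the geometry of the limit.)

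Next I would extract a limit of the surfaces. Each $M_{a(i)}$ is, away from its boundary lines $\{(na(i),0)\}\times\RR$, the graph of the function $u_{a(i)}$ over $\RR\times(0,b(a(i)))$, and each such graph is asymptotic to $P\times\RR$ as $z\to\infty$ and to $N\times\RR$ as $z\to-\infty$. Normalize (using the vertical-translation ambiguity) so that each $M_{a(i)}$ is tangent to the $yz$-plane at the origin. The curvature estimate in Theorem \ref{main-curvature-estimate}, applied on the complement of fixed neighborhoods of the vertical lines (where the surface is a graph over a convex region with polygonal asymptotic boundary), gives uniform local area and curvature bounds; combined with the fact that near the vertical lines $M_{a(i)}$ is obtained by $180^\circ$-rotation of a graph, we get uniform local geometry everywhere. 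Hence a subsequence of $M_{a(i)}$ converges smoothly (with multiplicity one, by embeddedness) to a properly embedded translator $M_\infty$. The normalization at the origin, the convergence $a(i)\to a$, and the convergence of the asymptotic data show that $M_\infty$ contains the lines $\{(na,0)\}\times\RR$, is $(2a,0,0)$-periodic, is tangent to the $yz$-plane at $0$, and that $M_\infty\cap\{y>0\}$ is the graph of a function $u_\infty$ on $\RR\times(0,b(a))$ with boundary values $u_\infty(x,0)=\mp\infty$ for $\mp x\in(0,a)$... wait — with $u_\infty(x,0)=-\infty$ on $(-a,0)$, $+\infty$ on $(0,a)$, and $u_\infty(\cdot,b(a))\equiv-\infty$; the one-sided limits and monotonicity of the angle function along each vertical line (which pass to the limit) guarantee the boundary behavior is exactly that of a trident and not something degenerate. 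So $M_\infty$ satisfies properties \eqref{smooth-item}--\eqref{origin-item} of Theorem \ref{main-theorem}, hence $u_\infty=u_a$ and $M_\infty=M_a$ by Theorem \ref{uniqueness-theorem} (Uniqueness). Since every subsequence has a further subsequence converging to the same limit $M_a$, the full sequence $M_{a(i)}$ converges smoothly to $M_a$.

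The main obstacle is controlling the surfaces near their boundary lines and near the ``ends'' $z\to\pm\infty$ uniformly in $i$, i.e.\ ruling out loss of area or a change in asymptotics in the limit: one must be sure the limit surface is again asymptotic to exactly $P\times\RR$ (not to some larger or smaller geodesic product) as $z\to+\infty$ and to $N\times\RR$ as $z\to-\infty$, and that no portion of the surface escapes to infinity in $z$ along the way. This is handled by the barrier arguments already used in \S\ref{existence-section}: grim reaper surfaces $z=\log(\cos y)$ and $z=\log(\sin y)$ (suitably translated and reflected) bound the $M_{a(i)}$ from above and below in a manner independent of $i$, pinning the surfaces in a uniform region and forcing the correct asymptotics in the limit; the fact that $b(a(i))\to b(a)<\pi$ is exactly what keeps these barriers usable uniformly. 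Once the asymptotics survive passage to the limit, uniqueness closes the argument.
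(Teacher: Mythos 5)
Your compactness argument and the identification of the subsequential limit with $M_a$ via the Uniqueness Theorem~\ref{uniqueness-theorem} follow essentially the paper's route, but there is a genuine gap in your treatment of the width: the claim $b(a(i))\to b(a)$ is not established. Monotonicity of $b(\cdot)$ together with the continuity of $(a,b)\mapsto \alpha(a,b)/a$ from Proposition~\ref{alpha-bounds} only rules out a jump from the left: if $a(i)\uparrow a$ and $\lim b(a(i))<b^*<b(a)$, then $\alpha(a(i),b^*)=3a(i)$ passes to the limit and contradicts $\alpha(a,b^*)<3a$. In the other direction there is no contradiction to be had: it is perfectly consistent with joint continuity and with $\alpha(a,b)\le\alpha(a',b)\le (a'/a)\,\alpha(a,b)$ that $\alpha(a,\cdot)=3a$ on an interval $[b(a),b_0]$ with $b_0>b(a)$ while $\alpha(a',\cdot)<3a'$ there for every $a'>a$; strict inequalities can degenerate to equalities in the limit, so your ``squeeze'' does not force $\limsup b(a(i))\le b(a)$. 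Your fallback of reading the width off the smooth limit fails for precisely the reason this theorem requires work: the portion of $M_{a(i)}$ with $y$ between $b(a)+\eps$ and $b(a(i))$ can slide off to $z=-\infty$ and vanish from any locally smooth limit, so smooth convergence to $M_a$ yields only $b(a)\le\lim b(a(i))$. The closing barrier remarks do not repair this: grim reaper barriers do not exclude that escape, and invoking $b(a(i))\to b(a)<\pi$ to justify uniform barriers is circular.

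The paper closes exactly this gap with a dedicated argument which your proposal lacks: assuming $b:=\lim b(a(i))>b(a)$, it uses the curvature estimate to choose $\beta<\beta'<b(a)$ with $\max_x \pdf{u_a}{y}(x,\beta')<\min_x \pdf{u_a}{y}(x,\beta)$ (possible because the slope blows down near $y=b(a)$), restricts to sequences with $a(i)/a$ rational so that $f(x,y)=u_a(x,y)-u_{a(i)}(x,y+\delta)$ with $\delta=\beta'-\beta$ is periodic in $x$ on the strip $\RR\times[\beta,b(a))$, and observes that $f$ tends to $-\infty$ on the top edge while $\pdf{f}{y}>0$ on the bottom edge, so $f$ would attain an interior maximum, contradicting the strong maximum principle for differences of translator solutions; monotonicity of $b(\cdot)$ then reduces arbitrary sequences to the rational-ratio case. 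Some device of this kind is needed; as written, your proof gives smooth subconvergence to $M_a$ but not convergence of the widths.
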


\begin{proof}
By passing to a subsequence, we can assume 
that $b(a(i))$ converges to a limit $b$, and 
that $M_{a(i)}$ converges smoothly
to a limit $M$ that lies in the slab $\{-b \le y \le b\}$.

Exactly as in the proof of Theorem~\ref{existence-theorem}, one shows that $M$ has all the properties listed
in that theorem.   Thus by the Uniqueness Theorem~\ref{uniqueness-theorem}, $M=M_a$.

Next, we show that $b=b(a)$. 
Since $b(\cdot)$ is an increasing function (by Theorem~\ref{existence-theorem}),
  it suffices to show that $b(a(i))\to b(a)$
for sequences $a(i)\to a$ such that $a(i)/a$ is rational.  We assume that $a(i)$ is such a sequence.
 Note that
 \begin{align*}
 \Pi(M_{a(i)}) &= \RR\times (-b(a_i),b(a_i)), \, \text{and}\\
 \Pi(M_a) &= \RR\times (-b(a),b(a)),
\end{align*}
where $\Pi(x,y,z):=(x,y)$.
Thus, since $M_{a(i)} \to M_a$, we see that $b(a)\le b$.

It remains to show that $b\le b(a)$.  
Suppose to the contrary that $b>b(a)$.
Choose $\beta$ with $0<\beta<b(a)$ very close to $b(a)$, so that 
\begin{equation}\label{very-close}
    b(a) - \beta < b - b(a).
\end{equation}
The curvature estimates imply that $\Tan(M_a,p_k)\cdot \ee_2\to 0$
for every sequence of points $p_k=(x_k,y_k,z_k)$ in $M_a$ such that $y_k\to b(a)$.
Thus there is a $\beta'$ such that $\beta<\beta' < b(a)$ and such that
\[
    \max_x\pdf{u_a}y (x,\beta') < \min_x\pdf{u_a}y (x,\beta).
\]
Consequently, for all sufficiently large $i$,
\[
    \max_x \pdf{u_{a(i)}}y (x,\beta') < \min_x\pdf{u_a}y (x,\beta).
\]  
Fix such an $i$.  Let $\delta=\beta'-\beta$ and consider
\begin{align*}
&f: \RR\times [\beta, b(a)) \to \RR, \\
&f(x,y) = u_a(x,y) - u_{a(i)}(x, y + \delta).
\end{align*}
Since 
\[
\delta=\beta'-\beta<b(a)-\beta < b-b(a)
\]
by choice of $\beta$ (see~\eqref{very-close}),
we see that $y<b(a)$ implies that $y+\delta<b=\lim b(a_i)$, so that $f$ is defined
on the entire strip $\RR\times [\beta,b(a))$ if $i$ is sufficiently large.

Note that $f$ is periodic since $a(i)/a$ is rational, 
that $\pdf{f}y>0$ on the lower edge $\RR\times \{\beta\}$ of the domain,
and that $f$ is $-\infty$ on the upper edge $\RR\times \{b(a)\}$ of the domain.
Thus 
$f$ attains its maximum at an interior point, which is impossible
by the strong maximum principle.  
The contradiction proves that $b\le b(a)$.
\end{proof}

\section{Behavior as $a\to 0$}\label{to-zero-section}
\begin{figure}[htbp]
\begin{center}
\includegraphics[width=.5\textwidth]{}
\end{center}
\caption{The surfaces $M_3$ (left) and $M_1$ (right).} \label{fig:tres}
\end{figure}

In this section (as in~\S\ref{dependence-section}), 
 we think of $M_a$ as a $(2a,0,0)$-periodic surface in $\RR^3$
rather than as a surface in a quotient of $\RR^3$.

Recall that we distinguish $M_a$ from its vertical translates by requiring that
$\Tan(M_a,O)$ is the $yz$-plane.

\begin{theorem}\label{to-zero-theorem}
As $a\to 0$, the surface $M_a$ converges to the union of the $xz$-plane 
and the grim reaper surface 
\[
    \{(x,y,z): z = \log(\cos y), \, |y|<\pi/2\}.  
\]
The convergence is smooth away from the $x$-axis.
\end{theorem}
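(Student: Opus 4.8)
\emph{Proof proposal.}
The plan is a compactness argument. Fix a sequence $a_i\to 0$; since $b(\cdot)$ is monotone (Theorem~\ref{existence-theorem}), $b_\infty:=\lim_i b(a_i)$ exists in $[0,\pi]$. Regard $M_{a_i}$ as the $(2a_i,0,0)$-periodic surface in $\RR^3$ normalized so that $\Tan(M_{a_i},O)$ is the $yz$-plane; then $M_{a_i}\cap\{y>0\}$ is the graph of a $2a_i$-periodic function $u_{a_i}\colon\RR\times(0,b(a_i))\to\RR$, and $M_{a_i}\cap\{y<0\}$ is its image under the $180^\circ$ rotation $(x,y,z)\mapsto(-x,-y,z)$ about $\{(0,0)\}\times\RR\subset M_{a_i}$. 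The point enabling compactness is that the singular set of $M_{a_i}$, the union of the vertical lines $\{(na_i,0)\}\times\RR$, lies in the plane $\{y=0\}$. Hence for $p\in M_{a_i}$ the radius $r(M_{a_i},p)$ of Theorem~\ref{main-curvature-estimate} is at least $|y(p)|$ --- a Euclidean ball about $p$ of radius $<|y(p)|$ misses $\{y=0\}$, hence misses the singular set --- so, applying Theorem~\ref{main-curvature-estimate} to the graphical pieces $M_{a_i}\cap\{y>0\}$ and $M_{a_i}\cap\{y<0\}$ (smooth up to the vertical lines by the boundary regularity recalled in Remark~\ref{bounded-remark}), one gets $|A(M_{a_i},p)|\le C/\min\{1,|y(p)|\}$. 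Thus the $M_{a_i}$ have locally uniformly bounded second fundamental form on $\{y\ne 0\}$.

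Together with a non-escape argument for the graphs --- a translator barrier over sub-strips, plus the facts that $M_{a_i}$ passes through $O$ with a fixed tangent plane and that, by boundary regularity and monotonicity of the tangent angle along $\{(0,0)\}\times\RR$, $u_{a_i}(0,y)\to 0$ as $y\to 0^+$ for every $i$ --- this yields, after passing to a subsequence, smooth convergence of $M_{a_i}$ on $\{y\ne0\}$ to a smooth translator $M_\infty$, with $M_\infty\cap\{y>0\}$ the graph of $u_\infty=\lim_i u_{a_i}$ on $\RR\times(0,b_\infty)$. Since $M_{a_i}$ has period $2a_i\to 0$, the smooth limit $M_\infty$ is invariant under every horizontal translation, so $u_\infty$ is $x$-independent and $z=u_\infty(y)$ is an $x$-invariant translator graph; such a graph must be a translate $z=\log\cos(y-c)+d$ of the grim reaper. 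As $y\to b_\infty$ the surface becomes vertical with $u_\infty\to-\infty$ (this is how $M_{a_i}$ behaves as $y\to b(a_i)$, and it passes to the limit; compare the proof of Theorem~\ref{dependence-theorem}), forcing $b_\infty=c+\pi/2$; and the normalization at $O$ pins down the translate, giving $c=d=0$, hence $b_\infty=\pi/2$ and $b(a)\to\pi/2$. By the $180^\circ$ rotation symmetry, $M_\infty$ coincides on $\{y\ne0\}$ with $G\setminus X$, where $G=\{(x,y,z):z=\log\cos y,\ |y|<\pi/2\}$.

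It remains to treat points $(x_0,0,z_0)$ with $z_0\ne 0$, where the claim is that $M_{a_i}$ converges smoothly near such a point to the $xz$-plane $\{y=0\}$. (Note $G\subset\{z\le 0\}$ and $G\cap\{y=0\}=X$, so near such a point the grim reaper is absent when $z_0>0$ and is a definite distance away when $z_0<0$.) Since $u_{a_i}\to\log\cos y\le 0$ on $\{y\ge\epsilon\}$, for $i$ large the portion of $M_{a_i}$ in a small fixed ball about $(x_0,0,z_0)$ lies in $\{|y|<\epsilon\}$; letting $\epsilon\to 0$, $M_{a_i}$ near $(x_0,0,z_0)$ lies in arbitrarily thin slabs about $\{y=0\}$. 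Inside such a slab $M_{a_i}$ is a graph $y=h_{a_i}(x,z)$ --- on the relevant sheet $z=u_{a_i}(x,y)$ is monotone in $y$ near $y=0$, because near $\{y=0\}$ the surface $M_{a_i}$ is smoothly asymptotic as $z\to\pm\infty$ to parts of $\{y=0\}$ (Theorem~\ref{existence-theorem}(ii)) --- and then the translator equation, the $2a_i$-periodicity in $x$ ($2a_i\to 0$), and interior elliptic estimates force $h_{a_i}\to 0$ in $C^\infty_{\mathrm{loc}}$. Finally, the limit $\{y=0\}\cup G$ is independent of the subsequence, so $M_a\to\{y=0\}\cup G$ as $a\to 0$, with smooth convergence on compact subsets of $\RR^3\setminus X$.

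\textbf{Main obstacle.} The first two paragraphs are routine given the curvature estimate and the classification of $x$-invariant translators, apart from the minor points of non-escape of the graphs and of pinning down $c=d=0$; the real work is the third paragraph. There one must show that the ``twisting'' of $M_a$ near $\{y=0\}$ happens at length scale $O(a)$ --- equivalently, that for fixed $z_0\ne 0$ the surface near $(x_0,0,z_0)$ has already relaxed, once $a$ is small, to its $z\to\pm\infty$ asymptote $\{y=0\}$. The natural tool is a blow-up: rescaling $M_a$ by $1/a$ about a vertical line produces in the limit a complete, singly periodic minimal surface of Scherk type whose ends are the relevant half-planes of $\{y=0\}$, and uniform decay estimates for that surface, transplanted back, give the claim.
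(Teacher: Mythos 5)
Your first two paragraphs are broadly workable, but the decisive part of the theorem --- smooth convergence to the $xz$-plane near points $(x_0,0,z_0)$ with $z_0\ne 0$, where the vertical lines $\{(na,0)\}\times\RR$ accumulate as $a\to 0$ --- is exactly where your argument has a genuine gap, as you yourself flag. The bound $|A|\le C/\min\{1,|y|\}$ you extract from Theorem~\ref{main-curvature-estimate} degenerates on $\{y=0\}$, so it gives no compactness there. The claim that inside a thin slab $M_{a_i}$ is a graph $y=h_{a_i}(x,z)$ is unjustified: the reason you give (smooth asymptotics of $M_{a_i}$ to $\{y=0\}$ as $z\to\pm\infty$, Theorem~\ref{existence-theorem}) controls nothing at a fixed height $z_0$, and a priori the surface could fold repeatedly over a fixed disk in the $xz$-plane at scale $a_i$ (near each vertical line there is a height where the tangent plane is orthogonal to the $x$-axis, so graphicality over the $xz$-plane is exactly what must be proved). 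Even granting graphicality, ``$|h_{a_i}|\le\eps$ plus ellipticity'' does not yield $C^\infty_{\mathrm{loc}}$ convergence without a gradient bound. Your proposed blow-up to a Scherk-type singly periodic surface with ``uniform decay estimates'' is a plan, not a proof: it would require identifying the blow-up limit and decay uniform in $a$, none of which is supplied. The paper closes precisely this gap by a different mechanism: it proves the $a$-independent estimate $|A(M_a,p)|\,\min\{1,\dist(p,X)\}\le C$ (Proposition~\ref{curvature-bound}), with scale the distance to the $x$-axis rather than to the plane $\{y=0\}$, via a rescaling argument in which Lemma~\ref{gauss-lemma} (the Gauss image of the vertical lines lies in two quarter-circles of the equator) together with Osserman's theorem forces the rescaled limit to be a plane; this gives smooth subsequential convergence away from $X$ itself, which covers your problematic points.

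There are also two soft spots in your second paragraph. Ruling out that the limit over $\{y>0\}$ is a vertical plane (equivalently, that the graphs neither escape to $-\infty$ nor become vertical away from $y=0$) is where the paper's area-comparison Lemma~\ref{area-bound-lemma} does real work (the computation forcing $\eps\ge \tfrac13$, and again when excluding a limit plane of multiplicity $3$); your ``translator barrier over sub-strips'' is not spelled out. And pinning down $c=d=0$ ``by the normalization at $O$'' is delicate, since $O$ lies on the axis where the convergence is not smooth; the paper instead identifies the locus of non-smooth convergence with a horizontal line $X-(0,0,\hat z)$, deduces $|\hat z|<\infty$, and uses a tangent-cone/area argument to show the two halves in $\{y>0\}$ and $\{y<0\}$ fit together into a single grim reaper containing $X$.
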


\begin{proof}
For small $a$, 
Nguyen's examples~\cite{nguyen-tridents} have the limiting behavior described
in Theorem~\ref{to-zero-theorem}. 
For such $a$, the Uniqueness Theorem~\ref{uniqueness-theorem} implies that the examples she constructs 
are the same as the examples in this paper.
\end{proof}

We now give a different proof of Theorem~\ref{to-zero-theorem} that is independent of Nguyen's construction.

\begin{lemma}\label{area-bound-lemma}
Let $M = M_a$. If $W$ is an open domain  with $\area_g(\partial W)<\infty$, then 
\begin{equation}\label{boundary-area}
\area_g(M \cap \overline{W}) < \area_g(W \cap \{y = 0\}) + \frac12 \area_g(\partial W).
\end{equation}
\end{lemma}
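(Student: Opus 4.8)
The plan is to exploit the area identity from Lemma~\ref{prelim-lemma}\eqref{area-identity-item}, applied not to $M$ itself but to the portion $M \cap \overline{W}$, together with the fact that the boundary of this truncated surface splits into the part lying along the vertical lines (where $\vv \cdot \nn = 0$) and the part lying on $\partial W$. Since $M$ is minimal with respect to the translator metric, the identity gives
\[
   \area_g(M \cap \overline{W}) = \int_{\partial(M \cap \overline{W})} \vv \cdot \nn \, ds.
\]
The boundary of $M \cap \overline{W}$ consists of (i) the arcs of the vertical lines $\{(na,0)\} \times \RR$ contained in $\overline{W}$, along which $\vv = -\partial/\partial z$ is tangent to the line and hence tangent to $M$, so that $\nn$ is horizontal and $\vv \cdot \nn = 0$; and (ii) the curve $M \cap \partial W$. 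Thus the whole integral reduces to $\int_{M \cap \partial W} \vv \cdot \nn \, ds$.

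Next I would estimate $\int_{M \cap \partial W} \vv \cdot \nn \, ds$. Split $M \cap \partial W$ into the piece where $M$ meets $\partial W$ with $y = 0$ (call it $\Sigma_0$) and the rest (call it $\Sigma_1$). Along $\Sigma_1$, crudely $\vv \cdot \nn \le |\vv|$; since $|\vv|_g = e^{-z/2}$ and the $g$-length element along a curve in $\partial W$ dominates $e^{-z/2}$ times the ambient arclength, one gets $\int_{\Sigma_1} \vv \cdot \nn \, ds \le \area_g(\Sigma_1) \le \area_g(\partial W \setminus \{y=0\})$, which is at most $\area_g(\partial W)$. Actually to get the factor $\tfrac12$ one should be a little more careful: on $M \cap \{y = 0\}$ the outward normal $\nn$ to $M$ along $\Sigma_0$ points into $\{y > 0\}$ (or $\{y<0\}$), and the key observation is that $M$ near $\{y=0\}$ is, away from the vertical lines, a graph with $\partial u/\partial y \to \pm\infty$, so the tangent plane to $M$ along $\Sigma_0$ is vertical; hence $\nn$ is vertical there and $\vv \cdot \nn = \pm |\vv|$ with the sign being such that $\vv\cdot\nn = -1 \cdot (\text{something})$. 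More precisely, along $\{y=0\}$ with $z = 0$, the unit conormal points down (in the $-\ee_3$ direction) on the $\{0<x<a\}$ side where $u = +\infty$, giving $\vv \cdot \nn = +1 > 0$, and conversely on the $\{-a<x<0\}$ side $u = -\infty$ gives $\vv \cdot \nn < 0$. So the contribution from $\Sigma_0$ with $z \ne 0$ is controlled, and what remains is to bound everything by $\area_g(W \cap \{y=0\}) + \tfrac12\area_g(\partial W)$. The factor $\tfrac12$ should come from the fact that $\Sigma_1$ is a \emph{proper} subset of $\partial W$ in a way that, after accounting for where $\nn$ can actually point, only half of $\partial W$ can contribute with the ``bad'' sign; alternatively, one pairs up $\partial W$ with its image under a reflection and uses that $M$ is embedded to avoid double-counting.

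Here is the cleaner route I would actually pursue for the $\tfrac12$. Use the identity $\area_g(M \cap \overline W) = \int_{M \cap \partial W}\vv\cdot\nn\,ds$ and also apply Lemma~\ref{prelim-lemma}\eqref{area-identity-item} to the vertical-tangent-plane surface $(\partial W \cap \{z \le \text{something}\})$\dots no. Instead: note that $\{y=0\}$ (the $xz$-plane) is itself a translator (it has vertical tangent planes, $H \cdot \vv = 0$), so I can compare $M$ with a competitor surface built from $W \cap \{y=0\}$ plus a piece of $\partial W$. Let $M' = (W \cap \{y=0\}) \cup (\text{half of } \partial W)$, chosen so that $\partial M' = \partial(M \cap \overline W)$; since $M$ is area-minimizing among such surfaces (this is how $M_a$ was produced, as a limit of least-area surfaces — strictly one should either invoke the least-area property from Proposition~\ref{first-existence-proposition} on fundamental pieces, or prove it directly via cut-and-paste against $M'$ using Lemma~\ref{cut-paste}), we get $\area_g(M \cap \overline W) \le \area_g(M') = \area_g(W\cap\{y=0\}) + \area_g(\text{half of }\partial W) \le \area_g(W\cap\{y=0\}) + \tfrac12\area_g(\partial W)$, with the strict inequality coming from the fact that $M \ne M'$ (since $M$ is not contained in $\{y=0\}$) together with the uniqueness/strict part of the minimization, exactly as in the equality case of Lemma~\ref{prelim-lemma}\eqref{second-prelim-item}.

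The main obstacle is making rigorous the claim that $M_a \cap \overline W$ is area-minimizing (in the translator metric) for its own boundary, and identifying precisely which ``half'' of $\partial W$ one should use so that the competitor $M'$ has the correct boundary and so that the strictness of the inequality is genuinely forced. For a general open $W$ the set $M \cap \partial W$ can be complicated, and one needs $\partial W$ to be rectifiable with the truncation $M \cap \overline W$ an honest integral current; passing to a smooth or piecewise-smooth exhaustion of $W$ (as in the proof of Lemma~\ref{topsy}) and then taking limits handles this. The ``factor $\tfrac12$'' is really the statement that $\partial W$ has two sides and $M$, meeting $\{y=0\}$ transversally from one side, can only be filled in using the portion of $\partial W$ on that side; once the geometry of which side is pinned down, the estimate is just the first variation identity plus $\vv \cdot \nn \le |\vv|_g$ and $|\vv|_g\,ds \le ds_g$.
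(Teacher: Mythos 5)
There is a genuine gap, and it is the one you flag yourself at the end: your argument hinges on $M_a\cap\overline W$ being $g$-area-minimizing for its own boundary, together with a competitor built from $W\cap\{y=0\}$ and a ``half of $\partial W$'' with matching boundary, and neither ingredient is available. The full trident is not a graph; it is two graphs glued along the vertical lines, and nothing in the construction asserts that this glued surface minimizes area for compactly supported comparisons (the least-area property in Proposition~\ref{first-existence-proposition} concerns the auxiliary surfaces with boundary $\Gamma_{a,b}$ in the quotient $K_{a,b}\times[0,\infty)$, not $M_a$ in $\RR^3$, and $M_a$ itself only arises as a limit of translated pieces). Moreover, for a general open $W$ there is no well-defined ``half of $\partial W$'' that closes up $\partial(M\cap\overline W)$, so the competitor $M'$ is not actually constructed. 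Your first route fares no better: the first-variation identity of Lemma~\ref{prelim-lemma} gives $\area_g(M\cap\overline W)=\int_{M\cap\partial W}\vv\cdot\nn\,ds\le\length_g(M\cap\partial W)$, a quantity with no a priori relation to $\tfrac12\area_g(\partial W)$, and your subsequent discussion of signs of $\vv\cdot\nn$ along $\{y=0\}$ does not produce either the factor $\tfrac12$ or the term $\area_g(W\cap\{y=0\})$.

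The missing idea is to cut $M$ along the plane $\{y=0\}$ instead of treating it as one minimizer. Each half $M\cap\{y>0\}$ and $M\cap\{y<0\}$ is a minimal graph over a strip, which is convex, and a minimal graph over a convex domain is $g$-area-minimizing (its vertical translates foliate the slab, and convexity lets one push competitors into the slab without increasing area). Now use the standard consequence of minimality: if $\Sigma$ is area-minimizing inside a region $U$, then $\area_g(\Sigma\cap U)\le\tfrac12\area_g(\partial U)$ (strictly, here, since $M$ is not contained in $\partial U$), because $\Sigma\cap U$ together with either of the two portions of $\partial U$ it cuts off bounds a region, so its area is at most that of either portion. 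Apply this with $U=W\cap\{y>0\}$ and $U=W\cap\{y<0\}$; since $\partial(W\cap\{y>0\})\subset(\partial W\cap\{y>0\})\cup(W\cap\{y=0\})$ and likewise for $\{y<0\}$, adding the two inequalities makes $W\cap\{y=0\}$ appear twice, which is exactly why it enters~\eqref{boundary-area} with coefficient $1$ while $\partial W$ enters with coefficient $\tfrac12$. This two-line decomposition replaces the competitor construction you were trying to make rigorous.
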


\begin{proof}
Note that 
\[
\area_g(M \cap \overline{W} \cap \{y \ge 0\}) < \frac12 \area_g(\partial(W \cap \{y > 0\}))
\]
since $M\cap\{y>0\}$ is a graph over a convex domain (namely a strip) and therefore is $g$-area-minimizing.
Likewise
\[
\area_g(M \cap \overline{W} \cap \{y \le 0\}) < \frac12 \area_g(\partial(W \cap \{y < 0\})).
\]
Adding these inequalities gives~\eqref{boundary-area}.
\end{proof}

\begin{lemma}\label{gauss-lemma}
Let 
\[
  L_a = \cup_{n\in\ZZ}\{(na, 0)\} \times \RR
\]
be the union of the vertical lines in $M_a$.
Let $L^+_a=L_a\cap \{z>0\}$ and $L_a^-=L_a\cap \{z<0\}$.
The image of $L^+_a$ under the Gauss map of $M_a$
 is contained in $Q\cup Q'$, where $Q$ and $Q'$ are quarter-circles in the
  equator $E = \Ss^2\cap\{ z = 0\}$. 
  The same holds (with different quarter-circles) for $L^-_a$.
\end{lemma}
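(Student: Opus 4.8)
The plan is to analyze the unit normal along a vertical line $\ell = \{(na,0)\}\times\RR \subset M_a$ and show that as one moves up $\ell$, the normal traces an arc of the equator $E$ that is monotone and of total length at most a quarter-circle. The starting observation is that $M_a$ is invariant under $180^\circ$ rotation $R_\ell$ about $\ell$ (every translator is invariant under such a rotation about any vertical line it contains). Since $R_\ell$ fixes $\ell$ pointwise and reverses the normal bundle, it sends the Gauss map image along $\ell$ to its antipode; on the other hand the tangent plane $T_p M_a$ at $p\in\ell$ must be invariant under $dR_\ell$, which acts as $-1$ on the horizontal directions and $+1$ on the vertical. The only $2$-planes containing the vertical axis direction and invariant under this involution are the vertical planes through $\ell$; hence $T_pM_a$ is a vertical plane for every $p\in\ell$, and therefore the unit normal $\nn(p)$ is horizontal, i.e.\ $\nn(p)\in E$. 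This shows the Gauss image of all of $\ell$ (not just $L_a^+$) lies in the equator $E$.

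Next I would parametrize: write $\nn(0,0,z) = (\cos\psi(z),\sin\psi(z),0)$ for a continuous function $\psi$. At $z\to+\infty$ the surface $M_a$ is asymptotic to the vertical plane $P\times\RR$ where $P=\{(x,0):0<x<a\}$, whose normal is $\pm\ee_2$; at $z\to-\infty$ it is asymptotic to $N\times\RR$ along a piece with $x<0$, again with normal $\pm\ee_2$. So $\psi(+\infty)$ and $\psi(-\infty)$ differ from one another (the surface ``turns over'' across the line, as $M_a\cap\{y>0\}$ sits on one side above and the analogous sheet sits below). The crucial input is monotonicity of $\psi$. This follows from the graphical structure: $M_a\setminus L_a$ is a graph $z=u_a(x,y)$ over a strip, and by Theorem~\ref{property-theorem} we have $\partial u_a/\partial x>0$ on $-a/2<x<a/2$; equivalently, along the line $x=0$ the height $u_a$ is strictly increasing in $x$ at fixed $y$, which forces the tangent plane along $\ell$ to rotate monotonically as $z$ varies. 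Concretely, the angle $\psi(z)$ is a strictly monotone function of $z$ (this is exactly the angle function that appeared, in the approximating surfaces $M_i$, in the proof of Theorem~\ref{existence-theorem}, where it was observed to be monotone precisely because $M_i\setminus\partial M_i$ is a graph; the same argument applies in the limit).

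Finally I combine these facts. Because $\psi$ is monotone, the Gauss image of $L_a^+ = \ell\cap\{z>0\}$ is a single subarc $\{(\cos\psi,\sin\psi,0): \psi\in(\psi(0^+),\psi(+\infty))\}$ of $E$, and similarly the Gauss image of $L_a^-$ is the complementary-type subarc $\{\psi\in(\psi(-\infty),\psi(0^-))\}$. It remains to see each such arc has length $\le \pi/2$, i.e.\ is contained in a quarter-circle. For $L_a^+$: one endpoint is the normal at the origin, which by the normalization $\Tan(M_a,O)=\{yz\text{-plane}\}$ is $\pm\ee_1$; the other endpoint is the asymptotic normal $\pm\ee_2$ at $z=+\infty$. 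These two endpoints are $\pi/2$ apart on $E$, and monotonicity of $\psi$ means the arc between them does not wrap further, so it is exactly a quarter-circle $Q$ (in particular contained in one). The same reasoning with the $z<0$ end gives a second quarter-circle $Q'$ containing the Gauss image of $L_a^-$, with endpoints again $\pm\ee_1$ (at the origin) and the asymptotic normal of $N\times\RR$, which is $\pm\ee_2$. The main obstacle is pinning down the monotonicity of $\psi$ rigorously and identifying the precise endpoints; once those are in hand, the quarter-circle conclusion is immediate. I would present the monotonicity by quoting Theorem~\ref{property-theorem} together with the graphical-limit argument used in Theorem~\ref{existence-theorem}, rather than redoing it from scratch.
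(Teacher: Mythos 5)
Your treatment of a single line is essentially the paper's argument: along a vertical line contained in $M_a$ the tangent plane is vertical, the graph property of $M_a\setminus L_a$ forces the angle of that plane to vary monotonically in $z$ (this graph argument is the correct mechanism; the appeal to Theorem~\ref{property-theorem} and $\partial u_a/\partial x>0$ is a non sequitur and is not needed), and the endpoints are the $yz$-plane at $z=0$ (by the normalization at the origin) and the $xz$-plane as $z\to+\infty$ (asymptotic to $P\times\RR$), so the Gauss image of $\{(0,0)\}\times(0,\infty)$ is a quarter-circle of $E$.

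There is, however, a genuine gap: you have silently replaced $L_a^+$ by the upper half of the single line through the origin (``the Gauss image of $L_a^+=\ell\cap\{z>0\}$''). In the lemma, $L_a^+=L_a\cap\{z>0\}$ contains the upper halves of \emph{all} the lines $\{(na,0)\}\times\RR$. The lines with $n$ even are translates of the origin line by the period $(2a,0,0)$ and give the same quarter-circle $Q$, but the lines with $n$ odd are not translates of it by the period; they give a second, in general different, quarter-circle. That is precisely why the statement asserts only containment in $Q\cup Q'$ for $L_a^+$: your conclusion that the image of $L_a^+$ lies in a single quarter-circle is false in general, and your assignment of $Q'$ to $L_a^-$ misreads the statement ($L_a^-$ requires its own pair of quarter-circles). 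The repair is short and is what the paper does: run the same argument along $\{(a,0)\}\times(0,\infty)$. By uniqueness, $M_a$ is invariant under reflection in the plane $x=a/2$ (Corollary~\ref{invariance-corollary}), so the tangent plane at $(a,0,0)$ is the plane $\{x=a\}$; the asymptotic plane as $z\to+\infty$ is again the $xz$-plane, and monotonicity of the angle along that line gives a quarter-circle $Q'$ with endpoints $\pm\ee_1$ and $\pm\ee_2$. Periodicity then covers all remaining lines, and the identical reasoning for $z<0$ produces the two (different) quarter-circles containing the image of $L_a^-$.
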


\begin{proof}
Since $M\cap\{y>0\}$ is a graph over a strip,
 $\Tan(M,(0,0,z))$ turns through an angle of $\pi/2$ as $z$ goes from $0$ to $\infty$.
  Thus the image of $\{(0, 0)\}\times (0, \infty)$ under the Gauss Map of $M$ is a quarter-circle $Q$ in $E$.
  Likewise the Gaussian image of $\{(a,0)\}\times (0,\infty)$ is a quarter-circle $Q'$ in $E$.
   By the periodicity, the Gauss map image of $\{(na, 0)\}\times (0, \infty)$ is $Q$ if $n$ is even 
   and is $Q'$ if $n$ is odd. 
\end{proof}

\begin{proposition}\label{curvature-bound}
There is a constant $C < \infty$ (independent of $a$) such that
\[
|A(M_a, p)| \, \min\{1, \dist(p, X)\} \le C.
\]
Here $|A(M,p)|$ is the norm of the second fundamental form with respect to the
Euclidean metric, and $\dist(p,X)$ is the Euclidean distance from p to the $x$-axis $X$.
\end{proposition}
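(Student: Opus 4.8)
The plan is to argue by contradiction using a blow-up (point-picking) argument together with the curvature estimate of Theorem~\ref{main-curvature-estimate}, the area bound of Lemma~\ref{area-bound-lemma}, and the Gauss-map constraint of Lemma~\ref{gauss-lemma}. Suppose the estimate fails: then there are $a_k>0$ and points $p_k\in M_{a_k}$ with $|A(M_{a_k},p_k)|\min\{1,\dist(p_k,X)\}\to\infty$. Away from the vertical lines $L_{a_k}$, the surface $M_{a_k}$ is a graph over a convex domain (a strip), and its boundary curve consists of straight lines; hence Theorem~\ref{main-curvature-estimate} gives a uniform curvature bound at any point whose distance to $L_{a_k}$ is bounded below. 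Therefore the curvature blow-up must occur near the vertical lines, i.e. after horizontally translating so that the relevant vertical line of $M_{a_k}$ is $\{0\}\times\RR$ and vertically translating $p_k$ to height $0$, we have $\dist(p_k,\{0\}\times\RR)\to 0$ while $|A|\dist(p_k,\{0\}\times\RR)\to\infty$. Note that since $M_{a_k}$ is invariant under $180^\circ$ rotation about each of its vertical lines, and (by Theorem~\ref{main-curvature-estimate}) has bounded curvature away from those lines, we may in fact assume $p_k$ lies on $\{0\}\times\RR$ itself, losing only a bounded factor.

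Now rescale: set $\lambda_k=|A(M_{a_k},p_k)|\to\infty$ and consider $\widetilde M_k=\lambda_k(M_{a_k}-p_k)$. These are translators for the rescaled equation (i.e. with translation velocity $\lambda_k^{-1}(-\ee_3)\to 0$), they pass through the origin with $|A|=1$ there, and they contain the vertical line $\{0\}\times\RR$, about which they are symmetric under $180^\circ$ rotation. Because $a_k\lambda_k\to\infty$ in the blow-up (the nearest other vertical line recedes to infinity after rescaling — this needs a brief check, using that the blow-up is centered on a vertical line so the next line is at horizontal distance $\ge a_k$ and $\lambda_k\dist(p_k,\{0\}\times\RR)\to\infty$ forces $\lambda_k a_k\to\infty$), the rescaled surfaces have at most one vertical line in any fixed ball, and off that line they are graphs over convex domains with polygonal (in fact, in the limit, straight-line) boundary. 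Applying Theorem~\ref{main-curvature-estimate} to $\widetilde M_k$ away from $\{0\}\times\RR$ gives uniform local curvature bounds there; combined with the uniform local area bound coming from Lemma~\ref{area-bound-lemma} (which is scale-invariant in the appropriate sense, since both sides of~\eqref{boundary-area} scale like area), we extract a subsequential smooth limit $\widetilde M_\infty$ which is a minimal surface in $\RR^3$ (a translator with zero velocity), properly embedded, passing through the origin with $|A|=1$, containing the line $\{0\}\times\RR$, symmetric under rotation about it, and which off that line is a graph over a convex planar domain with boundary a union of straight lines.

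Finally I would derive a contradiction from the structure of $\widetilde M_\infty$. The Gauss-map constraint of Lemma~\ref{gauss-lemma} passes to the limit: the Gauss image of $\{0\}\times\{z>0\}$ (resp. $\{z<0\}$) lies in a quarter-circle of the equator $E=\SS^2\cap\{z=0\}$, so in particular the normal along the whole line $\{0\}\times\RR$ is horizontal. A complete minimal surface in $\RR^3$ that contains a line, is a graph off that line over a convex domain with line boundary, and has horizontal normal along the line: one shows such a surface must be a plane (for instance, the graph part is a complete minimal graph over a convex domain whose boundary behavior forces it, via the classification of minimal graphs / Bernstein-type reasoning together with the half-space theorem, to be flat; alternatively, use that the two "halves" $y>0$ and $y<0$ are area-minimizing minimal graphs over strips, hence — being complete minimal graphs over strips — must be planar by the classical theorem on minimal graphs over strips). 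But a plane has $|A|\equiv 0$, contradicting $|A|=1$ at the origin. This contradiction proves the proposition.

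The main obstacle I expect is the extraction of the limit $\widetilde M_\infty$ with control \emph{across} the vertical line: away from $L_{a_k}$ everything is handled by Theorem~\ref{main-curvature-estimate}, but to get smooth convergence in a neighborhood of the line one needs the line to persist as a line of bounded curvature in the limit — this is where the $180^\circ$ rotational symmetry about the line, the scale-invariant area bound from Lemma~\ref{area-bound-lemma}, and a boundary-regularity / removable-singularity argument for the minimal surface across the line all have to be combined carefully. The other delicate point is verifying $\lambda_k a_k\to\infty$ so that only one vertical line survives the rescaling; this follows from having centered the blow-up on a vertical line together with $\lambda_k\,\dist(p_k,\{0\}\times\RR)\to\infty$, but should be stated explicitly.
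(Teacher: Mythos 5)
Your setup matches the paper's in outline (contradiction argument, blow-up at points where $|A|\min\{1,\dist(p,X)\}$ is large, extraction of a complete Euclidean minimal surface with $|A|=1$ at the origin), but the proof as written has two substantive gaps, one of them fatal.

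First, the claim that $\lambda_k a_k\to\infty$ is circular as stated and is in any case not needed. Having just moved $p_k$ onto the vertical line $\{0\}\times\RR$, you then invoke ``$\lambda_k\dist(p_k,\{0\}\times\RR)\to\infty$''--- but that distance is now zero. What the hypothesis actually gives is $\lambda_k\dist(p_k,X)\to\infty$ where $X$ is the $x$-\emph{axis}, i.e.\ the rescaled copies $X_k'$ of the $x$-axis recede from the origin. That does \emph{not} force the neighboring vertical lines to recede, and the paper deliberately does not require it: the blow-up limit $M'$ may contain several (even infinitely many) vertical lines. The dichotomy the paper extracts from $\dist(O,X_k')\to\infty$ is a different one, namely that at least one of $\phi_k(L^+_{a_k})$ or $\phi_k(L^-_{a_k})$ recedes to infinity, since $L^\pm$ lie on opposite sides of $X$.

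Second, and more seriously, your final step does not work. You note via Lemma~\ref{gauss-lemma} that the normal along the line is horizontal (which is trivially true of any line contained in a surface) and then assert that a complete minimal surface containing a line, graphical over a convex domain off that line, must be a plane, citing a ``classical theorem on minimal graphs over strips'' and the half-space theorem. Neither of these is applicable: there is no such classification theorem forcing a minimal graph over a strip (or half-plane) to be flat, and the half-space theorem requires the surface to lie in a half-space, which is not established here. Indeed, the paper itself explicitly \emph{conjectures} (and does not prove) that no translator of the form ``graph over a half-plane plus one line'' exists. The idea that actually closes the paper's proof is precisely the one you state but don't use: because exactly one of $L^+$ or $L^-$ survives the rescaling, Lemma~\ref{gauss-lemma} gives $\nu(L)\subset Q\cup Q'$ for the limit, and since each component of $M'\setminus L$ is a graph (so its Gauss image avoids the equator $E$), one gets $\nu(M')\cap E\subset Q\cup Q'$. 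Thus the Gauss map of the complete minimal surface $M'$ misses an arc, so $M'$ is a plane by Osserman's theorem, contradicting $|A(M',O)|=1$. Without the half-equator constraint on $\nu(L)$ (which is the whole point of tracking $\dist(O,X_k')\to\infty$), there is no contradiction.
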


\begin{proof}
Suppose the proposition is false. Then there is a sequence of examples $p_i \in M_{a(i)}$ 
such that
\begin{equation}\label{blowup}
|A(M_{a(i)}, p_i)|\, \min\{1, \dist(p_i, X)\} \to \infty.
\end{equation}
 We may choose $p_i$ to maximize the left-hand side of~\eqref{blowup} since
\[
\text{$|A(M_a,(x,y,z))|  \to  0$ as $z \to \pm \infty$.}
\]
Let $\lambda_i:=|A(M_{a(i)},p_i)|$, and let
\begin{align*}
&\phi_i: \RR^3 \to \RR^3, \\
&\phi_i(p) = \lambda_i(p-p_i).
\end{align*}
Let $M_i'=\phi_i(M_{a(i)})$ and $X_i'=\phi_i(X)$.
Thus 
 \[
   |A(M_i',O)|=1,
\]
where $O$ is the origin in $\RR^3$.
Note that the left side of~\eqref{blowup} is
\[
   \lambda_i \,\min\{ 1, \dist(p_i,X)\} = \min\{\lambda_i, \lambda_i\dist(p_i, X)\} = \min\{ \lambda_i, \dist(O,X_i')\},
\]
so (by~\eqref{blowup})
 \begin{equation}\label{X-goes-away}
   \dist(O,X_i') \to \infty.
\end{equation}
Note that~\eqref{X-goes-away} implies (perhaps after passing a  subsequence) that
\begin{equation}\label{either-or}
\begin{gathered}
\text{$\dist(O, \phi_i(L^+_{a(i)})) \to \infty$, or} \\
\dist(O, \phi_i(L^-_{a(i)})) \to \infty,
\end{gathered}
\end{equation}
where $L^\pm_{a(i)}$ are as in Lemma~\ref{gauss-lemma}.
 After passing to a further subsequence, the $M_i'$ converge smoothly to a complete (Euclidean) 
 minimal surface $M'$ with 
 \begin{equation}\label{not-flat}
 |A(M',O)| = 1.
 \end{equation}
  Let $\nu$ be the Gauss map of $M'$.

By~\eqref{either-or} and Lemma~\ref{gauss-lemma}, 
\[
\nu(L) \subset Q \cup Q'
\]
 for certain quarter circles $Q$ and $Q'$ in $E$, 
  where $L$ 
  is the union of the vertical lines in $M'$. Since each component of $M'\setminus L$ is a graph,
  \[
     \nu(M')\cap E = \nu(L)\cap E.
  \]
 Thus
\[
  \nu(M') \cap E \subset Q \cup Q'.
\]
But this is impossible since a complete minimal surface whose Gauss map misses
an arc must be a plane~\cite{osserman-book}[Theorem~8.2], contradicting~\eqref{not-flat}.
(Indeed, if the Gauss map misses even  $5$ points,
then the surface is a plane~\cite{fujimoto}.)

(In our setting, one can use more elementary results than~\cite[Theorem~8.2]{osserman-book} or
\cite{fujimoto}.  Note that the $M'$ above has quadratic area growth by monotonicity.
Using that, it is not hard to prove (as in~\cite{qing-chen} or~\cite{li-quadratic}) 
that the quotient $\widetilde M'$ of $M'$ by the period has finite total curvature,
from which it follows that $\widetilde M'$ is conformally a punctured Riemann surface, that the Gauss
map extends smoothly to the punctures 
(see for instance~\cite[\S9]{osserman-book} or~\cite[Theorem 17]{white-parkcity}),
 and hence that the Gauss map can miss at most finitely many
points in $\Ss^2$.)
\end{proof}

\begin{proof}[Proof of Theorem~\ref{to-zero-theorem}]
Recall (by Theorem~\ref{existence-theorem}) that $b(a)\le \pi$ for each $a>0$ and that $b(\cdot)$ 
is an increasing function.  Thus
\begin{equation}\label{less-than-pi}
 b:=\lim_{a\to 0}b(a) \le \pi.
\end{equation}

Let $a(i)$ be a sequence converging to $0$.
Fix a small $\eps>0$ and  let $(0,\eps,z_i)$ be the unique point of intersection of $M_{a(i)}$
and the line $\{(0,\eps)\}\times \RR$.
By passing to a subsequence, we can assume that $z_i$ converges to a limit $\hat{z}\in [-\infty,\infty]$
and that the surfaces $M_{a(i)}-(0,0,z_i)$ converge smoothly away from the line
$X-(0,0,\hat{z})$ to a limit translator $M'$.  (If $\hat{z}=\pm \infty$, then the convergence
is smooth everywhere.)
 Since $a(i)\to 0$, $M'$ is translation invariant in the $x$-direction.
 
 Let $\Sigma$ be the component of $M'\cap\{y>0\}$ containing $(0,\eps,0)$.
 Then $\Sigma$ is either the plane $\{y=\eps\}$ or $G\cap\{y>0\}$ for some grim reaper surface $G$.
Since $M'$ contains the line $Z$, $M'$ also contains the entire $xz$-plane $\{y=0\}$.

We claim that $\Sigma$ is not the plane $\{y=\eps\}$ (provided $\eps$ is sufficiently small).
For if it were, then by symmetry $M'$ would contain the plane $\{y=-\eps\}$.
Let $\Rr=(0,1)\times (-\eps,\eps)$ and $\Omega=\Rr\times (0,\infty)$.
By Lemma~\ref{area-bound-lemma},
\begin{equation}\label{area-Omega}
  \area_g(M'\cap \overline{\Omega}) 
  \le \area_g(\Omega\cap\{y=0\}) + \frac12 \area_g(\partial \Omega)
\end{equation}
By direct calculation (or by Lemma~\ref{prelim-lemma}),
\begin{equation}\label{reminder}
   \area_g(\{y=c\}\cap\overline{\Omega}) = 1
\end{equation}
for $|c|\le\eps$.
Since $M'$ contains the planes $y=0$, $y=\eps$, and $y= -\eps$, \eqref{area-Omega} and~\eqref{reminder}
imply 
\begin{align*}
3 
&\le 1 + \frac12\area_g(\partial \Omega)
\\
&= 1 + \frac12( \area_g(\Rr\times\{0\}) + \area_g((\partial \Rr)\times [0,\infty))  )
\\
&= 1 + \frac12(\area(\Rr) + \length(\partial \Rr))
\\
&= 1 + \frac12(2\eps + 2 + 4\eps)
\\
&= 2 + 3\eps
\end{align*}
so $\eps\ge \frac13$.

Thus by choosing $\eps<\frac13$, we guarantee that $\Sigma$ is not a vertical plane.
Consequently $\Sigma$ has the form
\begin{equation}\label{the-form}
  \Sigma = \graph(f)\cap \{y>0\}
\end{equation}
where
\[
   f(x,y)= \log(\cos(y-y_0)) + c
\]
for some $y_0>0$ and $c$.
Since $\Sigma\subset \{0<y<b\}$ and $b\le \pi$, we see that 
\[
   y_0\le  b - \pi/2 \le \pi/2.
\]

We claim that $y_0< \pi/2$.   For if $y_0=\pi/2$, then $\Sigma$ would be a grim
reaper surface defined over the strip $0<y<\pi$.  Thus $M'$ would be the union
of the $xz$-plane, the grim reaper surface $\Sigma$, and the image of $\Sigma$ under
reflection in the $xz$-plane. 
 The limit $M''$ of $M'+(0,0,\lambda)$ as $\lambda\to\infty$
would then consist of the planes $y=\pm \pi$ with multiplicity $1$ and the plane $y=0$
with multiplicity $3$.  However, this is impossible.
We showed above that $M'$ cannot contain the planes $y=0$, $y=\eps$, and $y= -\eps$
if $\eps$ is small.
Exactly the same proof shows that $M''$ cannot contain the plane $y=0$ with multiplicity~$3$.
This completes the proof that
$y_0< \pi/2$.

Since $y_0<\pi/2$, we see that 
$\graph(f)$ (in~\eqref{the-form}) intersects the plane $y=0$ in a horizontal straight line $L$.
Since $\graph(f)\cap\{y>0\}$ and the $xz$-plane are both contained in $M'$,
we see that the convergence of $M_{a(i)} - (0,0,z_i)$ to $M'$ is not smooth
along $L$.  Thus $L=X-(0,0,\hat{z})$.  In particular, $|\hat{z}|<\infty$.

Thus $M_{a(i)}$ converges to $M:=M'+(0,0,\hat{z})$.
Note that $M$ consists of the $xz$-plane together with $G$ and $G'$
where $G$ is the $\{y\ge 0\}$ portion of a grim reaper surface that contains $X$
and where $G'$ is the image of $G$ under reflection in the $xz$-plane.

We claim that $G$ is exactly half of a grim reaper surface, so that
$G\cup G'$ is a grim reaper surface.
For readers familiar with varifolds, this is an immediate consequence of the
fact that $M$ (a limit of stationary varifolds) is a stationary varifold.

Here is a different proof.  Suppose $G$ were not half of a grim reaper.
Then the tangent cone $C$ to $M$ at the origin would consist of the $xz$-plane
together with 
\[
   \{ (x,y,z):   z = |y|/c \}
\]
for some constant $c\ne 0$.  Assume $c<0$. (The case $c>0$ is handled in the same way.)
Let $L$ be large and let
\[
  \Omega=\Omega_L = \{(x,y,z): \text{$0\le x\le L$ and $-1\le z \le |y|/c$} \}.
\]
Note that
\[
  \area(\partial \Omega) = 2(1+c^2)^{1/2}L + 2cL + c.
\]
Thus by Lemma~\ref{area-bound-lemma},
\begin{align*}
\area(M\cap\overline{\Omega}) 
&\le
\area(\Omega\cap\{y=0\}) + \frac12(\area(\partial \Omega)
\end{align*}
i.e.,
\[
2(1+c^2)^{1/2}L + L 
\le
L + \frac12(2(1+c^2)^{1/2}L + 2cL + c)
\]
or
\[
(1+c^2)^{1/2}L \le cL + \frac12c.
\]
Dividing by $L$ and then letting $L\to\infty$ gives $(1+c^2)^{1/2}\le c$, a contradiction.
\end{proof}

\begin{theorem}\label{pi-over-two-theorem}
$\lim_{a\to 0}b(a)=\pi/2$.
\end{theorem}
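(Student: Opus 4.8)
The plan is to deduce $\lim_{a\to0}b(a)=\pi/2$ from the convergence result in Theorem~\ref{to-zero-theorem} together with the width bounds already in hand. Recall from~\eqref{less-than-pi} that $b:=\lim_{a\to0}b(a)$ exists and satisfies $b\le\pi$, and from the Main Existence Theorem that $b(a)>0$. The key geometric fact is that, in the proof of Theorem~\ref{to-zero-theorem}, the grim reaper piece $\Sigma=\graph(f)\cap\{y>0\}$ with $f(x,y)=\log(\cos(y-y_0))+c$ lives over the strip $0<y<b$ and has a vertical boundary (the line $L=X-(0,0,\hat z)$) exactly where $y-y_0=-\pi/2$, i.e. at $y=0$. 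Since the domain of $f$ as a translator graph is the maximal strip $y_0-\pi/2<y<y_0+\pi/2$, and since $\Sigma$ occupies $0<y<b$ with lower edge at $y=0$ forced to be $y_0-\pi/2=0$, we get $y_0=\pi/2$ — but this was ruled out in the proof of Theorem~\ref{to-zero-theorem}. So I must instead extract the correct relation: the limit surface $M$ is the $xz$-plane union a \emph{full} grim reaper surface $G\cup G'$, which is defined over the strip $\{|y|<\pi/2\}$.

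First I would argue $b\le\pi/2$. The surfaces $M_{a(i)}$ converge to $M$, and $\Pi(M_{a(i)})=\RR\times(-b(a_i),b(a_i))$ while $\Pi(M)=\RR\times(-\pi/2,\pi/2)$ (the projection of the $xz$-plane is $\RR\times\{0\}$, and the projection of the complete grim reaper $G\cup G'$ is the open strip $\RR\times(-\pi/2,\pi/2)$, by the explicit description at the end of the proof of Theorem~\ref{to-zero-theorem}). Since the width of $\Pi(M_{a(i)})$ cannot exceed, in the limit, the width of $\Pi(M)$ — more precisely, any point of $M$ is a limit of points of $M_{a(i)}$, so $\sup\{|y|: (x,y,z)\in M\}\le \liminf b(a_i)=b$, while one also has $b=\limsup b(a_i)\le \sup\{|y|:(x,y,z)\in M_{a(i)}\text{ eventually}\}$; taking the limit of the $y$-ranges gives $b=\pi/2$ directly once both inclusions of projections are established. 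Concretely: $b(a)\le\pi/2$ would follow if we knew $M_a$ stays in $\{|y|<\pi/2\}$, which is false for large $a$; so the argument really is the limiting one. Let me restate it cleanly: since $M_{a(i)}\to M$ smoothly away from $X$ and $M\subset\{|y|<\pi/2\}\cup(\text{line }L)$, and $L\subset\{y=0\}$, any sequence $q_i\in M_{a(i)}$ with $|y(q_i)|\to b(a_i)$ has a subsequential limit in $M$, hence $b=\lim|y(q_i)|\le\pi/2$. Conversely, the complete grim reaper $G\cup G'\subset M$ has points with $|y|$ arbitrarily close to $\pi/2$; each such point is a limit of points in $M_{a(i)}$, which forces $b(a_i)\ge\pi/2-o(1)$, hence $b\ge\pi/2$.

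The main obstacle is making the two one-sided limiting inequalities rigorous despite the non-smoothness of the convergence along $X$. For the lower bound $b\ge\pi/2$ this is harmless: points of $G\cup G'$ near $|y|=\pi/2$ are far from $X$, so the smooth convergence applies there. For the upper bound $b\le\pi/2$ one must rule out a ``hidden'' sliver of $M_{a(i)}$ with $|y|$ between $\pi/2$ and $b$ that escapes to $z=\pm\infty$ in the limit; but this is precisely controlled because $M_a\cap\{y>0\}$ is a graph over the strip $\RR\times(0,b(a))$ whose upper boundary $\{y=b(a)\}$ has the curvature estimate of Proposition~\ref{curvature-bound} and Theorem~\ref{main-curvature-estimate}, so the whole strip (up to vertical translation) participates in the smooth limit — and the limit occupies only $\{0<y<\pi/2\}$. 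Once this is pinned down, combining $b\le\pi/2$ and $b\ge\pi/2$ gives $\lim_{a\to0}b(a)=\pi/2$.
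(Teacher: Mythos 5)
Your lower bound $\lim_{a\to 0}b(a)\ge \pi/2$ is sound: points of the limiting grim reaper with $|y|$ near $\pi/2$ lie at positive distance from the $x$-axis, so the smooth convergence in Theorem~\ref{to-zero-theorem} applies there and forces $b(a(i))\ge \pi/2-o(1)$. The genuine gap is in the upper bound. Your first formulation --- that any $q_i\in M_{a(i)}$ with $|y(q_i)|\to b(a(i))$ has a subsequential limit in $M$ --- is false: since $u_{a(i)}(x,y)\to-\infty$ as $y\to b(a(i))$, such points necessarily have $z(q_i)\to-\infty$, so they leave every compact set and are invisible in the (non-recentered) limit. Your fallback, that by Proposition~\ref{curvature-bound} and Theorem~\ref{main-curvature-estimate} ``the whole strip (up to vertical translation) participates in the smooth limit --- and the limit occupies only $\{0<y<\pi/2\}$,'' does not close this: the curvature estimates do give subsequential smooth limits of $M_{a(i)}-(0,0,z_i)$ for arbitrary recentering heights $z_i$, but the containment in $\{|y|<\pi/2\}$ is known only for the fixed limit $M$ of Theorem~\ref{to-zero-theorem}. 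A recentered limit obtained by following the escaping sliver near $y=b(a(i))$ could a priori be, say, a vertical plane $\{y=c\}$ or some other complete translator with $\pi/2<c\le b$; that is exactly the scenario you must exclude, and nothing you cite excludes it.

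What the paper does (its one-line proof says the argument is ``exactly like'' that of Theorem~\ref{dependence-theorem}) is to rule this out by a maximum-principle comparison with a shifted graph. Suppose $b:=\lim b(a(i))>\pi/2$ along some sequence $a(i)\to 0$. On $\{y>0\}$ the $u_{a(i)}$ converge, smoothly away from $X$, to $g(y)=\log\cos y$ on $\RR\times(0,\pi/2)$, and $g'(y)\to-\infty$ as $y\uparrow\pi/2$. Choose $\beta<\beta'<\pi/2$ with $\delta:=\beta'-\beta<b-\pi/2$ and, for large $i$, $\max_x \partial_y u_{a(i)}(x,\beta')<g'(\beta)$ (smooth convergence on the line $\{y=\beta'\}$, made uniform in $x$ by the $2a(i)$-periodicity; here, unlike in Theorem~\ref{dependence-theorem}, no rationality trick is needed since $g$ is $x$-independent). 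Then $f(x,y):=g(y)-u_{a(i)}(x,y+\delta)$ is defined on all of $\RR\times[\beta,\pi/2)$ because $y+\delta<\pi/2+\delta<b(a(i))$ for large $i$; it is periodic in $x$, satisfies $\partial_y f>0$ on the lower edge, and tends to $-\infty$ at the upper edge, so it attains an interior maximum, contradicting the strong maximum principle for the difference of two solutions of the translator equation. Some such quantitative comparison is the missing ingredient in your proposal; with it, together with your lower bound, the theorem follows.
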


The proof is exactly like the proof of Theorem~\ref{dependence-theorem}, so we omit it.

\section{Behavior as $a\to\infty$}\label{to-infinity-section}

\begin{theorem}\label{to-infinity-theorem}
Every sequence of real numbers tending to infinity has a subsequence $a(i)$
for which $M_{a(i)}$ converges smoothly to a limit translator $M$.
If $M$ is a such a limit, then $M$ contains
$Z$, 
$M$ is tangent to the $xz$-plane at the origin,  and $M\cap\{y>0\}$ 
is the graph of a function $u:\RR\times(0,\pi)\to \RR$ with boundary values
\begin{align*}
u(x,0) &= -\infty \quad\text{if $x<0$}, \\
u(x,0) &= \infty \quad\text{if $x>0$}, \\
u(x,\pi) &= -\infty \quad\text{for all $x$}.
\end{align*}
As $\hat{x}\to -\infty$,
\[
     u(\hat{x}+x,y)-u(\hat{x},y)
\]
converges smoothly to $\log(\sin y)$.
If $y_i\in (0,\pi)$ and $x_i\to \infty$, then 
\[
  (M\cap\{y\ge 0\}) - (x_i,y_i,u(x_i,y_i))
\]
converges smoothly to plane $x=0$.
Furthermore, $M$ is negatively curved everywhere, and the Gauss map
induces a diffeomorphism from $M\cap\{y\ge 0\}$ onto
\[
    (\Ss^2\cap\{z\ge 0\}) \setminus \{\ee_2\}.
\]
\end{theorem}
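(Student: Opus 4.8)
The plan is to run the compactness-and-structure argument from the proofs of Theorems~\ref{existence-theorem} and~\ref{dependence-theorem}, and then analyze the two ends of the limit. \emph{Compactness and graphical structure:} By Proposition~\ref{curvature-bound} the second fundamental forms of the $M_a$ are bounded uniformly in $a$ on compact subsets of $\RR^3\setminus X$; and since for a fixed compact set and $a$ large the only vertical line of $M_a$ meeting it is $Z$, Theorem~\ref{main-curvature-estimate} applied to the two graphical halves $M_a\cap\{y>0\}$ and $M_a\cap\{y<0\}$ (for which $Z$ is then the only nearby boundary line) bounds the curvature near the origin as well. With the local area bounds of Lemma~\ref{area-bound-lemma}, standard elliptic compactness yields, along a subsequence $a(i)$ and after a vertical translation, smooth convergence of $M_{a(i)}$ to a translator $M\subset\{|y|\le\pi\}$ containing $Z$. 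Now, exactly as in the proof of Proposition~\ref{first-existence-proposition}: the monotone angle functions along $Z$ pass to a monotone limit, so $M\cap\{y>0\}$ is graphical or a vertical plane, the latter being excluded since $\ee_2$ is tangent along $Z$; passing to vertical limits $M-\lambda\ee_3$ and invoking Lemma~\ref{geodesics-lemma} shows the graph is over a strip $\RR\times(0,\beta)$; monotonicity of the angle function forces boundary value $+\infty$ on $\{y=0,\,x>0\}$ and $-\infty$ on $\{y=0,\,x<0\}$, and a grim reaper barrier forces $u=-\infty$ on $\{y=\beta\}$. Since each $M_{a(i)}$ is invariant under the $180^\circ$ rotation $\rho$ about $Z$, so is $M$, whence $M=\graph(u)\cup Z\cup\rho(\graph(u))$.

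\emph{Width $\to\pi$ and the left end:} As $b(\cdot)$ is increasing and $\le\pi$, $b_\infty:=\lim_{a\to\infty}b(a)$ exists and $\beta=b_\infty\le\pi$. From Theorem~\ref{property-theorem}, $\partial_x u_a>0$ on $(-a/2,a/2)$, which passes to the limit as $\partial_x u\ge 0$ on $\RR\times(0,\beta)$, hence $\partial_x u>0$ by the strong maximum principle ($u$ being nonconstant in $x$); thus $u(\cdot,y)$ is strictly increasing and $u_-(y):=\lim_{x\to-\infty}u(x,y)\in[-\infty,\infty)$. Translating $M$ far to the left and using the uniform curvature bound, a subsequence of the translates converges to an $x$-invariant translator graphical over $\RR\times(0,b_\infty)$ with value $u_-$; being $x$-independent, $u_-$ solves $u_-''=-(1+u_-'^2)$, whose maximal solutions live on intervals of length exactly $\pi$, forcing $b_\infty=\pi$ and $u_-(y)=\log\sin y$ up to a constant. (Equivalently, a sliding grim-reaper-surface barrier on a left translate of $M$ shows that any strip carrying a translator graph with $-\infty$ on both walls has width $\ge\pi$.) This gives the asserted behaviour of $u(\hat x+\cdot,\cdot)$ as $\hat x\to-\infty$, with smooth convergence by the curvature estimates and uniqueness of the limit.

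\emph{Right end, negative curvature, Gauss map:} Since $u$ increases in $x$, $u_+(y):=\lim_{x\to+\infty}u(x,y)\in(-\infty,\infty]$ exists; it cannot be finite on a subinterval (such a limit would be a grim-reaper curve, contradicting $u_+\to+\infty$ as $y\to 0$, which is forced by $u(x,0^+)=+\infty$), so $u\to+\infty$ as $x\to+\infty$ pointwise. For $x_i\to+\infty$ and $y_i\in(0,\pi)$, the bounded-curvature translates $(M\cap\{y\ge 0\})-(x_i,y_i,u(x_i,y_i))$ subconverge to a translator through $O$ lying in a $y$-slab of width $\pi$; one shows $\partial_x u\to\infty$ along $x\to+\infty$, so the limit is vertical, hence $\ell\times\RR$ for a line $\ell$ through $O$, and being trapped in a bounded $y$-slab forces $\ell$ to be the asserted coordinate axis. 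For the Gauss map: on $\graph(u)$ one has $\nu_3>0$, and the Gaussian curvature has the sign of $u_{xx}u_{yy}-u_{xy}^2$; using $\partial_x u>0$ (so the level sets of $u$ are graphs $x=g_c(y)$) together with the grim reaper behaviour on the left and the planar behaviour on the right, one checks the Hessian of $u$ is everywhere indefinite, so $\graph(u)$ is negatively curved, and negative curvature along $Z$ is a direct computation. Hence $\nu$ is an immersion; it is injective (the sign of $\partial_x u$ fixes the sign of $\nu_1$ on $\graph(u)$, and one propagates injectivity along the monotone direction); and reading off the limiting normals at $\{y=0\}$ and $\{y=\pi\}$, at $x\to\pm\infty$, and along $Z$ identifies the image with the stated hemisphere minus $\ee_2$. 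Properness of $\nu$ onto that set, together with injectivity and immersivity, makes $\nu$ a diffeomorphism.

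I expect the crux to be the third paragraph: establishing $\partial_x u\to\infty$ as $x\to+\infty$ (equivalently, ruling out every other translator as the right-end blow-down limit) and pinning down the precise Gauss image require global control of $u$, unlike the local barrier and maximum-principle arguments that handle compactness, the graphical structure, and the left end. A secondary point requiring care is the passage to the $x$-invariant limit in the proof that $b_\infty=\pi$ --- in particular, checking that no wall of the strip is lost in that limit.
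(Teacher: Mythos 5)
Your first paragraph is essentially the paper's own argument: the paper obtains the subsequential limit, its graphical structure over a strip $\RR\times(0,\beta)$ with $\beta\le\lim b(a(i))\le\pi$, and the boundary values on $\{y=0\}$ exactly as in the proof of Theorem~\ref{existence-theorem} (curvature estimate of Theorem~\ref{main-curvature-estimate}, monotone angle function along $Z$, Lemma~\ref{geodesics-lemma}, grim reaper barriers). But at that point the paper stops: it quotes Theorem~8.1 of \cite{scherkon} for $u(\cdot,\beta)\equiv-\infty$ and Theorem~12.1 of \cite{scherkon} for everything else --- that the width is exactly $\pi$, the asymptotics at the two ends, everywhere-negative curvature, and the Gauss map statement. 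Those are substantial theorems about pitchforks proved in the companion paper; your second and third paragraphs are an attempt to reprove them, and that is where the genuine gaps lie.

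Concretely: (i) the right-end analysis rests on ``one shows $\partial_x u\to\infty$,'' which is precisely the statement to be proved and for which no argument is offered; (ii) ``one checks the Hessian of $u$ is everywhere indefinite'' is not a proof, and no soft argument from $\partial_x u>0$ together with the end behavior can yield it --- the tridents $M_a$ themselves satisfy $\partial_x u_a>0$ on the half-period and interpolate between grim-reaper-like and vertical-plane-like regions, yet they have points of positive Gauss curvature (Corollary~\ref{mixed-curvature}); so negative curvature of the limit requires a genuinely global argument, which is part of what Theorem~12.1 of \cite{scherkon} supplies; (iii) the Gauss-map diffeomorphism is likewise only asserted (``one propagates injectivity along the monotone direction''); (iv) in the width/left-end step, if $u_-\equiv-\infty$ you must renormalize vertically, and then the limit of the left translates is no longer automatically $x$-invariant, nor is it clear that both walls of the strip survive in the limit with boundary value $-\infty$ --- you flag this yourself; the natural repair is to identify the renormalized limit via the classification of complete graphical translators in slabs \cite{himw}, or simply to invoke the pitchfork results of \cite{scherkon}, as the paper does. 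As it stands, the proposal reproduces the routine compactness half of the paper's proof and leaves the hard half --- exactly the content the paper imports from \cite{scherkon} --- at the level of claims.
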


\begin{proof}
Exactly as in the proof of the Main Existence Theorem~\ref{existence-theorem},
there is a subsequence $a(i)$ for which $M_{a(i)}$ converges smoothly  to a smooth surface $M$
such that $M$ contains $Z$, such that 
 $M$ is tangent to the $yz$-plane at the origin, and such that $M\cap\{z>0\}$
is the graph of a function $u$ defined on a strip $S:=\RR\times (0,\beta)$ where
\[
  \beta \le \lim b(a(i)) \le \pi.
\]
On each component of $(\partial S)\setminus\{(0,0)\}$, $u$ is either $+\infty$ or $-\infty$.
Note that $\Tan(M,(0,0,z))$ rotates clockwise as $z$ increases (because each $M_{a(i)}$ has
that property), so
\[
u(x,0) 
= 
\begin{cases}
-\infty &\text{for $x<0$, and} \\
+\infty &\text{for $x>0$}.
\end{cases}
\]
By Theorem~8.1 in~\cite{scherkon}, $u(x,\beta)=-\infty$. 
The remaining assertions are proved in Theorem~12.1 of~\cite{scherkon}.
\end{proof}

\begin{corollary}\label{to-infinity-corollary}
$\lim_{a\to\infty}b(a)=\pi$.
\end{corollary}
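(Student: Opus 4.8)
The plan is to combine the monotonicity of $b(\cdot)$, already established in Theorem~\ref{existence-theorem}, with the structure of the subsequential limits furnished by Theorem~\ref{to-infinity-theorem}. Since $b(\cdot)$ is increasing and bounded above by $\pi$, the limit $L:=\lim_{a\to\infty}b(a)$ exists and satisfies $L\le\pi$. It therefore suffices to produce a single sequence $a_n\to\infty$ along which $b(a_n)\to\pi$, for then $L$, being the limit of $b(a_n)$, equals $\pi$.

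First I would fix any sequence $a_n\to\infty$ and invoke Theorem~\ref{to-infinity-theorem} to pass to a subsequence $a(i)$ such that $M_{a(i)}$ converges smoothly to a limit translator $M$ with $M\cap\{y>0\}$ equal to the graph of a function $u\colon\RR\times(0,\pi)\to\RR$. The essential point extracted from that theorem is that the limit surface spreads across the \emph{full} strip $\RR\times(0,\pi)$: for every $y_0\in(0,\pi)$ the point $(0,y_0,u(0,y_0))$ lies on $M$.

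Next I would use the smooth convergence $M_{a(i)}\to M$ to transfer this information back to the $M_{a(i)}$. For each fixed $y_0\in(0,\pi)$ there are points $(x_i,y_i,z_i)\in M_{a(i)}$ converging to $(0,y_0,u(0,y_0))$, so $y_i\to y_0>0$. Since $M_{a(i)}\cap\{y>0\}$ is the graph of $u_{a(i)}$ over the strip $\RR\times\bigl(0,b(a(i))\bigr)$, any such $y_i$ with $y_i>0$ lies in $\bigl(0,b(a(i))\bigr)$, whence $b(a(i))>y_i$ for all large $i$ and $\liminf_i b(a(i))\ge y_0$. Letting $y_0\uparrow\pi$ gives $\liminf_i b(a(i))\ge\pi$, and together with $b(a(i))\le\pi$ this forces $b(a(i))\to\pi$. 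As $\bigl(b(a(i))\bigr)_i$ is a subsequence of the convergent sequence $\bigl(b(a_n)\bigr)_n\to L$, we conclude $L=\pi$, which is the assertion.

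The only real obstacle is knowing that width is not lost in passing to the limit, i.e.\ that the limit surface really is a graph over $\RR\times(0,\pi)$ rather than over a narrower strip; but this is precisely what Theorem~\ref{to-infinity-theorem} provides. All of the delicate work — the uniform curvature estimate, the identification of the asymptotic behavior, and the exclusion of a limiting width $\beta<\pi$ — has been packaged into that theorem, so that what remains here is only the short comparison-of-widths argument outlined above.
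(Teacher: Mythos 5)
Your proof is correct and follows the same route as the paper's (which is stated in one line): combine the monotonicity of $b(\cdot)$ from Theorem~\ref{existence-theorem} with the fact, from Theorem~\ref{to-infinity-theorem}, that any subsequential limit of $M_a$ as $a\to\infty$ is a graph over the full strip $\RR\times(0,\pi)$. You merely spell out the width-comparison step that the paper leaves implicit.
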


The corollary follows immediately from Theorem~\ref{to-infinity-theorem} and the fact that $b(a)$ is an
increasing function of $a$ (by Theorem~\ref{existence-theorem}).

\begin{conjecture}\label{pitchfork-conjecture}
There is a unique surface $M$ satisfying the conclusions of Theorem~\ref{to-infinity-theorem}.
\end{conjecture}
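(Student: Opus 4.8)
Since the conclusions of Theorem~\ref{to-infinity-theorem} force $M$ to contain the $z$-axis $Z$, $M$ is invariant under $180^\circ$ rotation about $Z$, so $M$ is the union of $\graph(u)$ over the strip $S:=\RR\times(0,\pi)$, its image under $(x,y,z)\mapsto(-x,-y,z)$, and $Z$ itself (an extra vertical line in $M$ would, together with $Z$, generate a translation, contradicting the asymptotics). Moreover the normal along $Z$ rotates monotonically, so requiring $M$ to be tangent to the $xz$-plane at the origin selects $u$ among its vertical translates. Hence the Conjecture is equivalent to an \emph{infinite-strip uniqueness statement}: if $u,v:S\to\RR$ both solve the translator equation, both take the boundary values $-\infty$ on $(-\infty,0)\times\{0\}$ and on $\RR\times\{\pi\}$ and $+\infty$ on $(0,\infty)\times\{0\}$, and both have the two asymptotic behaviors of Theorem~\ref{to-infinity-theorem} (renormalized graphs converging to $\log(\sin y)$ as $\hat x\to-\infty$; blow-ups converging to the plane $\{x=0\}$ as $x_i\to\infty$), then $u-v$ is constant. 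The plan is to run the proof of the Uniqueness Theorem~\ref{uniqueness-theorem}, with the role played there by periodicity (Lemma~\ref{tweedledum} and Remark~\ref{bounded-remark}) taken over by estimates at the two ends $x\to\pm\infty$.

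First I would normalize as in Theorem~\ref{uniqueness-theorem}: add a constant to $u$ so that $\Gamma:=\{u=0\}$ is an embedded curve properly dividing $S$ into $\{u<0\}$ and $\{u>0\}$, with $\partial u/\partial y$ of a fixed sign on suitable collars of the arcs where $u=\pm\infty$; the asymptotics force the ends of $\Gamma$ to escape only through $x\to\pm\infty$. Since $M\cap\{y>0\}$ is a graph over a convex domain it is $g$-area-minimizing there, so the cut-and-paste machinery (Lemma~\ref{cut-paste}) and Lemma~\ref{topsy} are available. Boundary regularity at $Z$ (as in Remark~\ref{bounded-remark}) controls $v$ on the part of $\Gamma$ near the origin, and the two asymptotic descriptions should give that $v$ is bounded below on all of $\Gamma$; after adding a constant to $v$ we may then assume $\inf_\Gamma v=0$, which (exactly as in Theorem~\ref{uniqueness-theorem}) forces $u,v\ge0$ on the complement of $\{u<0\}$, so $\{v\le0\}\subset\{u\le0\}$. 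Next I would propagate $v\ge u$ into $\{u<0\}$ and into $\{u>0\}$ by sliding $\graph(u)$ vertically against $\graph(v)$, using Lemmas~\ref{tweedledee}, \ref{tweedledum}, and~\ref{topsy} in their non-compact form — the new subtlety being that the supremal sliding parameter $s$ with $(\graph(u)+s\ee_3)\cap\graph(v)\ne\emptyset$ is attained, i.e.\ contact does not merely escape to $x=\pm\infty$. Once $v\ge u$ everywhere and the two graphs touch over $\overline{\{u=0\}}$, the strong (or boundary) maximum principle gives $u\equiv v$.

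The crux is therefore a \textbf{maximum principle at infinity} for $w:=v-u$, which solves a linear uniformly elliptic equation on any region where $\nabla u$ and $\nabla v$ are bounded. The end $x\to-\infty$ should be manageable: there both surfaces are, in shape, asymptotic to the grim reaper $\{z=\log\sin y\}$ over all of $(0,\pi)$, where $|\nabla u|$ and $|\nabla v|$ are bounded away from $y=0,\pi$ (the corners being the familiar infinite-boundary-value corners already present in Theorem~\ref{uniqueness-theorem}); one checks the two vertical offsets are compatible so that $w$ is bounded on $(-\infty,-R)\times(0,\pi)$, and then runs a Phragm\'en--Lindel\"of argument to conclude $w\ge0$ there.

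The \emph{main obstacle} is the end $x\to+\infty$: by the second asymptotic condition $M$ becomes vertical there, so $|u|$ and $|\nabla u|$ blow up and the ellipticity for $w$ degenerates precisely in the bad region, and there is no periodicity to substitute. I expect one must first \emph{sharpen} the description of this end — working in rotated coordinates in which $M$ is locally a graph $x=g(y,z)$ over part of the $yz$-plane, and showing that $g$ is, after a height-dependent horizontal shift, exponentially asymptotic to a single fixed vertical plane, a translator analogue of the asymptotics of the Scherk-type ends in~\cite{scherkon} — and only then compare $g$ with its counterpart. With such a sharpening in hand, a Collin--Krust-type linear-growth estimate for $w$ over the half-strip $(R,\infty)\times(0,\pi)$, together with $w$ vanishing along $y=0$ and $y=\pi$, should force $w$ to be bounded, hence (being a bounded solution of the linear equation that vanishes on the lateral boundary) nonnegative there. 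Patching the two ends to the compact middle then closes the comparison and yields $u\equiv v$, proving the Conjecture.
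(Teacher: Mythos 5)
The statement you are addressing is not a theorem of the paper at all: it is stated as Conjecture~\ref{pitchfork-conjecture} and is explicitly left open (uniqueness of the pitchfork of width $\pi$ is likewise only conjectured in~\cite{scherkon}), so there is no proof in the paper to compare yours against. Your text is also not a proof but a strategy outline, and you candidly flag its unresolved core. The plan itself is sensible --- reduce to a strip-uniqueness statement for $u,v:\RR\times(0,\pi)\to\RR$ and rerun the scheme of Theorem~\ref{uniqueness-theorem} (normalization of the zero set, the sliding comparisons of Lemmas~\ref{tweedledee}/\ref{tweedledum}, the area-based comparison of Lemma~\ref{topsy}, then the strong maximum principle) --- but every place where the paper's argument uses compactness coming from periodicity is exactly where your sketch has a hole, and those holes are the whole difficulty.

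Concretely: (i) the assertion that the supremal sliding parameter $s$ with $(\graph(u)+s\ee_3)\cap\graph(v)\neq\emptyset$ is attained is unproved; in the trident case this follows because everything descends to the compact quotient $K_{a,b}$, whereas here the contact could escape along either end, and ruling that out is precisely a maximum principle at infinity that you assume rather than establish. (ii) The analogue of Remark~\ref{bounded-remark} (boundedness of $v$ on level sets of $u$, which is what allows the normalization $\inf_\Gamma v=0$ and the subsequent touching point) is justified in the paper by boundary regularity at the vertical lines plus compactness; on the noncompact strip it requires control of both surfaces at $x\to\pm\infty$ that you do not have. (iii) Most seriously, at the end $x\to+\infty$ Theorem~\ref{to-infinity-theorem} only says that blow-downs converge to the plane $\{x=0\}$ after recentering at points $(x_i,y_i,u(x_i,y_i))$; this is far weaker than the ``exponentially asymptotic to a single fixed vertical plane after a height-dependent shift'' refinement your Collin--Krust/Phragm\'en--Lindel\"of step would need, and no such refinement is proved here or in~\cite{scherkon}. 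Since the linearized equation for $w=v-u$ degenerates exactly in that region, the comparison cannot be closed without new analysis of that end. So the proposal identifies the right obstruction but does not overcome it; as written it does not prove the conjecture, which remains open.
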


In~\cite{scherkon} (see Theorem~2.3), a surface satisfying $M$ the conclusions of Theorem~\ref{to-infinity-theorem}
is called a {\bf pitchfork of width $\pi$}.  See Figure~\ref{semigraphical-figure}.
%See also \S\ref{semigraphical-section} below.)

If Conjecture~\ref{pitchfork-conjecture} is true, then it is not necessary to pass to a subsequence in Theorem~\ref{to-infinity-theorem}.
Also, in~\cite{scherkon}, such a pitchfork was obtained as a limit of so-called Scherkenoids.
If the uniqueness conjecture is true, then that limit of Scherkenoids is the same
as the surface in Theorem~\ref{to-infinity-theorem} (obtained as a limit of tridents.)

\section{Widths}\label{widths-section}

\begin{theorem} The set $B:=\{b(a): a\in (0,\infty)\}$ is the interval $(\pi/2,\pi)$.
\end{theorem}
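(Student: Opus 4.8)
The plan is to read the statement off from the monotonicity and continuity of $a\mapsto b(a)$ together with the known limiting behavior at the two ends of the parameter range, and then to delete the two endpoints by hand. First I would collect what is already available: by Theorem~\ref{existence-theorem} the function $b\colon(0,\infty)\to(0,\pi]$ is (weakly) increasing and satisfies $b(a)\le\pi$ (the latter is also Corollary~\ref{pi-corollary}); by Theorem~\ref{dependence-theorem} it is continuous; by Theorem~\ref{pi-over-two-theorem}, $b(a)\to\pi/2$ as $a\to0^{+}$; and by Corollary~\ref{to-infinity-corollary}, $b(a)\to\pi$ as $a\to\infty$. Since $b$ is monotone, these limits coincide with $\inf_{a>0}b(a)=\pi/2$ and $\sup_{a>0}b(a)=\pi$; in particular $B\subseteq[\pi/2,\pi]$. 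For the inclusion $(\pi/2,\pi)\subseteq B$ I would argue directly: given $c\in(\pi/2,\pi)$, choose $a_1$ small with $b(a_1)<c$ and $a_2$ large with $b(a_2)>c$, which is possible by the two limits, and apply the intermediate value theorem to the continuous function $b$ on $[a_1,a_2]$ to get an $a$ with $b(a)=c$. (Only weak monotonicity is used here, which is all that is known.)

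It then remains to show the two endpoints are not attained, i.e., that $\pi/2<b(a)<\pi$ for every $a>0$. To exclude $b(a_0)=\pi$ I would argue as in Corollary~\ref{pi-corollary}: this equality would, via the Main Existence Theorem~\ref{existence-theorem}, produce a translator $M_{a_0}$ whose graphical part is $\graph(u_{a_0})$ over $\RR\times(0,\pi)$ with $u_{a_0}\equiv-\infty$ on $\{y=\pi\}$; comparing $u_{a_0}$ with the grim reaper solution $g(x,y)=\log\sin y$ on $(0,\pi)$ and using the strong maximum principle together with unique continuation — the point being, exactly as in Corollary~\ref{pi-corollary}, that a first interior (or boundary) point of contact of $M_{a_0}$ with a vertical translate of the grim reaper would force the two surfaces to coincide, which is absurd since $M_{a_0}$ contains the vertical lines over $(0,0)$ and $(a_0,0)$. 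The new feature to be checked here is that the comparison survives near $\{y=\pi\}$ and near the corner $(0,0)$, where $u_{a_0}$ and $g$ both tend to $-\infty$; this follows from the smooth asymptotics of $M_{a_0}$ to $N_{a_0,\pi}\times\RR$, which pins the difference $u_{a_0}-g$ down to be bounded in those regions. To exclude $b(a_0)=\pi/2$: by monotonicity this would force $b(a)=\pi/2$ for all $a\in(0,a_0]$, hence, by Definition~\ref{alpha-definition}, Proposition~\ref{first-existence-proposition}, and Proposition~\ref{alpha-bounds}, $\alpha(a,\pi/2)=3a$ for all small $a$; equivalently $N_{a,\pi/2}\times[0,\infty)$ would be area\-minimizing. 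I would contradict this by exhibiting, for small $a$, a competitor with the same boundary $\Gamma_{a,\pi/2}$ of area strictly less than $3a$, obtained by replacing a neighborhood of the edge $\{y=\pi/2\}\times\{0\}$ in a standard competitor with a piece of grim reaper that dips below height $0$ and is asymptotic to the vertical plane $\{y=\pi/2\}$; this saves area because that plane is precisely the asymptotic limit of the grim reaper.

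The step I expect to be the main obstacle is this last one: the naive competitor $\Sigma=(K_{a,\pi/2}\times\{0\})\cup(P\times[0,\infty))$ has area $(\pi+1)a>3a$, and the bound $b(a)\ge 1$ coming from it is too weak (since $1<\pi/2$), so one genuinely needs a cleverer surface — or, as an alternative, a barrier argument showing that for $b$ slightly larger than $\pi/2$ the cylinder $N_{a,b}\times[0,\infty)$ fails to be area\-minimizing, which by Proposition~\ref{alpha-bounds} would give $b(a)\ge b>\pi/2$. The upper\-endpoint exclusion, by contrast, is essentially a rerun of Corollary~\ref{pi-corollary} and should be routine once the asymptotics at $\{y=\pi\}$ are recorded; and the surjectivity onto $(\pi/2,\pi)$ is a one\-line application of the intermediate value theorem.
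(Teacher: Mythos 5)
Your first step (monotonicity, continuity, the two limits, and the intermediate value theorem, giving $(\pi/2,\pi)\subset B\subset[\pi/2,\pi]$) is exactly the paper's argument. The theorem, however, lives or dies on excluding the endpoints, and there your proposal has genuine gaps. For the upper endpoint, the claim that $b(a_0)=\pi$ can be ruled out by ``a rerun of Corollary~\ref{pi-corollary}'' does not go through. In Corollary~\ref{pi-corollary} the comparison works because the minimal surface lies in $\{z\ge 0\}$ while $\log\sin y\le 0$, so the comparison function tends to $-\infty$ at every end of the surface and its supremum is attained at an interior point. For a complete trident of width $\pi$, both $u_{a_0}$ and $g(x,y)=\log\sin y$ tend to $-\infty$ along $\{y=\pi\}$ and along part of $\{y=0\}$, and both are asymptotic to the \emph{same} vertical planes there; even if $u_{a_0}-g$ is bounded in those regions (which you assert but which does not follow merely from the smooth convergence of $M_{a_0}-(0,0,\lambda)$ to $N\times\RR$), the supremum of the comparison function may be approached only as $y\to\pi$ (i.e.\ at infinity on the surface), and then the sliding/tangency argument produces no interior contact: passing to limits of vertical translates just gives planes touching planes. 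This is precisely why the paper does not argue this way; it proves $b(a)<\pi$ (Theorem~\ref{upper-sharp}) by restricting to the symmetry line $x=-a/2$, using Theorem~\ref{property-theorem} ($u_{xx}(-a/2,y)>0$, itself a nontrivial approximation argument) to show that $w(y)=u(-a/2,y)$ is a \emph{strict} supersolution of the one-dimensional translator equation, and then invoking Lemma~\ref{super}, a planar sliding lemma for such supersolutions with $-\infty$ boundary values. Your sketch supplies no substitute for this mechanism.

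For the lower endpoint you yourself identify the step as the main obstacle and leave it essentially open: no competitor with a verified area bound is produced. Moreover, the competitor you describe (``a piece of grim reaper that dips below height $0$'') is not admissible for $\alpha(a,\pi/2)$ as defined in Definition~\ref{alpha-definition}, which only allows surfaces in $K_{a,\pi/2}\times[0,\infty)$; also, reducing to small $a$ via monotonicity does not make the construction any easier. The paper's Theorem~\ref{sharp} does the real work here: it first shows $\alpha(a,b)<\area(G(b))+\area([0,a]\otimes[0,\infty))$ for $b<\pi/2$ using the piecewise-smooth comparison $G(b)\cup\Sigma(b)$, upgrades this to $b$ slightly larger than $1$ by continuity of $\alpha$ (Proposition~\ref{alpha-bounds}), and then assembles a new competitor for $\Gamma_{a,\pi/2}$ --- a vertically shifted least-area surface $S+\tau\ee_3$, a rectangle, and a low portion of a grim reaper of width $b'<\pi/2$, all staying in $\{z\ge 0\}$ --- whose total area is strictly less than $3a$. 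Without an argument of this kind (or your alternative barrier argument actually carried out), and without a correct proof that $b(a)<\pi$, the proposal establishes only $(\pi/2,\pi)\subset B\subset[\pi/2,\pi]$, not the stated equality.
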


\begin{proof}
Recall that $b(\cdot)$ is a continuous, increasing function
 (by Theorems~\ref{existence-theorem} and~\ref{dependence-theorem}),
 that $\lim_{a\to 0}b(a)=\pi/2$ (by Theorem~\ref{pi-over-two-theorem}),
and that $\lim_{a\to\infty}b(a)=\pi$ (by Corollary~\ref{to-infinity-corollary}).  
Hence
\[
   (\pi/2,\pi) \subset B \subset [\pi/2, \pi].
\]
By Theorems~\ref{sharp} and~\ref{upper-sharp} below, $b(a)$ cannot
take the values $\pi/2$ or $\pi$. 
\end{proof}

\begin{theorem}\label{sharp} For every $a$, $b(a)>\pi/2$.
\end{theorem}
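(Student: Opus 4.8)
The plan is to show that $b(a)$ never equals $\pi/2$. Since $b(\cdot)$ is increasing (Theorem~\ref{existence-theorem}) and $\lim_{a\to0}b(a)=\pi/2$ (Theorem~\ref{pi-over-two-theorem}), we already know $b(a)\ge\pi/2$ for every $a$; and by the cutoff \eqref{cut-off}, the equality $b(a)=\pi/2$ is the same as $\alpha(a,\pi/2)=3a$, which by \eqref{alpha-3a} together with Proposition~\ref{first-existence-proposition}\,\eqref{other-minimal-item} means that $N_{a,\pi/2}\times[0,\infty)$ is the \emph{only} finite-area minimal surface with boundary $\Gamma_{a,\pi/2}$. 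So it suffices to assume $b(a)=\pi/2$ and reach a contradiction.

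With this assumption, Theorem~\ref{existence-theorem} provides the translator $M_a$, with $M_a\cap\{y>0\}=\graph(u_a)$ for $u_a:\RR\times(0,\pi/2)\to\RR$ satisfying $u_a=+\infty$ on $\{0<x<a\}\times\{0\}$ and $u_a=-\infty$ on $(\{-a<x<0\}\times\{0\})\cup\{y=\pi/2\}$. I would rule this out by sliding a grim reaper surface against $M_a$ and applying the strong maximum principle, in the spirit of Corollary~\ref{pi-corollary}. There, for width $\pi$, the barrier is the grim reaper $z=\log\sin y$ over the full strip $(0,\pi)$, which goes to $-\infty$ on \emph{both} edges; for width $\pi/2$ this exact device is unavailable, since no grim reaper profile over an interval of length $\pi/2$ tends to $-\infty$ at both ends — which is exactly what makes the present statement nontrivial. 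Instead I would compare $M_a$, on a sub-slab $\{\beta<y<\pi/2\}$ chosen (using the curvature estimate of Proposition~\ref{curvature-bound}) so that $u_a$ is bounded above there, with a vertically translated grim reaper $G_s+\lambda\ee_3$, where $G_s=\{z=\log\cos(y-s)\}$ for a suitable small $s\in(0,\pi/2)$ is a bounded graph over all of $[0,\pi/2]$. On $\{\beta<y<\pi/2\}$, $u_a$ runs from a finite value at $y=\beta$ down to $-\infty$ at $y=\pi/2$; lowering $\lambda$ to the first value at which $G_s+\lambda\ee_3$ touches $\graph(u_a)$, the contact cannot occur near $y=\pi/2$ (there $u_a$ has already dropped below the bounded surface $G_s+\lambda\ee_3$), so it is either interior or on the edge $\{y=\beta\}$. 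An interior touching would, by the strong maximum principle and unique continuation, force a component of $M_a$ to be a piece of a grim reaper surface — impossible, since $M_a$ contains the vertical lines $\{(na,0)\}\times\RR$ and has curvature of both signs (Corollary~\ref{mixed-curvature}), whereas a grim reaper surface is flat and contains no vertical line.

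The main obstacle — the real content of the proof — is to rule out contact on the artificial edge $\{y=\beta\}$, i.e., to set up the comparison so that only an interior touching is possible; equivalently, writing $h=u_a-(\log\cos(y-s)+\lambda)$, which solves a uniformly elliptic linear equation on the open set $\{h<0\}$ and vanishes on the part of its boundary interior to the strip, one must show $\{h<0\}$ is nonempty with boundary forcing $h\equiv0$. This is where the hypothesis $b(a)=\pi/2$ enters: $\pi/2$ is precisely the width for which the grim reaper geometry leaves no room for a translated barrier to slide past $M_a$ near $y=\pi/2$ (for $b(a)>\pi/2$ the graph $u_a$ would be finite at $y=\pi/2$ and the edge contact could not be excluded), and making this quantitative is the crux. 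An alternative route is to produce directly a finite-area minimal surface bounded by $\Gamma_{a,\pi/2}$ and distinct from $N_{a,\pi/2}\times[0,\infty)$ — for instance a suitably vertically renormalized limit of the graphs $M_{a,b}$ as $b\uparrow\pi/2$ — since by Proposition~\ref{first-existence-proposition}\,\eqref{other-minimal-item} its existence already forces $\alpha(a,\pi/2)<3a$.
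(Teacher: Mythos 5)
Your reduction is correct: by \eqref{cut-off} the statement is equivalent to $\alpha(a,\pi/2)<3a$, so it suffices to rule out $b(a)=\pi/2$ or, better, to exhibit area strictly below $3a$ at width $\pi/2$. But the argument you give does not close, and the gap is exactly the one you name yourself. In the sliding comparison of $\graph(u_a)$ with $G_s+\lambda\ee_3$ on the sub-slab $\{\beta<y<\pi/2\}$, nothing excludes first contact on the artificial edge $\{y=\beta\}$, where both functions are finite and you have imposed no ordering; and since (as you observe) no grim reaper profile over an interval of length $\pi/2$ tends to $-\infty$ at both ends, there is no choice of barrier dominated by $u_a$ on the whole boundary of the sub-slab. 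The maximum principle alone cannot distinguish width $\pi/2$ from smaller widths here, so the ``crux'' you defer is the entire proof, not a technical refinement. Your fallback is circular: under the hypothesis $b(a)=\pi/2$ one has $\alpha(a,\pi/2)=3a$, and then by \eqref{alpha-3a} and Proposition~\ref{first-existence-proposition}\,\eqref{other-minimal-item} the \emph{only} finite-area minimal surface with boundary $\Gamma_{a,\pi/2}$ is $N_{a,\pi/2}\times[0,\infty)$; concretely, the unrenormalized limit of the minimizers $M_{a,b}$ as $b\uparrow b(a)$ is $N_{a,b(a)}\times[0,\infty)$ (this is used in the proof of Theorem~\ref{existence-theorem}), while the vertically renormalized limit is $M_a$ itself, which has neither boundary $\Gamma_{a,\pi/2}$ nor finite area. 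So no limiting procedure of minimal surfaces can supply the ``distinct minimal surface'' your alternative route requires.

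What is missing is a non-minimal competitor of area strictly less than $3a$ with boundary $\Gamma_{a,\pi/2}$, and this is what the paper constructs. Writing $I\otimes J$ for the rectangle $I\times\{0\}\times J$ in the plane $y=0$: for $b<\pi/2$, the portion $G(b)$ of the grim reaper $z=\log(\cos y)+c_b$, $c_b=-\log(\cos b)$, lying over $\{|x|\le a,\ y\ge 0,\ z\ge 0\}$ has area $<2a$, and $G(b)$ together with suitable flat pieces in $\{y=0\}$ of total area $\area([0,a]\otimes[0,\infty))$ bounds $\Gamma_{a,b}$ but has a corner, hence is not minimizing; this yields the \emph{strict} bound $\alpha(a,b)<\area(G(b))+\area([0,a]\otimes[0,\infty))$. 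Note that this naive competitor degenerates as $b\uparrow\pi/2$ (its area tends to $3a$), which is why one cannot simply pass to the limit. The paper instead banks the strict gap at the fixed width $b=1$, uses continuity of $\alpha$ (Proposition~\ref{alpha-bounds}) to keep it at width $1+\eps$, and then splices: a least-area surface $S$ for $\Gamma_{a,1+\eps}$ translated up by $\tau\ee_3$, the rectangle $[-a,0]\otimes[0,\tau]$, and the portion $\{z\le\tau\}$ of $G(\pi/2-\eps)$ shifted by $\eps\,\ee_2$ fit together into a piecewise smooth surface with boundary $\Gamma_{a,\pi/2}$ of area less than $\area(G(\pi/2-\eps))+\area([0,a]\otimes[0,\infty))<3a$. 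Some device of this kind --- a definite quantitative surplus created away from the endpoint width and transported to $b=\pi/2$ --- is what your outline lacks, and without it neither the barrier argument nor the limiting argument can be completed.
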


\begin{proof}
By definition of $b(a)$ (see Theorem~\ref{existence-theorem}),
 the assertion is equivalent to the assertion that $\alpha(a, \pi/2)<3a$.
For $0<b<\pi/2$, 
consider the grim reaper surface 
\begin{equation}\label{reaper}
z=\log(\cos y)+c_b,
\quad
\text{where $c_b= - \log(\cos b)$.}
\end{equation}
Note that $c_b>0$.
The surface intersects the plane $y=0$ in the line $z=c_b$ and it intersects
the plane $z=0$ in the lines $y=\pm b$.
Let $G(b)$ be the portion of the surface in the region 
\[
 \{ -a\le x \le a\} \cap \{y\ge 0\} \cap \{z\ge 0\}.
\]
We can regard $G(b)$ as a surface in $K_{a,b}\times\RR$ (or in $K_{a,b'}\times\RR$ if $b'\ge b$.)
By Lemma~\ref{prelim-lemma},
\begin{equation}\label{less-than-2a}
  \area(G(b)) <  2a.
\end{equation}
(If~\eqref{less-than-2a} is not clear,
 note that $\nu\cdot \vv< 1$ on the portion of the boundary where $y=b$ and that $\nu\cdot\vv=0$
on the rest of the boundary.)

If $I$ and $J$ are intervals, we let $I\otimes J$ denote 
the rectangle
$
    I \times \{0\} \times J
$
in the plane $y=0$.

We claim that
\begin{equation}\label{first-comparison}
\alpha(a,b) < \area(G(b)) + \area([0,a] \otimes [0,\infty))
\end{equation}
for $0<b<\pi/2$.
(We remark that~\eqref{first-comparison} together with~\eqref{less-than-2a}
already establishes that $\alpha(a,b)<3a$ for $b<\pi/2$.)

We prove~\eqref{first-comparison} as follows.
Let $\Sigma(b)$ be the union
of the rectangle
\[
     [-a,0] \otimes [0,c_b]
\]
and the strip
\[
    [0,a]  \otimes  [c_b,\infty)
\]
(where $c_b$ is as in~\eqref{reaper}).
Clearly
\[
  \area(\Sigma(b)) = \area([0,a] \otimes  [0,\infty)).
\]
Note that $G(b)\cup \Sigma(b)$ has
boundary $\Gamma_{a,b}$.  Since $G(b)\cup \Sigma(b)$ is not smooth (but only
piecewise smooth), it is not an area-minimizing surface.  Thus
\begin{align*}
\alpha(a,b)
&<
\area(G(b)\cup \Sigma(b))  \\
&=
 \area(G(b)) + \area([0,a]  \otimes [0,\infty)).
\end{align*}
Thus we have proved~\eqref{first-comparison}.

As a special case of~\eqref{first-comparison}, we have
\begin{equation*}
\alpha(a,1) < \area(G(1)) + \area([0,a] \otimes [0,\infty)).
\end{equation*}
Thus by continuity of the function $\alpha$ (Proposition~\ref{alpha-bounds}),
\begin{equation}\label{improved-comparison}
\alpha(a,b) < \area(G(1)) + \area([0,a] \otimes [0,\infty)).
\end{equation}
for all $b$ sufficiently close to $1$.  

Now fix a $b$ slightly larger than $1$ for which~\eqref{improved-comparison} holds.
Let $S$ be a least-area surface with boundary $\Gamma(a,b)$.
Then
\begin{equation}\label{tophat}
 \area(S) =\alpha(a,b) < \area(G(1)) + \area([0,a] \otimes [0,\infty)).
\end{equation}
Let $\eps = b - 1$ and let $b' = \pi/2 - \eps$.

Let $\tau$ be the height at which 
\[
  \text{$G(b')\cap \{z=\tau\}$ is the line $\{y=b\}\cap \{z=\tau\}$.}
\]

Shifting the surfaces in~\eqref{tophat} by $\tau \,\ee_3$ gives
\begin{align*}
\area(S+\tau \, \ee_3) 
&< \area(G(1)+ \tau \, \ee_3) + \area( [0,a] \otimes [\tau,\infty)) \\
&= \area(G(b')\cap \{z\ge \tau \}) + \area([0,a] \otimes [\tau,\infty)).
\end{align*}

Now consider the surface 
\[
  M:= (S + \tau \, \ee_3) \cup ([-a,0] \otimes [0,\tau]) \cup (G(b')\cap\{z \le \tau \} + \eps \,  \ee_2).
\]   
Note that $M$  is a piecewise smooth surface with boundary $\Gamma(a,\pi/2)$.

Thus
\begin{align*}
\alpha(a,\pi/2)
&\le
\area(M) \\
&=
\area(S+ \tau \, \ee_3) 
\\ 
&\qquad + \area([-a,0] \otimes  [0,\tau]) 
 + \area(G(b')\cap\{z \le \tau \})
\\
&<
\area(G(b')\cap\{z\ge \tau\})  + \area([0,a]\otimes[\tau,\infty)) 
\\
&\qquad + \area([-a,0]\otimes[0,\tau]) + \area(G(b')\cap\{ z \le \tau\})  \\
&= \area(G(b')) + \area([0,a] \otimes  [0,\infty))  \\
&< 3a
\end{align*}

by~\eqref{less-than-2a}.
\end{proof}

\begin{theorem}\label{upper-sharp}
For every $a$, $b(a)<\pi$.
\end{theorem}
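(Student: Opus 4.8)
The plan is to argue by contradiction. Since $\alpha(a,\pi)=3a$ by Corollary~\ref{pi-corollary}, we have $b(a)\le\pi$, so assume $b(a)=\pi$. Then Theorem~\ref{existence-theorem} produces a $(2a,0,0)$-periodic translator $M_a=\graph(u_a)$ over the strip $\RR\times(0,\pi)$, with $u_a=+\infty$ on $P$ (the segment $\{0<x<a\}$ of $\{y=0\}$ and its translates), $u_a=-\infty$ on the complementary portion of $\{y=0\}$, and $u_a=-\infty$ on $\{y=\pi\}$. I would compare $u_a$ with the family of grim reaper graphs $g_c(x,y):=\log\sin y + c$, $c\in\RR$, each a solution of the translator equation over the same strip. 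Sliding $c$ from $-\infty$ upward, the goal is a first value $c=c_0$ for which $\graph(g_{c_0})$ touches $\graph(u_a)$; at such a contact $\graph(u_a)$ lies above $\graph(g_{c_0})$, so the strong maximum principle (interior or boundary version) forces $u_a\equiv g_{c_0}$ --- impossible, since $g_{c_0}$ is finite on the open strip while $u_a=+\infty$ on $P$. This contradiction gives $b(a)<\pi$.

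To make the sliding work I first record that $w_c:=u_a-g_c$ satisfies a homogeneous linear elliptic equation on $\RR\times(0,\pi)$ (it is a difference of two solutions of the $z$-translation-invariant translator equation, so there is no zeroth-order term) and is $2a$-periodic in $x$. The key analytic input is the behavior of $M_a$ near its two planar ends, namely $\{y=\pi\}$ and the negative part of $\{y=0\}$, where $M_a$ is asymptotic as $z\to-\infty$ to the vertical translator-planes that make up $N_{a,\pi}\times\RR$. Using the curvature estimate (Theorem~\ref{main-curvature-estimate}) and the end-asymptotics of such graphical translators (as developed in~\cite{scherkon}), one has near each such end an expansion of the signed distance to the asymptotic plane of the form $\sim\phi\,e^{z}$ with $\phi>0$ \emph{constant}, because on the quotient cylinder $\RR/2a\ZZ$ the slowest-decaying solution of the linearized operator $\partial_x^2+\partial_z^2-\partial_z$ is the $x$-independent mode $e^z$. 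Hence $u_a-\log\sin y$ extends continuously and as a constant along each end, while tending to $+\infty$ along $P$. It follows that for $c$ sufficiently negative, $w_c>0$ in a neighborhood of the entire boundary of the strip; since $w_c$ cannot attain an interior minimum $\le 0$ without being constant, $w_c>0$ on all of $\RR\times(0,\pi)$. Evaluating at any interior point shows $c_0:=\sup\{c:\graph(g_c)<\graph(u_a)\}$ is finite, and $u_a\ge g_{c_0}$ with $\inf(u_a-g_{c_0})=0$.

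The remaining point --- and the main obstacle --- is to show that the infimum $0$ is attained in a way that triggers the maximum principle. If it is attained at an interior point of the strip, we are done as above. Otherwise, by periodicity in $x$ and the fact that $u_a-\log\sin y\to+\infty$ along $P$, it must be attained asymptotically along one of the ends, say $\{y=\pi\}$, with $u_a-\log\sin y\to c_0$ there. Passing to the graph-over-a-vertical-plane picture near that end --- writing $M_a$ as $y=\pi-w_a(x,z)$ and $\graph(g_{c_0})$ as $y=\pi-w_{c_0}(z)$ --- the inequality $u_a\ge g_{c_0}$ becomes $w_{c_0}-w_a\ge0$, while the matching of the constant leading coefficients forced by $u_a-\log\sin y\to c_0$ gives $w_{c_0}-w_a=o(e^z)$ as $z\to-\infty$. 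A non-negative solution of the linear elliptic equation satisfied by $w_{c_0}-w_a$ that decays strictly faster than the slowest (constant) mode must vanish identically (a Harnack-type statement at the end; concretely, its $x$-average satisfies an ODE all of whose solutions with this decay are zero, and non-negativity then kills the rest). Hence $M_a$ coincides with $\graph(g_{c_0})$ on the whole end, and unique continuation gives $M_a\cap\{y>0\}=\graph(g_{c_0})$, contradicting $u_a=+\infty$ on $P$; the analogous argument rules out contact along the $\{y=0\}$ end. This completes the proof that $b(a)<\pi$.
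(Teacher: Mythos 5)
Your overall sliding strategy (compare $u_a$ on the width-$\pi$ strip with the grim reapers $g_c=\log\sin y+c$, find a first contact, invoke the strong maximum principle) is legitimate in outline, and the preliminary steps are fine: the difference $u_a-g_c$ does satisfy a linear elliptic equation with no zeroth-order term, and periodicity reduces everything to a fundamental domain. The genuine gap is exactly at the step you call the main obstacle: ruling out contact at infinity. Your argument rests on (i) an asymptotic expansion of the trident's end of the form $\pi-y=\phi\,e^{z}+o(e^{z})$ with $\phi$ a \emph{positive constant}, uniform in $x$, and (ii) a Liouville-type rigidity statement: a nonnegative solution of the linearized equation on the end which is $o(e^{z})$ must vanish. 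Neither is proved. You cannot simply quote \cite{scherkon} for (i) in the form you need (constant, strictly positive leading coefficient with an $o(e^{z})$ remainder); establishing it requires the machinery of asymptotic expansions on cylindrical ends, and you must also handle the possibility $\phi=0$ (harmless, but it is a case, not something to assert away). For (ii), your ``take the $x$-average and solve an ODE'' argument is valid only for the constant-coefficient model $\partial_x^2+\partial_z^2-\partial_z$; the equation actually satisfied by $w_{c_0}-w_a$ has variable coefficients that merely converge to the model ones as $z\to-\infty$, so averaging does not produce an ODE, and the rigidity needs a real proof (improvement-of-decay/Fourier-mode iteration or a barrier argument). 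A further incorrect assertion, though a favorable one: the same expansion cannot hold at the lower end $\{y=0,\,-a<x<0\}$, because the vertical lines at $x=0,-a$ impose Dirichlet conditions that kill the $x$-independent mode, so there the decay is $e^{\lambda z}$ with $\lambda=\bigl(1+\sqrt{1+4\pi^2/a^2}\,\bigr)/2>1$ and a non-constant profile; this makes $u_a-\log\sin y\to+\infty$ along that end (which helps you), but it shows the end analysis cannot be waved through, and the behavior of $u_a-g_c$ near the corners where the lines meet $\{y=0\}$ is likewise asserted rather than checked.

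For comparison, the paper sidesteps all asymptotic analysis by reducing to one dimension: by the reflection symmetry (Corollary~\ref{invariance-corollary}) and the convexity property $u_{xx}(-a/2,y)>0$ (Theorem~\ref{property-theorem}), the translator equation restricted to the symmetry line gives that $w(y):=u_a(-a/2,y)$ satisfies $w''+(w')^2+1<0$, i.e., $w$ is a strict supersolution of the one-dimensional translator equation tending to $-\infty$ at both endpoints of $(0,b(a))$; sliding the planar grim reaper curve (Lemma~\ref{super}) then forces $b(a)<\pi$. If you want to rescue your approach without developing end asymptotics, restricting your comparison to that symmetry line is the natural fix --- and it is essentially the paper's proof.
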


\begin{proof}
Fix an $a>0$.  Let $u=u_a$, $b=b(a)$, and
\begin{align*}
&w: (0,b)\to\RR, \\
&w(y) = u(-a/2,y).
\end{align*}
Consider the translator equation
\begin{equation*}
  \Delta u -  \frac{\D_iu\,\D_ju}{1+|\D u|^2}\D_{ij}u = -1.
\end{equation*}
or, equivalently,
\begin{equation}\label{translator-equation}
 (1+u_y^2)u_{xx} + (1+u_x^2)u_{yy} - 2u_xu_yu_{xy} = -(1 + |\D u|^2).
\end{equation}
On the line $x=(-a/2)$, we have $u_x=0$, $u_{xy}=0$, and $u_{xx}>0$
 (by Theorem~\ref{property-theorem}), so
\[
   u_{yy}  < - (1 + u_y^2) \quad \text{for $x=(-a/2)$.}
\]
Thus
\[
    w'' + (w')^2 + 1 < 0.
\]
In other words, $w$ is a strict supersolution of the translator equation.
Theorem~\ref{upper-sharp} now follows from Lemma~\ref{super} below.
\end{proof}

\begin{lemma}\label{super}
Suppose that $w: (c,d)\to\RR$ is a strict supersolution of the translator equation
and that $\lim_{x\to c}w(x)=\lim_{x\to d}w(x)= -\infty$.
Then $c-d< \pi$.
\end{lemma}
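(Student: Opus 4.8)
The plan is to reduce this one-dimensional statement to an elementary first-order differential inequality, using as a model the fact that the one-dimensional grim reaper $x\mapsto\log\cos x$ solves $w''+(w')^2+1=0$ on an interval of length exactly $\pi$, so the bound is sharp. First I would observe that the strict supersolution inequality $w''+(w')^2+1<0$ says precisely that
\[
  \frac{d}{dx}\arctan\bigl(w'(x)\bigr)=\frac{w''(x)}{1+w'(x)^2}<-1,
\]
so that $\phi(x):=\arctan\bigl(w'(x)\bigr)+x$ is strictly decreasing on $(c,d)$. Note also that $w''<-1$, so $w$ is strictly concave and $w'$ is strictly decreasing.

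Next I would pin down the boundary behaviour of $w'$. Fix an interior point $x_0$. For $x\in(c,x_0)$, concavity gives $w'(x)\ge \dfrac{w(x_0)-w(x)}{x_0-x}$, and since $w(x)\to-\infty$ while $x_0-x\to x_0-c>0$, the right-hand side tends to $+\infty$; hence $\lim_{x\to c^+}w'(x)=+\infty$. Symmetrically, for $x\in(x_0,d)$ concavity gives $w'(x)\le\dfrac{w(x)-w(x_0)}{x-x_0}\to-\infty$, so $\lim_{x\to d^-}w'(x)=-\infty$. Therefore $\lim_{x\to c^+}\phi(x)=c+\pi/2$ and $\lim_{x\to d^-}\phi(x)=d-\pi/2$.

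Finally I would conclude as follows. A strictly decreasing function on an open interval attains neither its supremum (which equals its limit at the left endpoint, here $c+\pi/2$) nor its infimum (its limit at the right endpoint, here $d-\pi/2$), and its nonempty range lies in the open interval strictly between them; hence $d-\pi/2<c+\pi/2$, that is, $d-c<\pi$. The only step needing any care is the boundary asymptotics $w'\to\pm\infty$, but this is a soft consequence of concavity together with the prescribed infinite boundary values — no quantitative blow-up rate is required, and strictness of the final inequality is automatic because $\phi$ is \emph{strictly} monotone. So I expect no real obstacle here; the argument is essentially bookkeeping.
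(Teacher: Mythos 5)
Your argument is correct, but it takes a genuinely different route from the paper. The paper normalizes $w$ so that its maximum is $0$ at $x=0$, interprets strict supersolution as saying that the graph of $w$ has geodesic curvature (for the translator metric) pointing downward while the grim reaper curve $x\mapsto\log(\cos x)$ is a geodesic, and then slides half of that grim reaper curve horizontally until a last point of contact, deriving a contradiction with the maximum principle if $c\le-\pi/2$ (and symmetrically $d\ge\pi/2$). You instead observe that the inequality $w''+(w')^2+1<0$ makes the Pr\"ufer-type angle $\phi(x)=\arctan\bigl(w'(x)\bigr)+x$ strictly decreasing, compute the endpoint limits of $\phi$ from the concavity-forced blow-up $w'\to+\infty$ at $c$ and $w'\to-\infty$ at $d$, and read off $d-c<\pi$ from strict monotonicity; this is a purely elementary calculus argument with no barrier or maximum principle, and it makes the sharpness against the grim reaper (where $\phi$ is constant) transparent. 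The paper's sliding argument, by contrast, is the same comparison technique used elsewhere in the paper (e.g.\ Lemmas 3.3 and 3.4) and is the version that generalizes beyond the ODE setting. One small point to make explicit: your endpoint asymptotics for $w'$ use that $c$ and $d$ are finite; if you want the lemma without that tacit assumption, note first that since $\arctan(w')\in(-\pi/2,\pi/2)$ and $\phi$ is strictly decreasing, any two points of $(c,d)$ are at distance less than $\pi$, so the interval is automatically bounded and your boundary analysis then applies verbatim.
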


\begin{proof}
By translating, we may suppose that $w$ attains its maximum at $0$ and that $w(0)=0$.
Since $w$ is concave downward, we see that $w| (c,0]$ is strictly increasing.
Let $W$ be the graph of $w|(c,0]$:
\[
   W:= \{ (x, w(x)): c< x\le 0\}.
\]
Let $\Gamma$ be the left half of the standard grim reaper curve:
\[
  \Gamma:= \{ (x, \log(\cos x)): -\pi/2<x\le 0\}.
\]

The assertion that $w$ is a a strict supersolution of the translator equation
is equivalent to the assertion that the geodesic curvature of $W$ with respect
the translator metric is a positive multiple of the downward pointing unit normal.
   Of course $\Gamma$ is a geodesic for
the translator metric.

Thus near $(0,0)$, the curve $W$ lies between $\Gamma$ and the line $x=0$.

Suppose $c\le -\pi/2$.  Then there would be a maximum $s>0$ such that $\Gamma+(s,0)$
intersects $W$, and the point of contact would violate the maximum principle, a contradiction.

Thus $-\pi/2< c$.  
Likewise, $d< \pi/2$.
\end{proof}

\section{Semigraphical Translators}\label{semigraphical-section}

A translator is $M$ is called {\bf semigraphical} if 
\begin{enumerate}[\upshape (1)]
\item $M$ is a smooth, connected, properly embedded submanifold (without boundary) in $\RR^3$.
\item $M$ contains a nonempty, discrete collection of vertical lines.
\item $M\setminus L$ is a graph, where $L$ is the union of the vertical lines in $M$.
\end{enumerate}

Suppose $M$ is a semigraphical translator.  We may suppose without loss of generality 
that $M$ contains the $z$-axis $Z$.
Note that $M$ is invariant under $180^\circ$ rotation about each line in $L$,
from which it follows that $L\cap\{z=0\}$ is an additive subgroup of $\RR^2$.
The curvature estimates imply that $M-(0,0,\lambda)$ converges smoothly (perhaps
after passing to a subsequence) to an embedded translator $M_\infty$. 
Note that the limit translator cannot have any point where the tangent plane is non-vertical.
Thus $M_\infty$ is a union of one or more parallel vertical planes.
Likewise $M_{-\infty}$ (the limit as $\lambda\to -\infty)$ is the union of one or more parallel 
vertical planes.
Hence if $\Sigma$ is a connected component of $M\setminus L$, then $\Sigma$
is the graph of a function $u:\Omega\to \RR$, where $\Omega$ is one
of the components of 
\[
      \RR^2 \setminus \Pi(M_\infty\cup M_{-\infty}).
\]
Here $\Pi$ is the projection $\Pi(x,y,z)=(x,y)$.
Note that such an $\Omega$ (i.e., a component of $\RR^2$ minus two families of parallel lines)
must be one of the following (after a rigid motion of $\RR^2$):
\begin{enumerate}
\item A parallelogram.  Such translators are called ``Scherk translators" and were
completely classified in~\cite{scherkon}.
\item A semi-infinite parallelogram, i.e., a set of the form $\{(x,y): 0<y<w, \, x>my\}$ for some $m\ne 0$.
 Such translators are called ``Scherkenoids" and were completely classified in~\cite{scherkon}.
 In particular, for each $m$, there exists such a surface if and only if $w\ge \pi$, 
 and it is unique up to vertical translation.
\item An infinite strip $\RR\times (0,b)$ for some $b<\infty$.  There are three subcases, which we discuss below.
\item A wedge, i.e., a set of the form $\{(r\cos\theta,r\sin\theta): r>0, \, 0< \theta< \alpha\}$
for some $\alpha$ with $0<\alpha< \pi$.
      This case cannot occur; see Lemma~\ref{wedge-lemma} below.    
\item A halfplane.  In this case, $M$ contains only one vertical line by Remark~\ref{periodic-remark}
and Lemma~\ref{one-line-lemma} below.
     We conjecture that this case cannot occur.
\end{enumerate}
\begin{figure}[h]

\label{semigraphical-figure}
{\includegraphics[width=3.8cm]{scherk-translator.png}}
{\includegraphics[width=4cm]{scherkenoid-2.png}}
{\includegraphics[width=4cm]{q-2.png}}
{\includegraphics[width=3.8cm]{helicoid-1.png}}
{\includegraphics[width=4cm]{pitchfork-pi.png}}
\caption{Semigraphical Translators.
Row 1 (left to right): Scherk translator, Scherkenoid, and Trident.
Row 2: Helicoid-like Translator and Pitchfork.}
\label{sequence2}
\end{figure}

\begin{remark}\label{periodic-remark}
Suppose $\Omega=\RR\times (0,b)$ for some $0<b\le\infty$ (so $\Omega$  is a strip or a halfplane.)
Let $S$ be the set of points $p$ in $\partial\Omega$ such that $M$ contains the vertical line $\{p\}\times \RR$.
If the $x$-axis contains a second point $(a,0)$ in $S$ (in addition to the origin),
then $M$ is periodic with period $(2a,0,0)$ and thus the $x$-axis contains infinitely many points of $S$.
\end{remark}

Now we discuss the case of a strip, i.e., the case when $\Omega=\RR\times (0,b)$ for some $0<b<\infty$.
Let $S$ be as in Remark~\ref{periodic-remark}.
There are three subcases, according to whether $S$ has exactly $1$ point, exactly $2$ points, or more
than $2$ points.

If $\Omega=\RR\times(0,b)$ and $S$ has exactly one point (namely the origin), $M$ is called a {\bf pitchfork}.
In this case, $u(\cdot,b)= -\infty$, and $u$ is $+\infty$ on one component of $X\setminus\{(0,0)\}$
and $-\infty$ on the other component.
According to Theorems~10.1 and~12.1 of~\cite{scherkon},
  a pitchfork with $\Omega=\RR\times (0,b)$ exists if and only if $b\ge \pi$.
We conjecture that for each $b\ge \pi$, the pitchfork is unique up to rigid motions.

If $\Omega=\RR\times(0,b)$  and $S$ has exactly two points, then by Remark~\ref{periodic-remark},
one point (the origin)
 is on the line $y=0$
and the other point is on the line $y=b$.  
In this case, such a translator is called {\bf helicoid-like}.
Helicoid-like translators are described in~\cite{scherkon}, where it is proved that a helicoid-like translator
with $\Omega=\RR\times (0,b)$ exists if and only if $b<\pi$.
(We do not know whether, given $b$, the translator is unique up to rigid motion.)

Now suppose that $\Omega=\RR\times (0,b)$ and that $S$ contains $3$ or more points.
Then $S$ must contain more than one point on one edge of $\Omega$, say on the edge $y=0$.  
Then by Remark~\ref{periodic-remark},
\[
   S\cap \{x=0\} = \{(na,0): n\in \ZZ\}
\]
for some $a>0$, and $M$ is periodic with period $(2a,0,0)$.

If $S$ also contained a point $(k,b)$ on the side $y=b$, then by the periodicity, 
it would contain $(k+na,b)$
for every $n$.  That cannot happen according to Lemma~\ref{divergence-lemma} below.

Thus if $\Omega=\RR\times(0,b)$ is a strip and if $S$ contains more then $2$ points,
then $S=\{(na,0): n\in \ZZ\}$ for some $a>0$.  In this case, $M$ is the trident described 
in Theorem~\ref{main-theorem}.

In summary, we have

\begin{theorem}\label{semigraphical-theorem}
If $M$ is a semigraphical translator in $\RR^3$, then it is one of the following:
\begin{enumerate}[\upshape (1)]
\item a (doubly-periodic) Scherk translator,
\item a (singly-periodic) Scherkenoid,
\item a (singly-periodic) helicoid-like translator,
\item a pitchfork, 
\item a (singly-periodic) trident, or
\item\label{halfplane-item} (after a rigid motion) a translator containing $Z$ such that $M\setminus Z$
  is a graph over $\{(x,y): y\ne 0\}$.
\end{enumerate}
\end{theorem}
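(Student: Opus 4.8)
The statement is the culmination of the case analysis carried out in the paragraphs above, so the plan is to organize that analysis into a proof. First I would normalize: after a rigid motion assume that $M$ contains the $z$-axis $Z$, and let $L$ denote the union of the vertical lines contained in $M$. Since a translator is invariant under $180^\circ$ rotation about each vertical line it contains, $L\cap\{z=0\}$ is an additive subgroup of $\RR^2$; in particular $L$ is invariant under the horizontal translations fixing that subgroup.

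The key geometric step is the limiting argument. Applying the curvature estimate (Theorem~\ref{main-curvature-estimate}) componentwise to the graph $M\setminus L$ and using properness, the translates $M-(0,0,\lambda)$ subconverge smoothly as $\lambda\to+\infty$ to a smooth properly embedded translator $M_\infty$; since $M_\infty$ is assembled from pieces whose height diverges, every tangent plane of $M_\infty$ is forced to be vertical, so $M_\infty$ is a union of one or more parallel vertical planes, and likewise $M_{-\infty}$ as $\lambda\to-\infty$. Each component $\Sigma$ of $M\setminus L$ is a graph trapped between the families of planes $M_\infty$ and $M_{-\infty}$, hence is the graph of a function over a connected component $\Omega$ of $\RR^2\setminus\Pi(M_\infty\cup M_{-\infty})$, where $\Pi(x,y,z)=(x,y)$. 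A component of the complement of two families of parallel lines is, after a rigid motion of $\RR^2$, one of the following: a parallelogram, a semi-infinite parallelogram $\{(x,y):0<y<w,\ x>my\}$, an infinite strip $\RR\times(0,b)$, a wedge, or a halfplane.

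It then remains to match each possibility to one of the six conclusions. Parallelogram domains give Scherk translators and semi-infinite parallelogram domains give Scherkenoids, both classified in~\cite{scherkon}. The wedge case is ruled out by Lemma~\ref{wedge-lemma}. If $\Omega$ is a halfplane we are in conclusion~\eqref{halfplane-item}: by Remark~\ref{periodic-remark} a second vertical line on the $x$-axis would make $M$ periodic and hence place infinitely many lines on the boundary of a halfplane, incompatible with $\Omega$ being a single component, and Lemma~\ref{one-line-lemma} then rules out any further line, so $M\setminus Z$ is a graph over $\{y\ne 0\}$. Finally, when $\Omega=\RR\times(0,b)$ is a strip, let $S$ be the set of $p\in\partial\Omega$ with $\{p\}\times\RR\subset M$ and argue by cases on the number of points of $S$: if $S$ has one point then $M$ is a pitchfork; if $S$ has two points, Remark~\ref{periodic-remark} forces them onto opposite edges of the strip, so $M$ is helicoid-like; and if $S$ has three or more points, two of them lie on the same edge, so by Remark~\ref{periodic-remark} $M$ is periodic with $S\cap\{x=0\}=\{(na,0):n\in\ZZ\}$, while Lemma~\ref{divergence-lemma} prevents $S$ from meeting the opposite edge, so $M$ is the trident of Theorem~\ref{main-theorem}.

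The substantive content is concentrated in the results this proof quotes: the exclusion of wedge domains (Lemma~\ref{wedge-lemma}), the divergence lemma forbidding vertical lines on both edges of a periodic strip domain (Lemma~\ref{divergence-lemma}), and the Scherk and Scherkenoid classifications imported from~\cite{scherkon}. Inside the present argument the point needing the most care is the limiting step: one must check that the subsequential limits $M_{\pm\infty}$ exist as smooth embedded translators, that they are genuinely unions of vertical planes, and that the domain of each graphical component of $M\setminus L$ is precisely one connected component of $\RR^2$ minus the two resulting families of parallel lines, so that the five-item enumeration of possible $\Omega$ is both correct and exhaustive.
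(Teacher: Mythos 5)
Your proposal is correct and follows essentially the same route as the paper: normalize so $Z\subset M$, use the curvature estimate to show $M_{\pm\infty}$ are unions of parallel vertical planes, enumerate the possible components $\Omega$ of $\RR^2\setminus\Pi(M_\infty\cup M_{-\infty})$, and dispose of the cases via the classification in~\cite{scherkon}, Lemma~\ref{wedge-lemma}, Remark~\ref{periodic-remark}, Lemma~\ref{divergence-lemma}, and Lemma~\ref{one-line-lemma}. The only slight wobble is in the halfplane case, where the correct reason (as in the paper) that a second line is impossible is that periodicity of a graph over a halfplane contradicts Lemma~\ref{one-line-lemma}, not any incompatibility with $\Omega$ being a single component; since you cite both Remark~\ref{periodic-remark} and Lemma~\ref{one-line-lemma}, the argument goes through as intended.
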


As mentioned above, we conjecture that Case~\eqref{halfplane-item} cannot
occur.

Of course pitchforks are not periodic, nor are the Case~\eqref{halfplane-item} examples (if they exist).

The proof of Theorem~\ref{semigraphical-theorem} was based on the following three
lemmas:

\begin{lemma}\label{wedge-lemma}
Let $\Omega=\{(r\cos\theta, r\sin\theta): r>0, \, 0<\theta < \beta\}$ where $0<\beta< \pi$.
There is no translator $\Sigma$ such that 
\begin{enumerate}
\item $\Sigma$ is a smooth, properly embedded manifold-with-boundary, the boundary
being $Z$, and
\item  $\Sigma\setminus Z$ is the graph a function $u:\Omega\to \RR$.
\end{enumerate}
\end{lemma}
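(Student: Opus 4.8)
The plan is to first pin down the asymptotics of $\Sigma$ and then derive a contradiction from $\beta<\pi$. Normalize so that $\Omega=\{(r\cos\theta,r\sin\theta):r>0,\ |\theta|<\beta/2\}$; since $\beta<\pi$ we have $\overline\Omega\setminus\{0\}\subset\{x>0\}$, so $\Sigma=Z\cup\graph(u)$ lies in $\{x\ge 0\}$ and meets the plane $\{x=0\}$ exactly in $Z=\partial\Sigma$. Along $Z$ the tangent plane of $\Sigma$ is vertical (it contains $\ee_3$, the direction of $Z$), so write it as the span of $\ee_3$ and $(\cos\theta(z),\sin\theta(z),0)$. First I would show, using the curvature estimate of Theorem~\ref{main-curvature-estimate} (which applies because a wedge is convex), that $\Sigma-(0,0,\lambda)$ subconverges smoothly as $\lambda\to\pm\infty$ to a translator all of whose tangent planes are vertical, hence to a union of parallel vertical planes together with one half-plane $H_{\pm}$ bounded by $Z$. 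Tracking the $\Pi$-images of the pieces of $\graph(u-\lambda)$ near $Z$ — these are limits of level curves of $u$ issuing from the vertex, hence rays in $\overline\Omega$ — forces $H_{\pm}$ to project onto an \emph{edge} of $\Omega$, with $u\to+\infty$ along that edge for the $+\infty$ limit and $u\to-\infty$ for the $-\infty$ limit. In particular $\theta(z)$ is monotone (because $\graph(u)$ is a graph, exactly as in the proof of Theorem~\ref{existence-theorem}), $\theta(-\infty)$ and $\theta(+\infty)$ are the directions of the two edges, and $\theta$ sweeps \emph{exactly} the angle $\beta$ as $z$ runs over $\RR$, one edge carrying the boundary value $+\infty$ and the other $-\infty$. (The same reasoning rules out both edges having the same boundary value, for then $\theta(+\infty)=\theta(-\infty)$ and $\Sigma$ would be tangent along $Z$ to a vertical half-plane, hence equal to it by the boundary maximum principle — absurd, since $\graph(u)$ projects onto a $2$-dimensional set.)

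It then remains to contradict $\beta<\pi$. The cleanest route I see mirrors the area arguments of Lemma~\ref{area-bound-lemma} and of the proof of Theorem~\ref{to-zero-theorem}. Since $\Omega$ is convex, $\graph(u)$ is $g$-area-minimizing in $\Omega\times\RR$, so for any nice open $W$ one has $\area_g(\Sigma\cap\overline W)\le\tfrac12\area_g(\partial W\cap(\overline\Omega\times\RR))$ as in Lemma~\ref{area-bound-lemma}. On the other hand, by the first-variation identity of Lemma~\ref{prelim-lemma} applied to the field $\vv=-\partial/\partial z$, whose flow scales $g$-area by $e^{t}$, one has $\area_g(\Sigma\cap\overline W)=\int_{\Sigma\cap\partial W}\vv\cdot\nn\,ds$, and the contribution of $Z$ vanishes since $\nn$ is horizontal there. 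Choosing $W$ adapted to the asymptotics found above — squeezed between the two asymptotic half-planes and truncated in $z$ — and letting the truncation tend to infinity, the two estimates should be reconcilable only if the angular sweep of $\theta$ along $Z$ is at least $\pi$; since we showed it equals $\beta<\pi$, this is the contradiction. Alternatively, one can attempt to slice $\Sigma$ by a vertical plane $\{\langle(x,y),\nu\rangle=c\}$ transverse to the wedge, combine sign information on the transverse second derivative (obtained as in Theorem~\ref{property-theorem}) with the ODE Lemma~\ref{super}, so that the arbitrarily long chords of the wedge would produce a strict supersolution of the translator equation on an interval of length $>\pi$ with limit $-\infty$ at both ends.

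The main obstacle is the second step: converting $\beta<\pi$ into a contradiction. The asymptotic analysis of the first paragraph is essentially routine given the curvature estimates, but the area bookkeeping is delicate because $\graph(u)$ runs to $z=+\infty$ over one edge and to $z=-\infty$ over the other, so the boundary integrals in Lemma~\ref{prelim-lemma} must be taken over carefully truncated regions and shown to converge; and the slicing alternative requires first establishing a one-sided monotonicity of $u$ across the wedge, for which we lack a symmetry argument as clean as Corollary~\ref{invariance-corollary}. I expect the correct proof to combine the sharp ``width $\pi$'' mechanism already visible in Corollary~\ref{pi-corollary} and Lemma~\ref{super} with the fact, established in the first step, that the angular sweep of the tangent plane of $\Sigma$ along $Z$ equals the wedge angle $\beta$.
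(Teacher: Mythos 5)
Your proposal is not a complete proof: you correctly acknowledge that the second step (converting $\beta<\pi$ into a contradiction) is missing, and neither of the two routes you sketch closes. The slicing route has a boundary-data mismatch: a chord of the wedge has its two endpoints on the two different edges of $\Omega$, one of which carries the boundary value $+\infty$ and the other $-\infty$ (by your own first-step analysis), so Lemma~\ref{super} (which needs $-\infty$ at \emph{both} ends) does not apply; and you have no analogue of Theorem~\ref{property-theorem} to furnish the sign on the transverse second derivative needed to make the slice a supersolution. The area route is vaguer: the identity from Lemma~\ref{prelim-lemma} and the bound from Lemma~\ref{area-bound-lemma} are both one-sided controls on $\area_g(\Sigma\cap\overline{W})$, and there is no obvious way the wedge angle $\beta$ enters a comparison between them in a manner that pins down a threshold at $\pi$.

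The paper's proof is entirely different and does not need your first step at all. It starts from the same observation you make --- that the vertical translates of $\Sigma\setminus Z$ foliate $\Omega\times\RR$ by translators, so $\Sigma$ is $g$-area-minimizing and $\area(\Sigma\cap W)\le\area\bigl((\partial W)\cap\{z>u\}\bigr)$ by Theorem~3.5 of \cite{white-size} --- but the conclusion drawn is that $\Sigma$ has \emph{finite entropy}. Finite entropy together with the boundary line $Z$ places the flow $t\mapsto\Sigma-(0,0,t)$ in the scope of Theorem~34 of \cite{white-boundaryMCF}, which identifies the tangent flow at infinity as a static, multiplicity-one halfplane. Huisken's monotonicity formula then forces $\Sigma$ itself to be a flat (hence vertical) halfplane, contradicting the fact that $\Sigma\setminus Z$ is a graph over a two-dimensional set. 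So the ingredient you are missing is not a cleverer domain $W$ or a sharper slice, but the machinery of tangent flows at infinity and Huisken monotonicity, which converts the entropy bound into rigidity; the asymptotic analysis in your first paragraph, while plausible, is bypassed entirely.
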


\begin{proof}
Suppose to the contrary that such an $\Sigma$ exists.
Then $\Sigma$ and its vertical translates foliate $\Omega\times\RR$.  Let $\mathbf{n}$ be the downward
pointing unit vectorfield on $\Omega\times \RR$ that is perpendicular to the foliation.  Since the mean curvature
is a positive multiple of $\mathbf{n}$, $\Div\mathbf{n}$ is negative.
Let $W\subset\Omega\times\RR$ be a region with smooth boundary that is
transverse to $\Sigma$.
  Let $W^+$ and $\partial^+W$ be the portions of $W$ and of $\partial W$ that lie
above $\Sigma$.  Let $\nu$ be the outward pointing unit normal on $\partial (W^+)$. 
Then 
\begin{equation}\label{divergence-area-bound}
\begin{aligned}
0 
&\ge \int_{W^+} \Div \mathbf{n} \\
&=  \int_{W\cap\Sigma} \mathbf{n}\cdot\nu + \int_{\partial^+W}\mathbf{n}\cdot\nu \\
&\ge \area(W\cap \Sigma) - \area(\partial^+W)
\end{aligned}
\end{equation}
since $\mathbf{n}=\nu$ on $\Sigma\cap \Omega$ and since $|\mathbf{n}\cdot\nu|\le 1$.
%Thus
%\begin{equation}\label{divergence-area-bound}
%   \area(W\cap\Sigma)\le \area(\partial^+W).
%\end{equation}
Since $W$ is arbitrary, the area bound
 $\area(W\cap\Sigma)\le \area(\partial^+W)$ from~\eqref{divergence-area-bound} implies
that $\Sigma$ has finite entropy.
By Theorem~34 of \cite{white-boundaryMCF}, the tangent flow at infinity to the flow
\[
    t\in \RR \mapsto \Sigma - (0,0,t)
\]
 is a static, multiplicity-one halfplane.
Thus by Huisken monotonicity, $\Sigma$ is a flat halfplane, a contradiction.
\end{proof}

\begin{lemma}\label{divergence-lemma}
There is no $(2a,0)$-periodic translator
$
  u: \RR\times (0,b)\to \RR
$
such that 
\[
u(x,0)=u(k+x,b) = 
\begin{cases}
-\infty &\text{for $-a<x<0$, and} \\
\infty &\text{for $0<x<a$}.
\end{cases}
\]
\end{lemma}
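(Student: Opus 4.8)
The plan is to argue by a flux (divergence) identity along the strip; the interesting point — and where the curvature estimate enters — is the behaviour of $u_y/W$ near the two edges, where $W:=\sqrt{1+|\D u|^2}$. I read the hypothesis as in the application to the classification: $u$ has the stated trident behaviour on the edge $y=0$ and, in addition, $M$ contains the vertical line $\{(k,b)\}\times\RR$ on the edge $y=b$ (the literal two‑tridents‑on‑$y=0$ reading is already contradicted by the boundary values when $k\notin 2a\ZZ$).

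Suppose such a $u$ exists. Dividing the translator equation~\eqref{translator-equation} by $W^3$ shows it is equivalent to $\ddiv(\D u/W)=-1/W$. For $0<c<b$ put
\[
   \Phi(c)=\int_0^{2a}\frac{u_y(x,c)}{W(x,c)}\,dx,
\]
which is finite and smooth in $c$ since $|u_y/W|<1$. First I would show that $\Phi$ is strictly decreasing: applying the divergence theorem to $\ddiv(\D u/W)=-1/W$ on the rectangle $(0,2a)\times(c_1,c_2)$ with $0<c_1<c_2<b$, the contributions of the two vertical sides cancel by $(2a,0)$‑periodicity, giving
\[
   \Phi(c_2)-\Phi(c_1)=-\int_{(0,2a)\times(c_1,c_2)}\frac1W\ <0 .
\]
Letting $c_1\to0^+$ and $c_2\to b^-$ (the right‑hand integral is $\le 2ab<\infty$) yields $\Phi(b^-)<\Phi(0^+)$.

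Next I would identify the two endpoint values from the boundary data. Along an arc of $\RR\times\{0\}$ on which $u\to+\infty$, the level sets of $u$ are asymptotically horizontal and $\D u$ points toward decreasing $y$, so $\D u/|\D u|\to(0,-1)$ and hence $u_y/W\to-1$ as $c\to0^+$; along an arc on which $u\to-\infty$, likewise $u_y/W\to+1$. Making this precise on compact subarcs that avoid the lattice points $na$ uses the curvature estimate in Theorem~\ref{main-curvature-estimate} (after subtracting a vertical constant the graph converges smoothly to the corresponding vertical cylinder), and the contribution of the shrinking neighbourhoods of the $na$ is negligible because $|u_y/W|<1$. Since the trident boundary values give a $+\infty$‑arc and a $-\infty$‑arc each of length $a$ per period, $\Phi(0^+)=(-1)a+(+1)a=0$. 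The same argument applies along $\RR\times\{b\}$: the vertical line $\{(k,b)\}\times\RR\subset M$, the $(2a,0)$‑periodicity, and the invariance of $M$ under $180^\circ$ rotation about each vertical line it contains force $\{(k+na,b)\}\times\RR\subset M$ for all $n\in\ZZ$, so the top edge also carries a trident pattern and $\Phi(b^-)=0$ as well. Thus $0=\Phi(0^+)>\Phi(b^-)=0$, a contradiction, proving the lemma.

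The routine part is the divergence identity and the monotonicity of $\Phi$. The main obstacle is the limit computation $\Phi(0^+)=\Phi(b^-)=0$: one must show that $u_y/W$ genuinely tends to $\mp1$ away from the finitely many vertical lines per period as one approaches an edge. This needs the curvature estimate of Theorem~\ref{main-curvature-estimate} together with a maximum‑principle comparison against translated grim reaper surfaces to prevent the graph from oscillating near the edge; granting that, the remainder is bookkeeping.
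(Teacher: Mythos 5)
Your proposal is essentially the paper's own argument: the paper applies the divergence theorem for $\ddiv(\D u/W)=-1/W$ once over a fundamental parallelogram (the lateral sides cancel by $(2a,0)$-periodicity, and each horizontal edge contributes zero net flux because the $+\infty$ and $-\infty$ arcs have equal length per period, using the same boundary asymptotics $u_y/W\to\mp 1$ that you justify via the curvature estimate), while you run the identical flux computation on interior rectangles via the monotone slice integral $\Phi(c)$ and pass to the limit --- a harmless, if slightly more careful, variant. One caveat: the second condition in the statement is a typo for $u(k+x,b)$, so the trident data on the top edge is part of the hypothesis; your detour through the vertical line at $(k,b)$ --- in particular the claim that periodicity and $180^\circ$ rotations force lines at $(k+na,b)$ for every $n$, which does not follow as stated since the period only gives spacing $2a$ --- is unnecessary, because all the flux argument needs is that the $+\infty$ and $-\infty$ arcs on $y=b$ have equal total length in each period.
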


\begin{proof}
Let $P$ be a fundamental parallelogram, e.g., the parallelogram
with corners $(0,0)$, $(2a,0)$, $(k,b)$, and $(2a+k,b)$.
Recall the translator equation
\begin{equation}\label{the-pde}
  \Div\xi = - (1 + |\D u|^2)^{-1/2}.
\end{equation}
where 
\[
   \xi = \frac{\D u}{\sqrt{1+|\D u|^2}}.
\]
By~\eqref{the-pde} and the divergence theorem,
\[
  \int_{\partial P} \xi\cdot\nn\,ds < 0,
\]
where $\nn$ is the outward pointing unit normal.
The integrals on the left and right sides of $P$ are equal and opposite and
so cancel each other out.  On the top and bottom edges of $P$,
the integrand is $1$ where $u=-\infty$ and $-1$ where $u=\infty$.
Thus the integral is $0$, a contradiction.

Note that because the vectorfield $\xi$ is bounded, the divergence theorem holds even though
there are isolated points (namely, the corners of the parallelogram) where $\xi$ is discontinuous.
\end{proof}

\begin{lemma}\label{one-line-lemma}
A translator $u:\{(x,y):y>0\} \to \RR$ cannot be periodic in the $x$-direction.
\end{lemma}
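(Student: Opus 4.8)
The plan is to restrict $u$ to a horizontal slab of width exactly $\pi$ and compare it with a vertically sliding grim reaper surface. The point is that on such a slab $u$ has \emph{finite} boundary values (it extends smoothly past the slab, since by hypothesis $u$ is defined on all of $\{y>0\}$), whereas a grim reaper plunges to $-\infty$ at the edges of a width-$\pi$ slab; periodicity in $x$ will convert this into an interior contact point, and then the strong maximum principle forces $u$ to coincide with a function that cannot be extended to $\{y>0\}$.

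In detail: since $\graph(u)$ is a translator, $u$ solves the translator equation and is smooth on the open set $\{y>0\}$. Suppose $u$ is periodic in $x$ with period $L$. Fix $y_0>0$, let $S=\RR\times(y_0,y_0+\pi)$, and let $w(x,y)=\log\sin(y-y_0)$, so that $\graph(w)$ over $S$ is a grim reaper surface and each vertical translate $\graph(w+c)$ is a translator. Set
\[
   d(x,y)=u(x,y)-w(x,y)\qquad\text{on }S.
\]
Then $d$ is smooth on $S$ and $L$-periodic in $x$. As $y\to y_0^+$ we have $w(x,y)\to-\infty$, while $u(x,y)\to u(x,y_0)$, which by continuity and $L$-periodicity is bounded below uniformly in $x$; hence $d\to+\infty$ uniformly in $x$ as $y\to y_0^+$, and likewise as $y\to(y_0+\pi)^-$. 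Therefore $d$ attains a finite minimum $m$ at some interior point $p_0\in S$.

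Now $u\ge w+m$ on $S$, with equality at $p_0$, so $\graph(u)$ lies above the translator $\graph(w+m)$ and touches it at the interior point $(p_0,u(p_0))$. By the strong maximum principle for the translator equation and unique continuation (exactly as in the proof of Corollary~\ref{pi-corollary} and of Lemma~\ref{tweedledee}), $u\equiv w+m$ on the connected set $S$. But $w+m=\log\sin(y-y_0)+m\to-\infty$ as $y\to y_0^+$, whereas $u$ is smooth on $\{y>0\}$ and so has the finite limit $u(x,y_0)$ as $y\to y_0^+$ --- a contradiction.

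The argument is short, and the only step that genuinely needs care is ensuring that the sliding grim reaper produces an \emph{interior} contact point rather than a contact "at the ends" of the slab; this is precisely where the periodicity hypothesis is used, since it makes the cross-sections $\{y=\text{const}\}$ of $\graph(u)$ compact and thus forces $u$ to be uniformly bounded below near the two edges $y=y_0$ and $y=y_0+\pi$ while $w$ diverges to $-\infty$ there. (Without periodicity one cannot preclude $\inf_x u(x,y_0)=-\infty$.) I also considered a flux argument --- integrating $\Div\!\bigl(\D u/\sqrt{1+|\D u|^2}\bigr)=-1/\sqrt{1+|\D u|^2}$ over $(0,L)\times(\eps,T)$, where periodicity cancels the two vertical edges and one deduces that $(1+|\D u|^2)^{-1/2}$ is integrable over the period half-strip --- but turning this integrability into a contradiction appears to require the barrier comparison above in any case, so I would use the grim reaper argument as the proof.
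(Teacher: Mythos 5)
Your proof is correct and is essentially the paper's own argument: both slide a grim reaper $\log\sin(y-y_0)$ over a width-$\pi$ substrip of $\{y>0\}$, use periodicity to get a uniform bound on $u$ at the strip edges so that the difference attains an interior extremum, and then invoke the strong maximum principle to force $u$ to coincide with the grim reaper, contradicting the fact that $u$ is finite and smooth across $y=y_0$. The paper merely states this in one line (using the strip $\RR\times(2\pi,3\pi)$ and the maximum of $\log\sin y - u$ rather than the minimum of $u-w$); you have filled in the same details.
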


\begin{proof}
Otherwise, 
$
  (x,y)\in \RR\times (2\pi,3\pi) \mapsto \log(\sin y) - u(x,y)
$
would attain its maximum, violating the strong maximum principle.
\end{proof}

\begin{bibdiv}
\begin{biblist}

\bib{qing-chen}{article}{
   author={Chen, Qing},
   title={On the total curvature and area growth of minimal surfaces in
   $\RR^n$},
   journal={Manuscripta Math.},
   volume={92},
   date={1997},
   number={2},
   pages={135--142},
   issn={0025-2611},
   review={\MR{1428644}},
   doi={10.1007/BF02678185},
}

\bib{fujimoto}{article}{
   author={Fujimoto, Hirotaka},
   title={On the number of exceptional values of the Gauss maps of minimal
   surfaces},
   journal={J. Math. Soc. Japan},
   volume={40},
   date={1988},
   number={2},
   pages={235--247},
   issn={0025-5645},
   review={\MR{930599}},
   doi={10.2969/jmsj/04020235},
}

\bib{hardt-simon}{article}{
   author={Hardt, Robert},
   author={Simon, Leon},
   title={Boundary regularity and embedded solutions for the oriented
   Plateau problem},
   journal={Ann. of Math. (2)},
   volume={110},
   date={1979},
   number={3},
   pages={439--486},
   issn={0003-486X},
   review={\MR{554379}},
   doi={10.2307/1971233},
}

\bib{himw}{article}{
author={Hoffman, David},
author={Ilmanen, Tom},
author={Martín, Francisco},
author={White, Brian},
title={Graphical Translators for Mean Curvature Flow},
journal={Calc. Var. Partial Differential Equations},
   volume={58},
   date={2019},
   number={4},
   pages={Art. 117, 29},
   issn={0944-2669},
   review={\MR{3962912}},
   doi={10.1007/s00526-019-1560-x},
   }
   
\bib{himw-survey}{article}{
   author={Hoffman, David},
   author={Ilmanen, Tom},
   author={Martín, Francisco},
   author={White, Brian},
   title={Notes on translating solitons for mean curvature flow},
   conference={
      title={Minimal surfaces: integrable systems and visualisation},
   },
   book={
      series={Springer Proc. Math. Stat.},
      volume={349},
      publisher={Springer, Cham},
   },
   date={2021},
 %  date={[2021] \copyright 2021},
   pages={147--168},
   review={\MR{4281668}},
   doi={10.1007/978-3-030-68541-6-9},
}

\bib{scherkon}{article}{
author={Hoffman, David},
author={Martín, Francisco},
author={White, Brian},
title={Scherk-like Translators for Mean Curvature Flow},
date={2019},
journal={J. Differential Geom. (forthcoming). Preprint arXiv:1903.04617},
}

\bib{ilmanen}{article}{
   author={Ilmanen, Tom},
   title={Elliptic regularization and partial regularity for motion by mean
   curvature},
   journal={Mem. Amer. Math. Soc.},
   volume={108},
   date={1994},
   number={520},
   pages={x+90},
   %issn={0065-9266},
   %review={\MR{1196160 (95d:49060)}},
   %doi={10.1090/memo/0520},
}

\bib{li-quadratic}{article}{
   author={Li, Peter},
   title={Complete surfaces of at most quadratic area growth},
   journal={Comment. Math. Helv.},
   volume={72},
   date={1997},
   number={1},
   pages={67--71},
   issn={0010-2571},
   review={\MR{1456316}},
   doi={10.1007/PL00000367},
}

\bib{nguyen-tridents}{article}{
   author={Nguyen, Xuan Hien},
   title={Translating tridents},
   journal={Comm. Partial Differential Equations},
   volume={34},
   date={2009},
   number={1-3},
   pages={257--280},
   issn={0360-5302},
   review={\MR{2512861}},
   doi={10.1080/03605300902768685},
}

\bib{osserman-book}{book}{
   author={Osserman, Robert},
   title={A survey of minimal surfaces},
   edition={2},
   publisher={Dover Publications, Inc., New York},
   date={1986},
   pages={vi+207},
   isbn={0-486-64998-9},
   review={\MR{852409}},
}

\bib{spruck-xiao}{article}{
   author={Spruck, Joel},
   author={Xiao, Ling},
   title={Complete translating solitons to the mean curvature flow in $\RR^3$ with nonnegative mean curvature},
   journal={Amer. J. Math.},
   volume={142},
   date={2020},
   number={3},
   pages={993--1015},
   issn={0002-9327},
   review={\MR{4101337}},
   doi={10.1353/ajm.2020.0023},
}

%   journal={American J. Math. (forthcoming). ArXiv:1703.01003v2},

\begin{comment}
\bib{white-size}{article}{
   author={White, Brian},
   title={The size of the singular set in mean curvature flow of mean-convex
   sets},
   journal={J. Amer. Math. Soc.},
   volume={13},
   date={2000},
   number={3},
   pages={665--695},
   issn={0894-0347},
   review={\MR{1758759}},
   doi={10.1090/S0894-0347-00-00338-6},
}
\end{comment}

\bib{white-parkcity}{article}{
   author={White, Brian},
   title={Introduction to minimal surface theory},
   conference={
      title={Geometric analysis},
   },
   book={
      series={IAS/Park City Math. Ser.},
      volume={22},
      publisher={Amer. Math. Soc., Providence, RI},
   },
   date={2016},
   pages={387--438},
   review={\MR{3524221}},
}

\bib{white-boundaryMCF}{article}{
   author={White, Brian},
   title={Mean Curvature Flow with Boundary},
   journal={Ars Inveniendi Analytica},
   note={arXiv:1901.03008 [math.DG]},
%   journal={},
%   volume={161},
   date={2021},
%   number={3},
%   pages={1487--1519},
%   issn={0003-486X},
%   review={\MR{2180405 (2006i:53100)}},
 %  doi={10.4007/annals.2005.161.1487},
}

\end{biblist}
\end{bibdiv}   
\end{document}